\newcommand{\alg}{\text{alg}}
\newcommand{\bbF}{\mathbb{F}}
\newcommand{\bbQ}{\mathbb{Q}}
\newcommand{\bbQp}{{\mathbb{Q}_p}}
\newcommand{\bbZ}{\mathbb{Z}}
\newcommand{\bfB}{\mathbf{B}}
\newcommand{\bfD}{\mathbf{D}}
\newcommand{\bfF}{\mathbf{F}}
\newcommand{\bfM}{\mathbf{M}}
\newcommand{\bra}[1]{{\langle #1 \rangle}}
\newcommand{\bs}{\backslash}
\newcommand{\calO}{\mathcal{O}}
\newcommand{\cn}{\colon}
\newcommand{\crys}{\text{crys}}
\newcommand{\cycl}{\text{cycl}}
\newcommand{\dif}{\text{dif}}
\newcommand{\dR}{\text{dR}}
\newcommand{\ep}{\epsilon}
\newcommand{\et}{\text{\'et}}
\DeclareMathOperator{\Ext}{Ext}
\DeclareMathOperator{\Fil}{Fil}
\newcommand{\fkm}{\mathfrak{m}}
\DeclareMathOperator{\Frac}{Frac}
\newcommand{\Frob}{\text{Frob}}
\newcommand{\Ga}{\Gamma}
\newcommand{\ga}{\gamma}
\DeclareMathOperator{\Gal}{Gal}
\DeclareMathOperator{\GL}{GL}
\DeclareMathOperator{\Gr}{Gr}
\DeclareMathOperator{\img}{img}
\newcommand{\inv}{^{-1}}
\newcommand{\la}{\lambda}
\newcommand{\Nekovar}{{Nekov\'a\v r}}
\DeclareMathOperator{\ord}{ord}
\newcommand{\ov}[1]{{\overline{#1}}}
\newcommand{\pst}{\text{pst}}
\DeclareMathOperator{\rank}{rank}
\newcommand{\rig}{\text{rig}}
\newcommand{\rlim}{\varinjlim}
\newcommand{\scrB}{\mathscr{B}}
\newcommand{\scrD}{\mathscr{D}}
\newcommand{\scrE}{\mathscr{E}}
\newcommand{\scrT}{\mathscr{T}}
\newcommand{\scrU}{\mathscr{U}}
\newcommand{\scrV}{\mathscr{V}}
\newcommand{\scrW}{\mathscr{W}}
\newcommand{\scrX}{\mathscr{X}}
\DeclareMathOperator{\Spf}{Spf}
\DeclareMathOperator{\Spm}{Spm}
\newcommand{\st}{\text{st}}
\newcommand{\tord}{{\nabla\text{ord}}}
\newcommand{\triv}{\text{triv}}
\newcommand{\ul}[1]{{\underline{#1}}}
\newcommand{\unr}{\text{unr}}
\newcommand{\vphi}{\varphi}
\newcommand{\wh}[1]{{\widehat{#1}}}
\newcommand{\wt}[1]{{\widetilde{#1}}}
\newcommand{\thesisfinalsentence}{}
\newtheorem{thm}{Theorem}[section]
\newtheorem{ppn}[thm]{Proposition}
\newtheorem{lem}[thm]{Lemma}
\newtheorem{cor}[thm]{Corollary}
\newtheorem{conj}[thm]{Conjecture}
\theoremstyle{definition}
\newtheorem{exmp}[thm]{Example}
\theoremstyle{remark}
\newtheorem{rem}[thm]{Remark}
\newtheorem{altdefn}[thm]{Alternate definition}
\author{Jonathan Pottharst}
\title{Triangulordinary Selmer Groups}
\begin{document}

\maketitle

\abstract{Let $p$ be a prime number, and let $K$ be a $p$-adic local
  field.  We study a class of semistable $p$-adic Galois
  representations of $K$, which we call {\it triangulordinary} because
  it includes the ordinary ones yet allows non-\'etale behavior in the
  associated $(\vphi,\Ga_K)$-modules over the Robba ring.  Our main
  result provides a description of the Bloch--Kato local condition of
  such representations.  We also propose a program, using variational
  techniques, that would give a definition of the Selmer group along
  the eigencurve of Coleman--Mazur, including notably its nonordinary
  locus.}

\tableofcontents

\section{Introduction}

In his seminal work \cite{G1}, Greenberg laid out a conjectural
Iwasawa theory for a motive $M$ at an {\it ordinary} prime $p$.  His
ordinary hypothesis had the effect of drastically simplifying the
$p$-adic Hodge theory of $M$, while on the other hand being expected
to hold for a dense set of primes $p$.

Although our knowledge began to improve immediately after the time of
Greenberg's work, we learned that Iwasawa theory is, in comparison,
very complicated at nonordinary primes.  For example, Bloch--Kato
found in \cite{BK} the right definition of the general Selmer group,
and in \cite{PR} Perrin-Riou $p$-adically interpolated the Bloch--Kato
dual exponential map, providing a close link between Euler systems
(which bound Selmer groups) and $p$-adic $L$-functions.  While these
developments require no ordinary hypothesis, they rely heavily on
difficult crystalline techniques, and do not lead to a convenient
statement of the main conjecture punctually, let alone variationally.

Much more recently, there has been a major shift in the methods
underlying $p$-adic Hodge theory.  The work of many people has shown
that, essentially, all the important information attached to a
$p$-adic representation $V$ of the absolute Galois group $G_K$ of a
$p$-adic local field $K$ can be read rather easily from its
$(\vphi,\Ga_K)$-module, an invariant originally associated to $V$ by
Fontaine in \cite{F}, and subsequently refined by several authors.
(See \S\ref{sect-phigamma} for numerous details and references.)
Notably, the $(\vphi,\Ga_K)$-module of $V$ over the Robba ring may be
dissected into subquotients in ways that are not readily visible on
the level of the $p$-adic representation $V$ itself.  This was first
harnessed by Colmez, who called $V$ {\it trianguline} if its
$(\vphi,\Ga_K)$-module is a successive extension of $1$-dimensional
objects, and the latter notion has played a crucial role in our
burgeoning understanding of the $p$-adic local Langlands
correspondence for $\GL_2(\bbQ_p)$.

In this paper we use $(\vphi,\Ga_K)$-modules to give a natural
weakening of Greenberg's ordinary hypothesis.  We identify those
representations whose $(\vphi,\Ga_K)$-module is the same as that of an
ordinary representation (except possibly as regards the $\vphi$-slopes
of its ordinary filtration), and call them {\it triangulordinary}.  We
show how the $\vphi$-slopes are only rarely ever used when analyzing
the $p$-adic Hodge theory of such $V$.

We present two pieces of evidence that our hypothesis is natural and
timely.  First, as our main result we show that the natural analogue
of Greenberg's Selmer groups coincide with those defined by
Bloch--Kato.  This generalizes a result of Flach (see \cite[Lemma
2]{Fl}), which was proved using Poitou--Tate local duality and
Euler--Poincar\'e characteristic computations, to the case of
arbitrary perfect residue field.  Second, we propose a variational
program to extend our theory to {\it define} the Selmer module of the
universal finite-slope eigenform over a dense open subset of the
Coleman--Mazur eigencurve (and the eigensurface obtained from it by
cyclotomic twisting); such a definition has been hitherto unknown.
Our program would encompass results of Kisin, which provide Selmer
groups for all the individual overconvergent eigenforms in the family.

After the writing of this article, we found that many of our technical
results appear in \cite{BC2}.  The works have slightly different aims,
so let us briefly note how they differ.  First, throughout \cite{BC2}
one has $K=\bbQ_p$, so that, in particular, $\vphi$ is a linear
operator with well-defined eigenvalues; our theory does not even
assume that $K/\bbQ_p$ is finite, which is necessary for all
crystalline representations to be trianguline.  As concerns Selmer
groups, they only explicitly treat those associated to {\it adjoint}
representations, by measuring when trianguline deformations are
crystalline.  Our work is valid even when there is no
deformation-theoretic interpretation available.  In any case, their
methods can easily show that $H^1_\tord \subseteq H^1_f$; we explain
when equality holds.

This work would not even have been attempted, were it not for the
influence of many people.  We owe particular thanks to Laurent Berger
and Kiran Kedlaya for introducing us to this subject, and for their
patience in explaining its ideas to us.  Similarly, we would like to
thank the organizers of the 2005 ``Atelier sur les Repr\'esentations
$p$-adiques'' at CRM in Montreal, as well as the 2006 ``Special
Semester on Eigenvarieties'' at Harvard---our experiences there
incited us to take up a serious study of the ideas required to write
this article.  We thank Barry Mazur for his enthusiasm and
encouragement throughout this project.  We are indebted to Ruochuan
Liu and Ga\"etan Chenevier for extremely helpful conversations.  Jan
\Nekovar\ arranged for our stay in Paris, during which time much of
this work was hammered out.  Finally, we heartily thank the NSF for
its support through the MSGRFP, under which all this work was
completed, and l'Institut de Math\'ematiques de Jussieu for its
hospitality.

Let us conclude by describing the contents of the paper.  In the
following section, we gather in one place the facts about
$(\vphi,\Ga_K)$-modules, Galois cohomology, and $p$-adic Hodge theory
that will be required in the sequel.  Our aim is to provide a precise
resum\'e and guide to the literature.  In \S\ref{sect-local} we
present our results concerning individual Galois representations.
Here the reader will find the definition of triangulordinary
representations and proofs of their basic properties, including the
comparison of Selmer local conditions.  The section concludes by
describing the relationship to the notions of ordinary and
trianguline, and discussing examples arising in nature, including
abelian varieties and modular forms.  In the \S\ref{sect-global}, we
propose a program to define Selmer modules for general variations of
$p$-adic Galois representations, and show how this would apply to the
eigencurve and overconvergent $p$-adic modular forms.
\thesisfinalsentence

\section{Review of $(\vphi,\Ga_K)$-modules}\label{sect-phigamma}

For this entire section, we fix a complete, discretely valued field
$K$ of characteristic $0$, supposed to have a residue field $k$ that
is perfect of characteristic $p > 0$.  Choose once and for all an
algebraic closure $\ov{K}$ of $K$ and set $G_K = \Gal(\ov{K}/K)$.  Our
goal in this section is to review the relevant theory of
$(\vphi,\Ga_K)$-modules, which provide a means of describing
continuous $p$-adic representations of $G_K$ and their associated
invariants.

\subsection{Definitions of many rings}\label{sect-phigamma-rings}

In terms of our fixed $K$, we define a dizzying list of objects.  Our
notation most closely follows that of Colmez; in particular, our $r$
varies inversely with Berger's.  For any field $E$, write $E_n =
E(\mu_{p^n})$ for $n \leq \infty$.  If $E$ carries a valuation, write
$\calO_E$ for its ring of integers.

{\it Fields.}  Let $F = \Frac W(k)$ be the fraction field of the Witt
vectors of $k$.  Then $F$ embeds canonically into $K$ as its maximal
absolutely unramified subfield, and $K/F$ is a finite, totally
ramified extension.  If $k'$ denotes the residue field of $K_\infty$,
which is finite over $k$, then we define $F' = \Frac W(k')$.  Then
$F'$ is the maximal unramified extension of $F$ in
$K_\infty$\footnote{One can have $F \subsetneq F'$ and $F' \nsubseteq
K$!  Take, for example, $p=3$ and $K=\bbQ_3(\sqrt{3})$.}, and $K' =
K.F'$ is the maximal unramified extension of $K$ in $K_\infty$, so
that $K_\infty/K'$ is totally ramified.  Observe that, since $K'
\subseteq K_\infty$, for $n \gg 0$ one has $K' \subseteq K_n$ and
hence $K'_n = K_n$ and $K'_\infty = K_\infty$, and therefore $(K')' =
K'$.

We set $H_K = \Gal(\ov{K}/K_\infty)$ and $\Ga_K = \Gal(K_\infty/K)$.
The latter group is rather simple.  If $E$ is any field of
characteristic not equal to $p$, then the action of $\Gal(\ov{E}/E)$
on $\mu_{p^\infty}(E)$ is described by a uniquely determined character
$\chi_\cycl \cn \Gal(\ov{E}/E) \to \bbZ_p^\times$, called the
cyclotomic character.  The fundamental theorem of Galois theory in
this case says that $\chi_\cycl$ identifies $\Gal(E_\infty/E)$ with a
closed subgroup of $\bbZ_p^\times$.  In the case at hand, using the
fact that $K$ is discretely valued, one finds that $K_\infty/K$ is
infinite, so that $\chi_\cycl$ identifies $\Ga_K$ with an open
subgroup of $\bbZ_p^\times$, which by force must be procyclic or
$\{\pm1\} \times (\text{procyclic})$.  One has $H_{K'} = H_K$ by our
earlier remarks, and $\Ga_{K'}$ has finite index in $\Ga_K$.
Moreover, one has
\[
\Ga_K/\Ga_{K'} = \Gal(K'/K) = \Gal(F'/F) = \Gal(k'/k).
\]
We have the following diagram, where (for readability) $\star =
\Ga_K/\Ga_{K'}$, ``ur'' means unramified, and ``tr'' means totally
ramified.
\[\xymatrix@=1.45pc{
& {\ov{K}} \\
& K_\infty \ar@{-}[u]^{H_K} \\
& & F'_\infty \ar@{-}[ul] \\
& K' \ar@{-}[uu]^{\Ga_{K'}}_{\text{tr}} \\
K \ar@{-}[ur]^\star_{\text{ur}} \ar@/^1pc/@{-}[uuur]^{\Ga_K}
& & F' \ar@{-}[ul]_{\text{tr}} \ar@{-}[uu]_{\text{tr}} \\
& F \ar@{-}[ul]_{\text{tr}} \ar@{-}[ur]^\star_{\text{ur}}
}\]

I would like to point out to the novice that dividing up $G_K$ into
$H_K$ and $\Ga_K$ is not traditional.  Classically, one divides up
$G_K$ into $I_K$ and $G_k$, where $I_K \subseteq G_K$ is the inertia
subgroup and $G_k = G_K/I_k$ is the absolute Galois group of $k$.
(Note that we have a canonical algebraic closure of $k$, namely the
residue field $\ov{k}$ of $\ov{K}$.)  In fact, it ends up not being
very hard to uncover traditional un/ramification information when
using $H_K$ and $\Ga_K$ instead, so this method is much more powerful,
at least in the setting of $p$-adic representations.

{\it Robba rings.}  There are three main variants of
$(\vphi,\Ga_K)$-modules, but we will only need the variety that live
over the Robba ring, so we now make a beeline for these.  All we
really need is that the ``field of norms'' construction allows one to
make a certain choice of an indeterminate $\pi_K$, and associates to
$K$ a constant $e_K\ (= \ord_{\wt{E}^+}(\ov{\pi}_K)) > 0$.  When
$K=F$, there is a {\it canonical} uniformizer which is written $\pi$,
and one can calculate that $e_F = p/(p-1)$.

Berger's Robba ring $B^\dag_{\rig,K}$ is defined to be the union of
the rings $B^{\dag,r}_{\rig,K}$ for $r>0$.  The latter are defined by
\[
B^{\dag,r}_{\rig,K} = \Bigg\{ f(\pi_K) = \sum_{n \in \bbZ} a_n \pi_K^n \
\Bigg|\ 
\begin{array}{cc}
a_n \in F',\\
f(X) \text{ convergent for } 0 < \ord_p(X) < r/e_K
\end{array} \Bigg\}.
\]
Although all these rings are non-Noetherian, they are not too
unpleasant.  For example, the rings $B^{\dag,r}_{\rig,K}$ are B\'ezout
domains: they admit a theory of principal divisors, and they have a
reasonable theory of finite free modules.  See \cite[\S4.2]{B1} for
details.

If $L$ is another CDVF with perfect residue field, with $K$
continuously embedded into it, there is a canonical embedding
$B^{\dag,r}_{\rig,K} \hookrightarrow B^{\dag,r}_{\rig,L}$ for $r$
sufficiently small.  More specifically, one can arrange for $\pi_L$ to
satisfy an Eisenstein polynomial over a subring of $B^\dag_{\rig,K}
\otimes_{F'} F_L'$ with respect to a suitable $\pi_K$-adic valuation.
(The term $F_L'$ is the maximal absolutely unramified subfield of
$L_\infty$, analogous to $F'$.)  The constants $e_K$ and $e_L$ are
normalized so that the growth conditions on power series coincide.
When $L/K$ is finite, we see that the $B^{\dag,r}_{\rig,K} \subseteq
B^{\dag,r}_{\rig,L}$ (for $r$ sufficiently small), and hence also
$B^\dag_{\rig,K} \subseteq B^\dag_{\rig,L}$, are finite ring
extensions.

A more delicate construction of these rings (as in \cite{B1}) endows
them with natural, commuting ring-endomorphism actions of $\Ga_K$ and
an operator $\vphi$.  One knows that $\vphi$ acts by Witt
functoriality on $a_n \in F'$, and $\Ga_K$ acts on $a_n$ through its
quotient $\Ga_K/\Ga_{K'} = \Gal(F'/F)$.  The action on $\pi_K$ is
generally not explicitly given (especially since there is some choice
in $\pi_K$), except when $K=F$, in which case $\vphi(\pi) =
(1+\pi)^p-1$ and $\ga \in \Ga_K$ obeys $\ga(\pi) =
(1+\pi)^{\chi_\cycl(\ga)}-1$.  The embeddings $B^\dag_{\rig,K}
\subseteq B^\dag_{\rig,L}$ are $\vphi$- and $\Ga_L$-equivariant
(considering $\Ga_L \hookrightarrow \Ga_K$).

Finally, we point out that the series $\log(1+\pi) = \sum_{n \geq 1}
\frac{(-1)^{n-1}}{n}\pi^n$ converges in $B^{\dag,r}_{\rig,\bbQp}$ for
every $r>0$, and we call its limit $t$.  By means of the above
embedding process, $t$ is an element of every $B^\dag_{\rig,K}$.  One
has $\vphi(t)=pt$ and $\ga(t) = \chi_\cycl(\ga)t$ for all $\ga \in
\Ga_K$.

\subsection{$\vphi$- and $(\vphi,\Ga_K)$-modules over the Robba
  ring}

Since many important facts about $(\vphi,\Ga_K)$-modules arise from
their underlying $\vphi$-modules, we first recall general properties
of $\vphi$-modules over the Robba ring.

Suppose $B$ is a ring equipped with a ring endomorphism $\vphi$.  A
$\vphi$-module over $B$ is a free, finite rank $B$-module $D$ equipped
with a semilinear action of $\vphi$, satisfying the nondegeneracy
condition that $\vphi(D)$ span $D$ over $B$.  The adjective
``semilinear'' indicates that one has $\vphi(bd) = \vphi(b)\vphi(d)$
for $b \in B$ and $d \in D$, rather than $\vphi(bd) = b\vphi(d)$.  We
write $\bfM(\vphi)_{/B}$ for the category of $\vphi$-modules.  Unless
otherwise specified, we understand that $B = B^\dag_{\rig,K}$.

It is worth noting that, in general, $\vphi(B^{\dag,r}_{\rig,K})
\nsubseteq B^{\dag,r}_{\rig,K}$, but instead
$\vphi(B^{\dag,r}_{\rig,K}) \subseteq B^{\dag,r/p}_{\rig,K}$ (the
latter of which contains $B^{\dag,r}_{\rig,K}$).  With this in mind,
it does not make sense to define a $\vphi$-module over
$B^{\dag,r}_{\rig,K}$.  The best we can (and will) ask for is a basis
of $D$ with respect to which the matrix for $\vphi$ lies in
$B^{\dag,r}_{\rig,K}$.  In this regard, there is the following crucial
lemma of Cherbonnier.

\begin{lem}[{\cite[Th\'eor\`eme I.3.3]{B2}}] For any $\vphi$-module $D$
and any $r$ sufficiently small, say $r < r(D)$, there exits a unique
$B^{\dag,r}_{\rig,K}$-lattice $D^r \subset D$ such that
$B^{\dag,r/p}_{\rig,K} \cdot \vphi(D^r)$ contains a basis of $D^r$.
One has
\[
B^{\dag,s}_{\rig,K} \otimes_{B^{\dag,r}_{\rig,K}} D^r
\stackrel{\sim}{\to} B^{\dag,s}_{\rig,K} \cdot D^r = D^s \subset D
\]
for $0 < s \leq r < r(D)$.
\end{lem}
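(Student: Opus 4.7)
The plan is to produce $D^r$ from any $B^\dag_{\rig,K}$-basis of $D$, to establish uniqueness by a Frobenius-descent bootstrap on the change-of-basis matrix between two candidates, and to deduce the compatibility statement as a formal consequence of uniqueness.

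For existence, choose any basis $(v_1,\dots,v_d)$ of $D$ over $B^\dag_{\rig,K}$ and let $A \in \GL_d(B^\dag_{\rig,K})$ be the matrix of $\vphi$, the invertibility being exactly the nondegeneracy hypothesis. Since the family $\{B^{\dag,s}_{\rig,K}\}$ is increasing as $s \to 0^+$ and exhausts $B^\dag_{\rig,K}$, all entries of $A$ and $A^{-1}$ lie in some $B^{\dag,s_0}_{\rig,K}$; set $r(D) := p s_0$. For any $r < r(D)$ one has $r/p < s_0$ and hence $A \in \GL_d(B^{\dag,r/p}_{\rig,K})$. A direct computation shows that the condition in the lemma is equivalent, for any chosen basis of a candidate lattice, to the statement that the matrix of $\vphi$ in that basis lies in $\GL_d(B^{\dag,r/p}_{\rig,K})$, so $D^r := \sum_i B^{\dag,r}_{\rig,K} \cdot v_i$ does the job.

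For uniqueness, suppose $D^r_1$ and $D^r_2$ both satisfy the condition, with bases $(e_i),(f_i)$ and $\vphi$-matrices $A_1, A_2 \in \GL_d(B^{\dag,r/p}_{\rig,K})$. The change-of-basis matrix $P \in \GL_d(B^\dag_{\rig,K})$, defined by $f_i = \sum_j P_{ji} e_j$, satisfies the commutation relation $A_1 \vphi(P) = P A_2$, equivalently $\vphi(P) = A_1^{-1} P A_2$. Beginning from $P \in M_d(B^{\dag,s}_{\rig,K})$ for some initial $s > 0$, I would bootstrap as follows: while $s \leq r/p$, the inclusion $B^{\dag,r/p}_{\rig,K} \subseteq B^{\dag,s}_{\rig,K}$ places the right-hand side in $M_d(B^{\dag,s}_{\rig,K})$, and the standard fact that $\vphi(g) \in B^{\dag,u}_{\rig,K}$ implies $g \in B^{\dag,pu}_{\rig,K}$ upgrades $P$ to $M_d(B^{\dag,ps}_{\rig,K})$. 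After finitely many steps $s$ passes $r/p$, at which point $P \in M_d(B^{\dag,s}_{\rig,K}) \subseteq M_d(B^{\dag,r/p}_{\rig,K})$, so all factors on the right lie in $M_d(B^{\dag,r/p}_{\rig,K})$, giving $\vphi(P) \in M_d(B^{\dag,r/p}_{\rig,K})$ and thus $P \in M_d(B^{\dag,r}_{\rig,K})$ by one more application of the descent lemma. The symmetric argument applied to $P^{-1}$ yields $P \in \GL_d(B^{\dag,r}_{\rig,K})$, hence $D^r_1 = D^r_2$.

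The compatibility clause is then formal: for $0 < s \leq r < r(D)$, the $B^{\dag,s}_{\rig,K}$-lattice $B^{\dag,s}_{\rig,K} \cdot D^r$ inherits a basis from $D^r$, with the same $\vphi$-matrix lying in $\GL_d(B^{\dag,r/p}_{\rig,K}) \subseteq \GL_d(B^{\dag,s/p}_{\rig,K})$, so it satisfies the hypothesis at radius $s$ and by uniqueness must equal $D^s$; the natural map from the scalar extension is an isomorphism by freeness of $D^r$. The main obstacle throughout is the analytic descent input used in the uniqueness step---namely that $\vphi(g) \in B^{\dag,u}_{\rig,K}$ forces $g \in B^{\dag,pu}_{\rig,K}$---which reflects the fact that Frobenius contracts domains of convergence by a factor of $p$; this input is available from the explicit formula $\vphi(\pi) = (1+\pi)^p - 1$ on $B^{\dag}_{\rig,F}$ together with the compatibility under the finite embeddings of \S\ref{sect-phigamma-rings}, though tracking the normalization constants in the general CDVF case requires some care.
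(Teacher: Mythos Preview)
The paper does not give its own proof of this lemma: it is stated with attribution to Cherbonnier and a citation to \cite[Th\'eor\`eme I.3.3]{B2}, and no argument is supplied. So there is no ``paper's proof'' to compare against directly. That said, the paper does reveal something of Berger's argument later on: in the proof of Proposition~\ref{ppn-phi-irr} it is remarked that the construction of $D^r$ in \cite{B2} shows that for \emph{any} basis $e$ of $D$ over $B^\dag_{\rig,K}$ there is an $r(e)>0$ such that $D^r$ is the $B^{\dag,r}_{\rig,K}$-span of $e$ for $0<r\le r(e)$. Your existence argument reproduces exactly this, and your compatibility deduction from uniqueness is the standard one.

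Your uniqueness argument is the expected Frobenius bootstrap and is sound in outline. The one point to be careful about is the analytic descent input you invoke, namely that $\vphi(g)\in B^{\dag,u}_{\rig,K}$ forces $g\in B^{\dag,pu}_{\rig,K}$. For $K=F$ this follows from the explicit description of $\vphi(\pi)=(1+\pi)^p-1$ and the associated radius computation near the outer boundary; but for general $K$ one works with a non-explicit uniformizer $\pi_K$, and the statement becomes that $B^{\dag,pu}_{\rig,K}=\vphi^{-1}(B^{\dag,u}_{\rig,K})\cap B^\dag_{\rig,K}$, which is exactly how Berger's rings are set up (via the tilde-rings $\wt{B}^{\dag,r}_\rig$ and the identification $B^{\dag,r}_{\rig,K}=\wt{B}^{\dag,r}_\rig\cap B^\dag_{\rig,K}$). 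You flag this honestly, and it is indeed where the content lies; once granted, the rest of your argument is correct.
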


We will make heavy use of the slope theory for $\vphi$-modules.
Currently the best reference for this material is \cite{K}, whose
proof easily specializes to give the classical Dieudonn\'e--Manin
theorem.

\begin{thm}[\cite{K}]
There is an over-ring $\wt{B}^\dag_\rig$ of $B^\dag_{\rig,K}
\otimes_{F'} \wh{F^\unr}$, with an extension of the operator $\vphi$
to it, such that the following claims hold.
\begin{enumerate}
\item For every $\vphi$-module $D$ over $\wt{B}^\dag_\rig$, there is a
  finite extension $L$ of $\wh{F^\unr}$ such that $D
  \otimes_\wh{F^\unr} L$ admits a basis of $\vphi$-eigenvectors, with
  eigenvalues in $L$.  The valuations of the eigenvalues, with
  multiplicity, are uniquely determined by $D$.  (Call them the {\em
  slopes} of $D$.)
\item There exists a unique filtration $\Fil^* \subseteq D$
  obeying the conditions that:
  \begin{itemize}
  \item each of the $\Gr^n$ has only one slope $s_n$ (but perhaps with
        multiplicity), and
  \item $s_1 < s_2 < \cdots < s_\ell$.
  \end{itemize}
\item If $D$ descends to $B^\dag_{\rig,K}$, then $\Fil^*$ descends
  with it uniquely.
\end{enumerate}
\end{thm}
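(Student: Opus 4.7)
The plan is to follow Kedlaya's proof strategy for the slope filtration theorem, with the classical Dieudonn\'e--Manin classification as the essentially unramified prototype, adapted to the overconvergent setting of Robba rings.

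For part (1), I would construct $\wt{B}^\dag_\rig$ as the overconvergent subring inside the maximally unramified completed extension of $B^\dag_{\rig,K} \otimes_{F'} \wh{F^\unr}$, and extend $\vphi$ canonically via Witt functoriality on the constant coefficients. Given $D$ over $\wt{B}^\dag_\rig$, I would pass to a finite extension $L/\wh{F^\unr}$, choose a $\vphi$-stable integral lattice, and reduce modulo a uniformizer to get a $\vphi$-module over an algebraically closed perfect residue field. By classical Dieudonn\'e--Manin this reduction is semisimple, with isotypic pieces parameterized by slopes, and the eigenvector decomposition lifts to characteristic zero by a standard contraction-mapping argument using completeness of $L$, possibly after further enlarging $L$. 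The multiset of slopes is intrinsic to $D$, being read off from the Newton polygon of the characteristic polynomial of a suitable power of $\vphi$ on a $\vphi$-stable basis.

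For part (2), the filtration $\Fil^*$ is obtained from part (1) by letting its $n$-th step collect those eigenvector summands of slope at most $s_n$. Uniqueness reduces to the vanishing $\Hom_\vphi(M, N) = 0$ whenever $M$ and $N$ are pure of slopes $s > s'$: a nonzero equivariant map would send a $\vphi$-eigenvector of eigenvalue of valuation $s$ to a vector on which $\vphi$ acts with eigenvalues of valuation $s'$, contradicting valuation considerations on $L$. Any two filtrations satisfying the stated conditions must therefore coincide step by step.

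Part (3)---the descent of $\Fil^*$ from $\wt{B}^\dag_\rig$ to $B^\dag_{\rig,K}$---is the deepest and most technical step, and is the main obstacle. By uniqueness, $\Fil^*$ is stable under every $B^\dag_{\rig,K}$-linear, $\vphi$-equivariant automorphism of $\wt{B}^\dag_\rig$, which is suggestive but does not immediately yield descent: the rings are non-Noetherian, the extension is neither finite nor Galois in the classical sense, and faithfully flat descent machinery does not apply cleanly. Kedlaya's key insight is to prove more directly that any pure $\vphi$-module over $\wt{B}^\dag_\rig$ which is defined over $B^\dag_{\rig,K}$ already admits a basis over $B^\dag_{\rig,K}$ witnessing its purity; this is done by a delicate successive approximation exploiting the overconvergence of the matrix of $\vphi$ and the explicit valuation structure of these rings. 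Granted this descent result for pure objects, the graded pieces of $\Fil^*$ descend one at a time, and then so does the filtration itself.
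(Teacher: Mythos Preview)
The paper does not prove this theorem. It is stated with attribution \cite{K} (Kedlaya) and used as a black box; there is no ``paper's own proof'' to compare your proposal against. The only related argument the paper gives is for Theorem~\ref{thm-phigamma-equiv}, where it notes that the slope theory of \cite{K} is one of several ingredients, but even there it simply cites the result.

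Since there is nothing to compare, let me briefly comment on your sketch on its own terms. Your outline captures the shape of Kedlaya's argument reasonably well, especially the identification of part~(3) as the hard step and the fact that descent is obtained not by Galois/flat descent machinery but by a direct approximation argument exploiting overconvergence. One point to be careful about: in part~(1), the reduction-and-lift picture you describe is closer to the classical Dieudonn\'e--Manin setting than to Kedlaya's actual argument over the extended Robba ring $\wt{B}^\dag_\rig$, which is not a complete DVR and for which ``reduce mod a uniformizer'' is not available as stated; Kedlaya instead works with a family of Gauss norms and an auxiliary ``extended'' ring where one can manufacture eigenvectors by hand. Also, in part~(3) the descent is not literally carried out graded-piece-by-graded-piece after descending pure objects; rather, Kedlaya shows that the first step of the slope filtration (the minimal-slope sub) already descends, and then proceeds inductively on rank. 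These are refinements rather than errors in your overall strategy.
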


One calls a $\vphi$-module {\it \'etale} if its only slope is $0$, and
denotes by $\bfM^\et(\vphi)_{/B^\dag_{\rig,K}} \subset
\bfM(\vphi)_{/B^\dag_{\rig,K}}$ the full subcategory of these.

A $(\vphi,\Ga_K)$-module (over $B^\dag_{\rig,K}$) is a $\vphi$-module
$D$ equipped with a semilinear action of $\Ga_K$ that is continuous
for varying $\ga \in \Ga_K$, and commutes with $\vphi$.  The category
of these is denoted by $\bfM(\vphi,\Ga_K)_{/B^\dag_{\rig,K}}$.  As a
consequence of the uniqueness statements in the above theorems about
$\vphi$-modules, one finds that $\Ga_K$ stabilizes the lattices $D^r$
and the slope filtration.  A $(\vphi,\Ga_K)$-module is called \'etale
if its underlying $\vphi$-module is; the category of these is written
$\bfM^\et(\vphi,\Ga_K)_{/B^\dag_{\rig,K}}$.

The following theorem, which combines work of many people, is the
reason that $(\vphi,\Ga_K)$-modules are important in the study of
$p$-adic Galois representations.  Let ${\bf Rep}_\bbQp(G_K)$ denote
the category of finite-dimensional $\bbQp$-vector spaces equipped with
a continuous, linear action of $G_K$.

\begin{thm}\label{thm-phigamma-equiv}
There is a canonical fully faithful embedding
\[
\bfD^\dag_\rig \cn {\bf Rep}_\bbQp(G_K) \hookrightarrow
\bfM(\vphi,\Ga_K)_{/B^\dag_{\rig,K}},
\]
whose essential image is $\bfM^\et(\vphi,\Ga_K)_{/B^\dag_{\rig,K}}$.
\end{thm}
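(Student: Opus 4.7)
The plan is to assemble the equivalence by passing through a tower of rings, moving from Fontaine's original theorem at the bottom up to the Robba ring on top, and then to invoke Kedlaya's slope theory to pin down the essential image.

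First I would recall Fontaine's foundational theorem, which gives an equivalence of categories between $\Rep_{\bbZp}(G_K)$ and étale $(\vphi,\Ga_K)$-modules over the Cohen ring $\bfA_K$ of the field of norms. Inverting $p$ yields an equivalence between $\Rep_\bbQp(G_K)$ and $\bfM^\et(\vphi,\Ga_K)_{/\bfB_K}$. The functor in one direction is $V \mapsto (\bfB \otimes_\bbQp V)^{H_K}$ for an appropriate period ring $\bfB$, with quasi-inverse $D \mapsto (\bfB \otimes_{\bfB_K} D)^{\vphi=1}$; fully faithfulness of this step is Fontaine's theorem, proved by constructing explicit $G_K$-invariants.

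Next I would invoke the Cherbonnier--Colmez overconvergence theorem: every étale $(\vphi,\Ga_K)$-module over $\bfB_K$ descends uniquely to an overconvergent module over $\bfB^\dag_K$, yielding a further equivalence with $\bfM^\et(\vphi,\Ga_K)_{/\bfB^\dag_K}$. Base changing along the faithfully flat map $\bfB^\dag_K \hookrightarrow B^\dag_{\rig,K}$ produces the desired functor $\bfD^\dag_\rig$, and gives a fully faithful embedding onto the image of $\bfM^\et(\vphi,\Ga_K)_{/\bfB^\dag_K}$ inside $\bfM(\vphi,\Ga_K)_{/B^\dag_{\rig,K}}$. Fully faithfulness is then straightforward: one checks that $\Hom$-groups, which can be computed from $\vphi$- and $\Ga_K$-invariants of the internal hom, are preserved under the base change from $\bfB^\dag_K$ to $B^\dag_{\rig,K}$ because $t$-torsion considerations do not enter for étale objects and the inclusion of rings is compatible with all the structures.

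The real content is the identification of the essential image with the étale subcategory. One direction is easy: the functor $\bfD^\dag_\rig$ lands in the étale subcategory because every step preserves étaleness (this is immediate for objects that already come from $\bfA_K$, where the $\vphi$-action is integral and invertible after inverting $p$). For the reverse inclusion, one must show that every étale $(\vphi,\Ga_K)$-module $D$ over $B^\dag_{\rig,K}$ descends to an étale module over $\bfB^\dag_K$. This is where I expect the main obstacle to lie, and where one must appeal to Kedlaya's slope filtration theorem: any $\vphi$-module of slope zero over the Robba ring has, after choosing an appropriate lattice via Cherbonnier's lemma, a canonical étale integral model over $\bfB^\dag_K$. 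Uniqueness of the slope filtration ensures that the $\Ga_K$-action descends compatibly, and so the resulting module is a well-defined étale $(\vphi,\Ga_K)$-module over $\bfB^\dag_K$, completing the circle.

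In summary, the hard step is the essential surjectivity onto the étale part, which is really the assertion of Kedlaya's theorem together with a descent of the $\Ga_K$-action; everything else is a formal chain of equivalences supplied by Fontaine and Cherbonnier--Colmez.
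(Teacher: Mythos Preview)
Your proposal is correct and follows essentially the same route as the paper: Fontaine's equivalence over $B_K$, Cherbonnier--Colmez overconvergence to descend to $B^\dag_K$, and Kedlaya's slope theory to identify the \'etale objects over $B^\dag_{\rig,K}$ with those descended from $B^\dag_K$. The paper's proof is terser and bundles full faithfulness together with essential surjectivity into the single assertion of ``unique descent'' via Kedlaya, whereas you treat them separately, but the architecture is the same.
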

\begin{proof}
The equivalence of Galois representations with \'etale
$(\vphi,\Ga_K)$-modules over the fraction field $B_K$ of the Cohen
ring of the field of norms is proved in \cite{F}.  The overconvergence
of such an object, which means that it can be uniquely defined over
$B^\dag_K \subset B_K$, is proved in \cite{CC}.  The equivalence of an
object over $B^\dag_K$ being \'etale over $B_K$ and \'etale over
$B^\dag_{\rig,K}$, as well as the unique descent of an \'etale object
over $B^\dag_{\rig,K}$ to $B^\dag_K$, follows from the slope theory of
\cite{K}.
\end{proof}

Let $L$ be another CDVF with perfect residue field, with $K$
continuously embedded into it, and let $D$ be a
$B^\dag_{\rig,K}$-module.  We use the shorthand
\[
D_L := D \otimes_{B^\dag_{\rig,K}} B^\dag_{\rig,L}
\]
throughout this article.  If $D$ has a $\vphi$-action, then so does
$D_L$.  If $D$ has a $\Ga_K$-action, then $D_L$ has a $\Ga_L$-action.
Suppose $\ov{L}$ is an algebraic closure of $L$ containing $\ov{K}$,
and $G_L = \Gal(\ov{L}/L)$.  Then for all $V \in {\bf
Rep}_\bbQp(G_K)$, one has $\bfD^\dag_\rig(V|_{G_L}) =
\bfD^\dag_\rig(V)_L$.

Can one recover the invariants of $V$ from $\bfD^\dag_\rig(V)$?
Indeed, and quite simply, as we recall in the next three sections.

\subsection{Galois cohomology}
\label{sect-phigamma-cohom}

Here we recall some results of Liu in \cite{L}, which are variants of
those of Herr, that allow one to recover the Galois cohomology of $V$
from $D = \bfD^\dag_\rig(V)$.

Recall that $\Ga_K \hookrightarrow \bbZ_p^\times$, and hence is
procyclic, except when $p=2$ and $-1 \in \img \Ga_K$, and in this case
$\Ga_K/\{\pm1\}$ is procyclic.  If $\Ga_K$ is procyclic, we set
$\Delta_K = \{1\}$, and otherwise we set $\Delta_K = \{\pm1\}$.  We
let $\ga \in \Ga_K/\Delta_K$ denote a topological generator.

In his thesis, Herr associated to $D$ the complex
\[
C^\bullet(D) = C_{\vphi,\ga}^\bullet(D) \cn 0 \to D^{\Delta_K}
\xrightarrow{(\vphi-1,\ga-1)} D^{\Delta_K} \oplus D^{\Delta_K}
\xrightarrow{(1-\ga) \oplus (\vphi-1)} D^{\Delta_K} \to 0,
\]
concentrated in degrees $[0,2]$.  One can easily show that the
cohomology groups
\[
H^i(D) = H^i(C_{\vphi,\ga}^\bullet(D))
\]
are independent of $\ga$, and moreover that they are canonically
identified with the Yoneda groups:
\[
H^i(D) = \Ext^i_{\bfM(\vphi,\Ga_K)_{/\bfB^\dag_{\rig,K}}}({\bf 1},D),
\]
where ${\bf 1}$ denotes the unit object (i.e.\ $B^\dag_{\rig,K}$
itself as a $(\vphi,\Ga_K)$-module).

In the case where $D = \bfD^\dag_\rig(V)$ is \'etale, we recover
continuous Galois cohomology.  Namely, by \cite[Theorem 2.6]{L} there
is a canonical isomorphism of $\delta$-functors $H^*(G_K,V)
\stackrel{\sim}{\to} H^*(\bfD^\dag_\rig(V))$.  In degree $i=1$, this
says that Galois cohomology classes of $V$ are in a natural bijection
with extension classes of the $(\vphi,\Ga_K)$-module ${\bf 1}$ by $D$.
It is these extension classes that we will be measuring later in this
article.

Similar (yet simpler) statements can be made for $\Ga_K$-modules
without $\vphi$-action: define $C_\ga^\bullet(D) \cn 0 \to
D^{\Delta_K} \xrightarrow{\ga-1} D^{\Delta_K} \to 0$, concentrated in
degrees $[0,1]$.  Then the cohomology of this complex is independent
of $\ga$ and computes the Yoneda groups
$\Ext^*_{\bfM(\Ga_K)_{/\bfB^\dag_{\rig,K}}}({\bf 1},D)$, as well as
the continuous group cohomology $H^*(\Ga_K,D)$.

\subsection{de~Rham theory}

We now explain the link between $(\vphi,\Ga_K)$-modules and $p$-adic
Hodge theory, first exploited by Cherbonnier--Colmez, and later
extended to the Robba ring by Berger.

In his thesis, Berger constructed maps $\iota_n \cn
B^{\dag,p^{-n}}_{\rig,K} \to K_n[\![t]\!]$ for all $n \geq n(K)$.  (In
particular, one assumes $n$ is large enough so that $K'_n[\![t]\!] =
K_n[\![t]\!]$.)

There are two ways to describe $\iota_n$.  On the one hand, one first
proves that $f \in \wt{B}^\dag_\rig$ (as in the slope filtration
theorem) converges in Fontaine's ring $B_\dR^+$ if and only $f \in
\wt{B}^{\dag,1}_\rig$.  Write $\iota(f) = \img(f) \in B_\dR^+$.  One
next shows that $\vphi^{-n}(\wt{B}^{\dag,p^{-n}}_\rig) \subseteq
\wt{B}^{\dag,1}_\rig$, and the image of $f \in
B^{\dag,p^{-n}}_{\rig,K}$ under $\iota_n = \iota \circ \vphi^{-n}$
lies in $K'_n[\![t]\!]$.

On the other hand, there is the following geometric picture when
$K=F$.  An element of $B^{\dag,r}_{\rig,F}$ is a rigid analytic
function on an annulus around $\pi=0$.  One has $r \geq 1$ if and only
if $\ep^{(1)}-1$ lies in this annulus.  Since $t = t(\pi)$ vanishes to
order $1$ at every $\pi=\ep^{(n)}-1$, it serves as a uniformizing
parameter there.  The map $\iota$ corresponds to the operation of
taking the formal germ at $\pi=\ep^{(1)}-1$.  For $n \geq 0$, the
operator $\vphi^{-n}$ stretches the domains of functions towards the
center of the disk.  After hitting a function by $\vphi\inv$ enough
times (i.e., if $f \in B^{\dag,r}_{\rig,F}$ and we ensure $rp^n \geq
1$), the point $\ep^{(1)}-1$ lies in its domain and we may localize
there.  Another way of saying this is that $\iota_n$ performs
completion at $\ep^{(n)}-1$.

Hopefully, the above description motivates the following formulas.
When $K=F$, given $f = \sum_{k \in \bbZ} a_k\pi^k$ with $a_k \in F$,
we may calculate $\iota_n(f)$ by means of the following rules:
\[
\iota_n(a_k) = \vphi^{-n}(a_k) \qquad \text{and} \qquad \iota_n(\pi) =
\ep^{(n)}\exp(t/p^n)-1.
\]
When $K$ does not necessarily equal $F$, a general $f$ has the form
$\sum_k a_k\pi_K^k$ with $a_k \in F'$.  Then one still has
$\iota_n(a_k) = \vphi^{-n}(a_k)$, but $\iota_n(\pi_K)$ is not
generally explicit.

The $\iota_n$ are $\Ga_K$-equivariant, and for varying $n$ they fit
into the following diagram.
\[\xymatrix{
B^{\dag,r}_{\rig,K} \ar[r]^{\iota_n} \ar[d]_\vphi
& K_n[\![t]\!] \ar@{^{(}->}[d] \\
B^{\dag,r/p}_{\rig,K} \ar[r]^{\iota_{n+1}} & K_{n+1}[\![t]\!]
}\]

Now we come to a fundamental construction.  Given a $\vphi$-module $D$
over $B^\dag_{\rig,K}$, we associate to it $D_\dif^+ = D^r
\otimes_{B^{\dag,r}_{\rig,K},\iota_n} K_\infty[\![t]\!]$, and $D_\dif
= D_\dif^+[t\inv]$.  Using the $\vphi$-structure in an essential way,
one shows that this definition is independent of the choices of $r$
and $n$ satisfying $r < r(V)$, $n \geq n(K)$, and $p^nr \geq 1$.

If $D$ is actually a $(\vphi,\Ga_K)$-module, then $D_\dif^+$ and
$D_\dif$ admit $\Ga_K$-actions.  Thus, we are able to define $D_\dR^+
= (D_\dif^+)^{\Ga_K}$ and $D_\dR = (D_\dif)^{\Ga_K}$.  These are
$K$-vector spaces of dimension $\leq \rank D$, and they carry a
decreasing, separated, and exhaustive filtration induced by the
$t$-adic filtration on $K_\infty(\!(t)\!)$.  One says that $D$ is {\it
de~Rham} (resp.\ {\it +de~Rham}) if $\dim_K D_\dR = \rank D$ (resp.\
$\dim_K D_\dR^+ = \rank D$), and denotes by
$\bfM^\dR(\vphi,\Ga_K)_{/B^\dag_{\rig,K}} \subset
\bfM(\vphi,\Ga_K)_{/B^\dag_{\rig,K}}$ the full subcategory of de~Rham
objects.

\begin{thm}[{\cite[\S5.3\footnote{Comparing \cite[Proposition 5.7
  and its proof]{B1} with [{\it loc.~cit.}, Corollaire 5.8] makes
  clear a slight typo in the statement of the proposition; it is the
  corrected form of the proposition that we use here.}]{B1}}] There
  exist functorial identifications respecting filtrations,
  $\bfD_\dR^+(V) = (\bfD^\dag_\rig(V))_\dR^+$ and $\bfD_\dR(V) =
  (\bfD^\dag_\rig(V))_\dR$.
\end{thm}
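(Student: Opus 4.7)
My plan is to route the comparison through Berger's large period ring $\wt{B}^\dag_\rig$, inside which $B^\dag_{\rig,K}$ embeds as its ring of $H_K$-invariants and which admits a canonical localization into Fontaine's $B_\dR^+$; the governing idea is that $(\bfD^\dag_\rig(V))_\dif^+$ should coincide, as a filtered $K_\infty[\![t]\!]$-module, with $(V \otimes_{\bbQp} B_\dR^+)^{H_K}$, after which a further passage to $\Ga_K$-invariants automatically yields $(V \otimes_{\bbQp} B_\dR^+)^{G_K} = \bfD_\dR^+(V)$. Concretely, I would first recall that $\iota = \iota_0$ extends to a $\Ga_K$-equivariant ring map $\wt{B}^{\dag,1}_\rig \to B_\dR^+$, and that $\iota_n := \iota \circ \vphi^{-n}$ makes sense on $\wt{B}^{\dag,p^{-n}}_\rig$ for $n \geq 0$; on taking $H_K$-invariants these restrict to the $\iota_n \cn B^{\dag,p^{-n}}_{\rig,K} \to K_n[\![t]\!]$ already fixed in \S\ref{sect-phigamma-rings}. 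The ambient identification $(B_\dR^+)^{H_K} = K_\infty[\![t]\!]$, compatible with the $t$-adic filtration, comes from Fontaine--Tate--Sen theory (in particular $\bbC_p^{H_K} = \wh{K}_\infty$).

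Second---and this is the technical core---I would establish a natural isomorphism
\[
\bfD^\dag_\rig(V)^r \otimes_{B^{\dag,r}_{\rig,K},\,\iota_n} B_\dR^+ \;\stackrel{\sim}{\longrightarrow}\; V \otimes_{\bbQp} B_\dR^+
\]
by base-changing the tautological identification
\[
\bfD^\dag_\rig(V)^r \otimes_{B^{\dag,r}_{\rig,K}} \wt{B}^{\dag,r}_\rig \;\stackrel{\sim}{\longrightarrow}\; V \otimes_{\bbQp} \wt{B}^{\dag,r}_\rig
\]
(which packages the equivalence of Theorem~\ref{thm-phigamma-equiv} at the level of lattices) along $\iota_n$. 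Taking $H_K$-invariants and using $(B_\dR^+)^{H_K} = K_\infty[\![t]\!]$, the left side becomes $\bigl(\bfD^\dag_\rig(V)\bigr)_\dif^+$ by the very definition used in the text, and the right side becomes $\bigl(V \otimes_{\bbQp} B_\dR^+\bigr)^{H_K}$; the isomorphism so obtained visibly respects the $t$-adic filtrations on both sides. A final application of $(-)^{\Ga_K}$ then yields the asserted filtered identification with $\bfD_\dR^+(V)$, and inverting $t$ produces the $\bfD_\dR(-)$ variant at once.

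The main obstacle lies in that second step, specifically in verifying that taking $H_K$-invariants really does commute with the base change from $\wt{B}^{\dag,r}_\rig$ to $B_\dR^+$. This amounts to appropriate vanishing (or smallness) of higher continuous $H_K$-cohomology of the relevant $\wt{B}^{\dag,r}_\rig$-modules---precisely the Tate--Sen almost-\'etale descent input at the heart of Berger's original computation. Independence of the auxiliary choices of $r$ and $n$ is not a serious obstacle: it follows a posteriori from the intrinsic nature of $\bfD_\dR^+(V)$, with the $\vphi$-compatibility of the $\iota_n$ (the commutative square at the end of \S\ref{sect-phigamma-rings}) supplying the required bookkeeping between different choices.
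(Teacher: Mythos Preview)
The paper does not supply its own proof of this theorem: it is stated with a citation to Berger \cite[\S5.3]{B1} and immediately followed by a corollary. There is therefore nothing in the paper to compare your proposal against; the result is imported wholesale from Berger's thesis.

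That said, your sketch is broadly faithful to the shape of Berger's argument---routing through $\wt{B}^\dag_\rig$, localizing into $B_\dR^+$ via $\iota$, and descending along $H_K$. One caution: the identity $(B_\dR^+)^{H_K} = K_\infty[\![t]\!]$ is not literally correct as written (at best one gets $\wh{K_\infty}$ in each graded piece, and $B_\dR^+$ does not split over its $t$-adic filtration), so the passage from $(V \otimes_{\bbQp} B_\dR^+)^{H_K}$ to $D_\dif^+$ is more delicate than a one-line invocation of Tate--Sen. Berger's actual route (see \cite[Propositions~5.4--5.7]{B1}) is to show directly that the image of $D^r$ under $\iota_n$ inside $V \otimes_{\bbQp} B_\dR^+$ spans a $K_n[\![t]\!]$-lattice, and that taking $\Ga_K$-invariants of this lattice already recovers $(V \otimes_{\bbQp} B_\dR^+)^{G_K}$, bypassing any need to compute $(B_\dR^+)^{H_K}$ explicitly. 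You have correctly located where the real work lies; just be aware that the descent is handled by a lattice comparison inside $B_\dR^+$ rather than by the ring-theoretic identity you wrote down.
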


\begin{cor}
A representation $V$ is de~Rham (resp.\ +de~Rham) if and only if its
$(\vphi,\Ga_K)$-module is trivialized as semilinear $\Ga_K$-module
upon base change to $K_\infty(\!(t)\!)$ (resp.\ $K_\infty[\![t]\!]$).
\end{cor}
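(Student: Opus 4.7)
The plan is to reduce the corollary to a Galois-descent statement. By the preceding theorem, writing $D := \bfD^\dag_\rig(V)$ one has $\bfD_\dR(V) = (D_\dif)^{\Ga_K}$ and $\bfD_\dR^+(V) = (D_\dif^+)^{\Ga_K}$, where $D_\dif$ and $D_\dif^+$ are free of rank $n := \dim_{\bbQp} V$ over $R := K_\infty(\!(t)\!)$ and $R^+ := K_\infty[\![t]\!]$ respectively. Thus $V$ is de~Rham (resp.\ +de~Rham) iff $\dim_K (D_\dif)^{\Ga_K} = n$ (resp.\ $\dim_K (D_\dif^+)^{\Ga_K} = n$). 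The corollary therefore reduces to the descent lemma: a free rank-$n$ module $M$ over $R$ or $R^+$ with continuous semilinear $\Ga_K$-action is trivial (i.e., has an $R$- or $R^+$-basis of $\Ga_K$-fixed vectors) if and only if $\dim_K M^{\Ga_K} = n$.

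The forward direction is trivial. For the converse, two standard ingredients underlie the argument. First, $R^{\Ga_K} = (R^+)^{\Ga_K} = K$: expanding a fixed element as $\sum a_j t^j$ forces $\ga(a_j) = \chi_\cycl(\ga)^{-j} a_j$ for all $\ga \in \Ga_K$, and Tate's vanishing $K_\infty(j)^{\Ga_K} = 0$ for $j \neq 0$ (immediate because $\chi_\cycl^{-j}$ is nontrivial on every open subgroup of $\Ga_K$, while each element of $K_\infty$ is fixed by some such subgroup) combined with $K_\infty^{\Ga_K} = K$ yields the claim. Second, $K$-linearly independent fixed vectors are automatically $R$-linearly independent: take a minimal $R$-dependence $\sum r_i e_i = 0$ with $r_1 = 1$, apply any $\ga \in \Ga_K$, and subtract to produce a shorter relation whose coefficients lie in $R^{\Ga_K} = K$, contradicting independence.

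These two facts settle the de~Rham case immediately: $n$ fixed, $R$-linearly independent vectors in the $n$-dimensional $R$-vector space $M$ form an $R$-basis. In the +de~Rham case, the same ingredients produce a $\Ga_K$-stable free $R^+$-submodule $M' := \bigoplus R^+ e_i \subseteq M$ with $M'[1/t] = M[1/t]$ (by the de~Rham case applied to $M[1/t]$), so $M/M'$ is $t$-power torsion. The remaining task is to show this quotient vanishes. One filters $M/M'$ by powers of $t$ and inspects the successive quotients as finite-dimensional semilinear $\Ga_K$-modules over $K_\infty$; invoking Tate's vanishing for $K_\infty(j)$, $j \neq 0$, together with the hypothesis that $M^{\Ga_K}$ already attains the full dimension $n$, precludes any nonzero contribution. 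Iterating gives $M = M'$.

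The main obstacle is this final step of the +de~Rham case: descending the trivialization of $D_\dif$ as a semilinear $R$-module all the way to $D_\dif^+$ as a semilinear $R^+$-module. Formal descent handles the generic fiber, but controlling $\Ga_K$-stable $R^+$-lattices inside a trivialized semilinear $R$-module requires the genuine Hodge--Tate input of Tate's cohomology computations for the $\chi_\cycl$-twists of $K_\infty$---the substantive classical result behind the +de~Rham half of the equivalence.
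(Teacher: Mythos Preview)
The paper states this corollary without proof. Your de~Rham argument is correct and is the intended one: $K$-linearly independent $\Ga_K$-fixed vectors in $D_\dif$ are automatically $K_\infty(\!(t)\!)$-linearly independent (your minimal-relation trick works because $K_\infty(\!(t)\!)$ is a field with $K_\infty(\!(t)\!)^{\Ga_K}=K$), so $n$ of them form a basis.

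The $+$de~Rham half, however, has a genuine gap at the final step, and in fact that step cannot be completed, because the $+$de~Rham half of the corollary is false as literally written. Take $V = \chi_\cycl^{-1}$. Then $D_\dif^+ = K_\infty[\![t]\!]\cdot e$ with $\ga(e) = \chi_\cycl(\ga)^{-1} e$, and one computes $(D_\dif^+)^{\Ga_K} = K\cdot te$, so $V$ is $+$de~Rham. But $D_\dif^+$ is \emph{not} $\Ga_K$-isomorphic to $K_\infty[\![t]\!]$: such an isomorphism would send $e$ to a unit $u$ with $\ga(u) = \chi_\cycl(\ga)^{-1} u$, and the constant term $u_0 \in K_\infty^\times$ would then satisfy $\ga(u_0) = \chi_\cycl(\ga)^{-1} u_0$, which is impossible. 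In your notation this is exactly the situation $M' = K_\infty[\![t]\!]\cdot te \subsetneq M$ with $M/M' \cong K_\infty(-1) \neq 0$; your sentence ``precludes any nonzero contribution'' is where the argument breaks, and no amount of Tate vanishing will repair it. (The paper's own computation of $H^0(\Ga_K,t^nK_\infty[\![t]\!])$ for $n\le 0$ is consistent with this: it identifies $D_\dif^+$ for such $V$ with $t^{-1}K_\infty[\![t]\!]$, not with $K_\infty[\![t]\!]$.) The statement that is actually true, and that the paper in practice uses, is that $V$ is $+$de~Rham iff $D_\dif^+$ \emph{contains} a trivializing $K_\infty[\![t]\!]$-basis of $D_\dif$ (equivalently $D_\dR^+ = D_\dR$), not that $D_\dif^+$ itself is trivial.
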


\begin{rem}\label{rem-moral}
Therefore, morally, only the {\it existence} of a $\vphi$-structure is
needed in order to construct $D_\dif^{(+)}$, and the property of $D$
being (+)de~Rham is predominantly a condition on the $\Ga_K$-action on
$D$.  This observation is the basis of our entire method; see
Proposition \ref{ppn-phi-irr} below for a precise statement.
\end{rem}

\subsection{$p$-adic monodromy}
\label{sect-phigamma-monodromy}

We will require the following results at a crucial point of the proof
of our main theorem, as well as to gain a more down-to-earth picture
of its content.

Given a $(\vphi,\Ga_K)$-module $D$, we write for brevity
\[
D[t\inv] = D \otimes_{B^\dag_{\rig,K}} B^\dag_{\rig,K}[t\inv]
\quad \text{and} \quad
D[\log(\pi),t\inv]
 = D \otimes_{B^\dag_{\rig,K}} B^\dag_{\rig,K}[\log(\pi),t\inv],
\]
where $t$ is the element of $B^\dag_{\rig,K}$ defined at the
conclusion of \S\ref{sect-phigamma-rings}, and where the element
$\log(\pi)$ is a free variable over $B^\dag_{\rig,K}$ equipped with
\[
\vphi(\log(\pi)) = p\log(\pi) + \log(\vphi(\pi)/\pi^p)
\quad \text{and} \quad
\ga(\log(\pi)) = \log(\pi) + \log(\ga(\pi)/\pi),
\]
the series $\log(\vphi(\pi)/\pi^p)$ and $\log(\ga(\pi)/\pi)$ being
convergent in $B^\dag_{\rig,\bbQ_p}$.  We associate to $D$ the modules
\[
D_\crys = D[t\inv]^{\Ga_K}
\quad \text{and} \quad
D_\st = D[\log(\pi),t\inv]^{\Ga_K}.
\]
These two modules are semilinear $\vphi$-modules over $F$, of
$F$-dimension $\leq \rank D$.  They are related via the so-called
monodromy operator $N$.  Namely, consider the unique
$B^\dag_{\rig,K}$-derivation $N \cn B^\dag_{\rig,K}[\log(\pi)] \to
B^\dag_{\rig,K}[\log(\pi)]$ satisfying $N(\log(\pi)) =
-\frac{p}{p-1}$.  It satisfies $N\vphi = p\vphi N$ and and commutes
with $\Ga_K$, and thus gives rise to an operator $N$ on $D_\st$ with
the property that $D_\crys = D_\st^{N=0}$.

We say that $D$ is {\it crystalline} (resp.\ {\it semistable}) if
$D_\crys$ (resp.\ $D_\st$) has the maximal $F$-dimension, namely
$\dim_F D_\crys = \rank D$ (resp.\ $\dim_F D_\st = \rank D$).  In
particular, $D$ is crystalline if and only if it is semistable and
$N=0$ on $D_\st$.  One can show that $D_\st \otimes_F K
\hookrightarrow D_\dR$, so that crystalline implies semistable and
semistable implies de~Rham.  We call $D$ {\it potentially crystalline}
(resp.\ {\it potentially semistable}) if there exists a finite
extension $L/K$ such that $D_L$ is crystalline (resp.\ semistable),
when considered as a $(\vphi,\Ga_L)$-module.  The following statement
is known as Berger's $p$-adic local monodromy theorem.

\begin{thm}[\cite{B1}]
Every de~Rham $(\vphi,\Ga_K)$-module is potentially semistable.
\end{thm}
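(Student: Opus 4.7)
The plan is to follow Berger's original strategy, which reduces the statement to Crew's conjecture on the quasi-unipotence of $p$-adic differential equations with Frobenius structure (proved independently by Andr\'e, Mebkhout, and Kedlaya).

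First, I would construct a connection on $D$ coming from the $\Ga_K$-action. For $\ga \in \Ga_K$ sufficiently close to $1$, the operator $\log(\ga)/\log(\chi_\cycl(\ga))$ converges and defines a $B^\dag_{\rig,K}$-compatible derivation $\nabla_D \cn D \to D$ (with respect to the natural derivation on $B^\dag_{\rig,K}$). This upgrades $D$ (and its localizations at $t$) to a $\vphi$-module with connection.

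Second, I would use the de~Rham hypothesis to exhibit a ``good model'' of the connection. Following Berger, one constructs a canonical $B^\dag_{\rig,K}$-lattice $N_\dR(D) \subset D[t\inv]$, characterized by the property that its image in $D_\dif$ coincides with $D_\dR \otimes_K K_\infty[\![t]\!]$; the de~Rham condition guarantees that this lattice has the correct rank. The point is to verify that $N_\dR(D)$ inherits commuting actions of $\vphi$ and $\nabla_D$ in such a way that the connection is \emph{regular} (i.e., has no pole at the zeros of $t$), so that $N_\dR(D)$ is a genuine $\vphi$-module with connection over the Robba ring, to which $p$-adic differential equation techniques apply.

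Third, I would invoke the $p$-adic local monodromy theorem for such $\vphi$-modules with connection: there exists a finite extension $L/K$ such that $N_\dR(D)_L$ becomes unipotent, i.e., admits a filtration whose graded pieces are trivialized by the connection. Unwinding this (using the slope theory of \cite{K} to control the Frobenius) produces enough $\nabla$-flat sections in $D_L[\log(\pi),t\inv]$ to compute $(D_L)_\st$ and show it attains the maximum possible $F_L$-dimension, establishing that $D_L$ is semistable. The main obstacle is the construction and analysis of $N_\dR(D)$: matching the $\Ga_K$-action on $D_\dif^+$ with its identification (in the de~Rham case) as $D_\dR \otimes_K K_\infty[\![t]\!]$ and then descending this datum back across the annulus to a $B^\dag_{\rig,K}$-lattice inside $D[t\inv]$ with a regular connection is the core technical content of the proof, and requires the interplay of the $\vphi$- and $\Ga_K$-structures to be fully exploited.
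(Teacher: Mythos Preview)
The paper does not supply its own proof of this theorem: it is simply quoted from \cite{B1} as part of the background review in \S\ref{sect-phigamma-monodromy}, with no argument given. Your outline is a faithful sketch of Berger's original proof in \cite{B1}---constructing the connection $\nabla$ from the infinitesimal $\Ga_K$-action, building the regular lattice $N_\dR(D)$ from the de~Rham hypothesis, and then applying the $p$-adic local monodromy theorem (Crew's conjecture, as proved by Andr\'e, Mebkhout, and Kedlaya) to obtain unipotence over a finite extension. So there is no discrepancy to report: you have reproduced the argument behind the citation, whereas the paper itself merely invokes it.
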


The upshot of this theorem is that, whereas in the last section
$D_\dR$ was merely a filtered $K$-vector space, now we may equip it
with much more structure.  Given a de~Rham $D$, let $L/K$ be finite
Galois such that $D_L$ is semistable.  Then $(D_L)_\st$ is a
$(\vphi,N)$-module over the maximal absolutely unramified subfield
$F_L$ of $L$, and $(D_L)_\st \otimes_{F_L} L = (D_L)_\dR$ is a
filtered $L$-vector space.  Essentially because these data arise via
restriction from $K$, they are naturally equipped with a semilinear
action of $\Gal(L/K)$ that commutes with $\vphi$ and $N$ and preserves
the filtration.  Such an object is called a {\it filtered
$(\vphi,N,\Gal(L/K))$-module over $K$}.

Given two extensions $L_i$ and filtered
$(\vphi,N,\Gal(L_i/K))$-modules $D_i$ (for $i=1,2$), we consider them
equivalent if there exists an extension $L$ containing the $L_i$ such
that the $D_i$ tensored up to $L$ are isomorphic.  When we consider
objects only up to this equivalence, we call them {\it filtered
$(\vphi,N,G_K)$-modules}, thinking of the $G_K$-action as being
through an unspecified finite quotient that determines the field of
definition of the underlying vector spaces.  We point out that if $D$
becomes semistable over both $L_1$ and $L_2$, then $(D_{L_1})_\st$ and
$(D_{L_2})_\st$ are equivalent.  We call this equivalence class
$D_\pst$.  (To avoid set-theoretic issues, one simply deals with
filtered $(\vphi,N,G_K)$-modules whose underlying $\vphi$-module is a
vector space over $F^\unr$, and underlying filtered vector space has
coefficients in $\ov{K}$, with the assumption that $G_K$ acts
discretely.)  The category of filtered $(\vphi,N,G_K)$-modules is
denoted $\bfM\bfF(\vphi,N,G_K)$.

Summarizing the $p$-adic monodromy theorem in terms of the above
language, if $D$ is de~Rham then $D_\pst$ determines a filtered
$(\vphi,N,G_K)$-module over $K$.  Following Fontaine, a filtered
$(\vphi,N,G_K)$-module $D_\pst$ is called {\it (weakly) admissible}
if, roughly, its Newton and Hodge polygons have the same endpoints,
and all its $(\vphi,N)$-stable submodules satisfy ``Newton on or above
Hodge''.  (See \cite[{\S}I.1]{B2} for details.)

\begin{thm}[\cite{CF,B2}]
The functor $D \mapsto D_\pst$ is an equivalence of categories:
\[
\bfM^\dR(\vphi,\Ga_K)_{/B^\dag_{\rig,K}}
\stackrel{\sim}{\longrightarrow}
\bfM\bfF(\vphi,N,G_K).
\]
A de~Rham $(\vphi,\Ga_K)$-module $D$ is \'etale if and only if
$D_\pst$ is (weakly) admissible.  It is potentially crystalline if and
only if $N=0$ on $D_\pst$.  It is semistable if and only if $D_\pst$
can be realized as a filtered $(\vphi,N,\Gal(K/K))$-module.
\end{thm}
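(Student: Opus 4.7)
The strategy is to reduce everything to the theorems of Colmez--Fontaine and Berger already in the literature, handling the $\Gal(L/K)$-action by a descent argument.

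First I would construct the functor $D \mapsto D_\pst$ and verify it is well-defined on the target category. Given de~Rham $D$, Berger's $p$-adic monodromy theorem furnishes a finite Galois $L/K$ with $D_L$ semistable; form $(D_L)_\st$, which by the discussion in \S\ref{sect-phigamma-monodromy} is a $(\vphi,N)$-module over $F_L$ with $(D_L)_\st \otimes_{F_L} L = (D_L)_\dR$ a filtered $L$-vector space, and inherits a semilinear $\Gal(L/K)$-action commuting with $\vphi,N$ and preserving the filtration since all structures are pulled back from $K$. Any two such choices $L_1,L_2$ become isomorphic after base change to a common extension $L \supseteq L_1,L_2$ (both tensor up to $(D_L)_\st$), so we obtain a well-defined object of $\bfM\bfF(\vphi,N,G_K)$.

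Full faithfulness follows by Galois descent: a morphism $D \to D'$ of de~Rham $(\vphi,\Ga_K)$-modules becomes a morphism of semistable $(\vphi,\Ga_L)$-modules over $B^\dag_{\rig,L}$ for large enough $L$, which, applying $(-)_\st$, yields a $\Gal(L/K)$-equivariant morphism of filtered $(\vphi,N)$-modules; conversely, a morphism of $D_\pst$'s can be lifted to a morphism of $(\vphi,\Ga_L)$-modules by the Colmez--Fontaine/Berger equivalence over $L$, and its $\Gal(L/K)$-equivariance then lets it descend to a $(\vphi,\Ga_K)$-module morphism. Essential surjectivity, which is the main obstacle, requires producing $D$ from a given filtered $(\vphi,N,G_K)$-module $D'$: fix a finite Galois $L/K$ over which $D'$ is defined, apply the Colmez--Fontaine theorem \cite{CF} in Berger's Robba-ring formulation \cite{B2} to $D'$ viewed as a filtered $(\vphi,N)$-module over $F_L$ relative to $L$ (forgetting the $\Gal(L/K)$-action) to obtain a semistable $(\vphi,\Ga_L)$-module $D_L$ over $B^\dag_{\rig,L}$, and then transport the $\Gal(L/K)$-action on $D'$ through the equivalence, which by functoriality promotes $D_L$ to a $(\vphi,\Ga_L)$-module with commuting semilinear $\Gal(L/K)$-action; since $B^\dag_{\rig,K} = (B^\dag_{\rig,L})^{\Gal(L/K)}$ and $B^\dag_{\rig,L}$ is a finite Galois extension of $B^\dag_{\rig,K}$ with Galois group $\Gal(L/K)$ acting semilinearly on $D_L$, faithfully flat descent produces the required $(\vphi,\Ga_K)$-module $D$ with $D_L \cong D \otimes_{B^\dag_{\rig,K}} B^\dag_{\rig,L}$ as structures, and $D_\pst = D'$ by construction.

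For the three characterizations: étale iff weakly admissible is immediate by composing Theorem \ref{thm-phigamma-equiv}, which identifies étale de~Rham $(\vphi,\Ga_K)$-modules with the de~Rham objects of ${\bf Rep}_\bbQp(G_K)$, with the classical Colmez--Fontaine theorem \cite{CF}, which identifies weakly admissible filtered $(\vphi,N,G_K)$-modules with potentially semistable Galois representations. Potentially crystalline iff $N=0$ on $D_\pst$: by definition $D$ is potentially crystalline iff some $D_L$ is crystalline, i.e.\ $(D_L)_\crys = (D_L)_\st$, which since $(D_L)_\crys = (D_L)_\st^{N=0}$ is equivalent to $N=0$ on $(D_L)_\st = D_\pst$. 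Semistable iff $D_\pst$ realizable over $\Gal(K/K) = \{1\}$: by definition $D$ is semistable iff $\dim_F D_\st = \rank D$, i.e.\ one may already take $L=K$ in the construction above, which is precisely the statement that $D_\pst$ has a representative with trivial Galois action. The technical heart is the descent step in essential surjectivity; I expect this to rest cleanly on \cite{B2}, which establishes just the semistable Robba-ring form of Colmez--Fontaine needed.
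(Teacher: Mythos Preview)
The paper does not supply a proof of this theorem at all: it is stated with attribution to \cite{CF,B2} and followed immediately by a discussion of the categories involved, with no proof environment. So there is nothing in the paper to compare your argument against; you have gone beyond the paper by sketching an actual proof.

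Your outline is broadly the right shape, but the descent step in essential surjectivity is where the real work lies, and you pass over it too quickly. The claim that $B^\dag_{\rig,K} = (B^\dag_{\rig,L})^{\Gal(L/K)}$ and that $B^\dag_{\rig,L}/B^\dag_{\rig,K}$ is a finite Galois ring extension with group $\Gal(L/K)$ is not immediate: the Robba ring $B^\dag_{\rig,L}$ is built from $L_\infty$ rather than $L$, and the relationship between $\Gal(L_\infty/K_\infty)$ and $\Gal(L/K)$ requires care (for instance, if $L \subset K_\infty$ then $L_\infty = K_\infty$ and the extension of Robba rings degenerates). One must also check that the semilinear $\Gal(L/K)$-action you transport via functoriality really is a descent datum in the sense required for faithfully flat descent of modules with $(\vphi,\Ga)$-structure. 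Berger's approach in \cite{B2} is somewhat different: he constructs the inverse functor directly on the level of $(\vphi,N,G_K)$-modules rather than first working over $L$ and descending, which sidesteps these issues. Your three characterizations at the end are fine and essentially tautological from the definitions, as you indicate.
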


The two equivalent categories above are {\it not} abelian categories.
In the first category, the coimage of a map $D \to D'$ is the
set-theoretic image, while the image is the $t$-saturation of this set
(i.e.\ the elements $x \in D'$ such that some $t^nx$ lies in the
set-theoretic image).  In the second category, the coimage and image
have the same underlying $(\vphi,N,G_K)$-module, but different
filtrations.  The filtration on the coimage is induced by the
surjection from $D$, and the filtration on the image is induced from
the inclusion into $D'$.  Thus, $t$-saturated $(\vphi,\Ga_K)$-stable
$B^\dag_{\rig,K}$-submodules of $D$ are in a natural correspondence
with subspaces of $D_\pst$ that are stable under the
$(\vphi,N,G_K)$-actions (considered as being equipped with the
filtration induced from $D_\pst$).  Furthermore, one can show that a
$t$-saturated $B^\dag_{\rig,K}$-submodule is actually a direct
summand, provided that it is $(\vphi,\Ga_K)$-stable.

Moreover, the proof in \cite{B2} explains the following facts.  For
simplicity, assume that $D$ is crystalline.  Consider $D$ and $D_\crys
\otimes_F B^\dag_{\rig,K}$ as a $B^\dag_{\rig,K}$-lattices in
$D[t\inv]$.  As one passes from the first to the second, one invokes
multiples by various $t^n$ (with $n \in \bbZ$) in order to trivialize
the $\Ga_K$-action on some basis.  But multiplying by $t^n$ shifts
$\vphi$-slopes upwards by $n$, and thus, as we change lattices, the
$\vphi$-slopes get dragged to new values.  But the powers of $t$
involved, which determine the amount of dragging, more directly
determine the weights of the Hodge filtration on $D_\crys$.  So, there
is a close (but complicated!)  connection between the Hodge--Tate
weights on $D_\crys$ and the {\it difference} between the
$\vphi$-slopes on $D$ and the $\vphi$-slopes on $D_\crys$.  In the
case where $D$ is a trivial $\Ga_K$-module, i.e.\ $D \approx
(B^\dag_{\rig,K})^{\oplus d}$ as a $\Ga_K$-module, one clearly sees
that $D$ is crystalline and that there is no change of lattice, so the
$\vphi$-slopes on $D$ {\it coincide} with the $\vphi$-slopes on
$D_\crys$.

\section{Local theory}\label{sect-local}

In this section we define triangulordinary representations, and prove
that they have many amenable properties.  We go on to define Selmer
groups of representations that are triangulordinary at $p$, and give
examples.

We continue with the notations set forth in \S\ref{sect-phigamma}.

\subsection{Triangulordinary $(\vphi,\Ga_K)$-modules and
  Selmer groups}\label{sect-definitions}

When in doubt, $D$ refers to an object in
$\bfM(\vphi,\Ga_K)_{/B^\dag_{\rig,K}}$.  In this subsection we set $L
= \wh{K^\unr}$, and remind the reader of the meaning of $D_L$: the
compositum $\ov{L}:=\ov{K}.L$ is an algebraic closure of $L$, and $G_L
= \Gal(\ov{L}/L) \stackrel{\sim}{\to} I_K \subset G_K$, the inertia
subgroup.  Thus, for $V \in {\bf Rep}_\bbQp(G_K)$, one has
$\bfD^\dag_\rig(V)_L = \bfD^\dag_\rig(V|_{I_K})$.

We say that $D$ is {\it triangulordinary} if there exists a
decreasing, separated and exhaustive, $(\vphi,\Ga_K)$-stable
filtration $F^* \subseteq D$ by $B^\dag_{\rig,K}$-direct summands,
such that each $(\Gr_F^n)_L$ is $\Ga_L$-isomorphic to
$(t^nB^\dag_{\rig,L})^{\oplus d_n}$.  The $d_n$ are called the
multiplicities of the weights $n$ in $D$; for example, $D$ has weights
$\geq 1$ if and only if $D = F^1$.  For an equivalent definitions see
Corollary \ref{cor-crys-HT-n} and \S\ref{sect-compare}, and for
examples see \S\ref{sect-local-ex}. We do not give these immediately,
because discussing them rigorously requires several tools.

Given $D$ and a triangulordinary filtration $F^*$, we put
\begin{equation}\label{eqn-tord-selmer-cond}
H^1_\tord(D) = H^1_\tord(D;F^*) = \ker\left[ H^1(D) \to
  H^1((D/F^1)_L) \right],
\end{equation}
and call it the {\it triangulordinary local condition}.  It is
possible that $D$ be triangulordinary with respect to more than one
filtration (see \S\ref{sect-local-ex}), hence the need for the
``$F^*$'' in the notation.  However, we will usually have a fixed
filtration in mind, and therefore we will usually drop it from sight.

Here is the main theorem of this section.

\begin{thm}\label{thm-local-main}
Let $D$ be triangulordinary with filtration $F^*$.  Then the following
claims hold.
\begin{enumerate}
\item $D$ is de~Rham (and moreover +de~Rham if and only if $F^1 = 0$),
  and even semistable.
\item Suppose for all $n \leq 0$ that $n-1$ is not a $\vphi$-slope on
  $\Gr_F^n$.  Then $H^1_\tord(D;F^*)$ coincides with $H^1_{g+}(D)$,
  defined in \S\ref{sect-galois-descent}.
\end{enumerate}
\end{thm}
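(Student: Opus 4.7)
The plan is to reduce both parts of the theorem to properties of the graded pieces $\Gr_F^n$ of the filtration $F^*$; each such $\Gr_F^n$ becomes a sum of copies of $t^n B^\dag_{\rig,L}$ (with cyclotomic $\Ga_L$-action) upon base change to $L = \wh{K^\unr}$. For part (1), this exhibits each $(\Gr_F^n)_L$ as crystalline over $L$: the trivialization $t^n \mapsto 1$ yields a basis of $(\Gr_F^n)_L[t\inv]$ on which $\Ga_L$ acts trivially, so $((\Gr_F^n)_L)_\crys$ has full $F_L$-dimension. Since the category of semistable $(\vphi,\Ga_L)$-modules is closed under extensions, an induction along the length of $F^*$ gives that $D_L$ is semistable over $L$. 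To descend back to $K$, I use that $L/K$ is unramified with $\Gal(L/K) = G_k$, which acts semilinearly on the unramified extension $F_L/F$; a Hilbert 90 argument yields $D_\st = ((D_L)_\st)^{G_k}$ of full $F$-dimension, so $D$ is semistable and \emph{a fortiori} de~Rham.

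For the +de~Rham assertion, applying the $\iota_n$-construction to $F^*$ produces a $\Ga_K$-stable filtration on $D_\dif^+$ whose graded pieces, after base change to $L_\infty[\![t]\!]$, are $(t^n L_\infty[\![t]\!])^{\oplus d_n}$. A direct computation using Tate's vanishing $H^0(\Ga_L, L_\infty(\chi_\cycl^j)) = 0$ for $j \neq 0$ shows that the $\Ga_L$-invariants (and hence, via $G_k$-descent, the $\Ga_K$-invariants) of the $n$-th graded piece contribute $d_n$ to $\dim_K D_\dR^+$ when $n \leq 0$ and contribute $0$ when $n \geq 1$. A dimension count then yields $\dim_K D_\dR^+ = \rank D$ if and only if all weights $n \geq 1$ are absent, i.e., $F^1 = 0$.

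For part (2), I consider the short exact sequence $0 \to F^1 \to D \to D/F^1 \to 0$ and its $L$-restriction, together with the associated long exact sequences in $(\vphi,\Ga_K)$-module cohomology (via the complexes of \S\ref{sect-phigamma-cohom}). Both $H^1_\tord$ and $H^1_{g+}$ are kernels of natural maps out of $H^1(D)$: $H^1_\tord$ to $H^1((D/F^1)_L)$ by definition, and $H^1_{g+}$ (per \S\ref{sect-galois-descent}) to a cohomology group computed via $D_\dif^+$. Using part (1) to access Berger's equivalence $D \mapsto D_\pst$, I build a commutative diagram linking the two targets and reduce the equality of kernels to matching the graded-piece contributions of each $\Gr_F^n$ with $n \leq 0$.

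The main obstacle is deploying the slope hypothesis. For each $n \leq 0$, the assumption that $n-1$ is not a $\vphi$-slope on $\Gr_F^n$ is precisely the genericity condition making a suitable $t$-twist of $\vphi - 1$ bijective on $(\Gr_F^n)_L$, which in turn forces the vanishing of the discrepancy between $H^1_\tord$ and $H^1_{g+}$ that would otherwise arise from exceptional slopes. With this vanishing in hand, a diagram chase identifies $H^1_\tord(D;F^*)$ with $H^1_{g+}(D)$.
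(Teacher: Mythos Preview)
There are two genuine gaps.

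\textbf{Part (1).} Your assertion that ``the category of semistable $(\vphi,\Ga_L)$-modules is closed under extensions'' is false as stated: already for $K=\bbQ_p$, the nontrivial class in $H^1(G_K,\bbQ_p(-1))$ gives an extension of crystalline objects that is not de~Rham, hence not semistable. The paper proceeds in the opposite order. One first proves that $D$ is de~Rham by analyzing $D_\dif^+$: using the vanishing $H^1(\Ga_K,(F^1)_\dif^+)=0$ (which holds because the weights there are $\geq 1$), one shows that the filtration on $D_\dif^+$ actually \emph{splits} as $\Ga_K$-modules, giving $D_\dif^+ \cong \bigoplus_n (\Gr_F^n)_\dif^+$; inverting $t$ then makes the de~Rham property transparent. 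Only \emph{after} de~Rham is established does one invoke \cite[Th\'eor\`eme~6.2]{B1}, which says precisely that a de~Rham object that is a successive extension of semistable pieces is itself semistable. Your ``dimension count'' for the $+$de~Rham statement also implicitly needs this splitting (or an equivalent $H^1$-vanishing step) to pass from the gradeds to $D_\dif^+$ itself.

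\textbf{Part (2).} Your proposed use of the slope hypothesis---``making a suitable $t$-twist of $\vphi-1$ bijective''---does not match the condition. Bijectivity of $\vphi-1$ is a slope~$0$ phenomenon, whereas the hypothesis excludes slope $-1$ on the weight-$0$ twist of each $\Gr_F^n$. The paper's argument is of a different nature. After reducing (via the $D_\dif^+$ splitting) to the case $F^1=0$, then by d\'evissage to a single $\Gr_F^n$, and then by a $t$-shift induction to $n=0$, one must show that every $+$de~Rham extension $E$ of $\mathbf{1}$ by $D$ splits. Since $E$ is then semistable, one works in $E_\st$: Dieudonn\'e--Manin over the algebraically closed residue field produces a $\vphi$-fixed lift $e\in E_\st$ of slope $0$, and the relation $N\vphi=p\vphi N$ forces $N(e)\in E_\st^{(-1)}$. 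The slope hypothesis says exactly that $E_\st^{(-1)}=D_\crys^{(-1)}=0$, whence $N=0$, so $E$ is crystalline; one then recovers the $(\vphi,\Ga_K)$-splitting from the $\vphi$-splitting of $E_\crys$. None of this is visible in your sketch, and without it the ``diagram chase'' you allude to cannot be completed.
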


We prove this theorem in \S\ref{sect-local-proof}; the intervening
sections involve preparatory material.

For the remainder of this subsection, let us break with the running
notation.  Let $K/\bbQ$ be a finite extension, and let $S$ be a finite
set of places of $K$ containing all primes above $p$ and $\infty$.
Write $K_S$ for a maximal extension of $K$ unramified outside $S$, and
$G_{K,S} = \Gal(K_S/K)$.  Choose algebraic closures $\ov{K}_v$ of
$K_v$, and write $G_v = \Gal(\ov{K}_v/K)$, for each finite place $v
\in S$; choose embeddings $K_S \hookrightarrow \ov{K}_v$, which amount
to maps $G_v \to G_{K,S}$.  Also, for $v \in S$ with $v \nmid
p\infty$, denote by $I_v \subset G_v$ the inertia subgroup, and write
$G_{\bbF_v} = G_v/I_v$.

Let $V$ be a finite-dimensional $\bbQ_p$-vector space equipped with a
continuous, linear action of $G_{K,S}$.  Assume that, for each place
$v$ of $K$ with $v \mid p$, $\bfD^\dag_\rig(V|_{G_v})$ is equipped
with a triangulordinary filtration $F_v^*$.  We define the {\it
triangulordinary local conditions} as above: they are the respective
subgroups $H^1_\tord(K_v,V)$ corresponding, under the identifications
$H^1(K_v,V) \cong H^1(\bfD^\dag_\rig(V|_{G_{K_v}})$, to the subgroups
$H^1_\tord(\bfD^\dag_\rig(V|_{G_{K_v}}))$.  Then, following the
customary pattern, we define the {\it triangulordinary Selmer group}
to be
\begin{equation}\label{eqn-tord-selmer}
H^1_\tord(K,V) = \ker\left[ H^1(G_{K,S},V) \to
\bigoplus_{\substack{v \in S\\ v \nmid p\infty}}
  \frac{H^1(K_v,V)}{H^1(G_{\bbF_v},V^{I_v})}
\oplus
\bigoplus_{\substack{v \in S,\\ v \mid p}}
  \frac{H^1(K_v,V)}{H^1_\tord(K_v,V)} \right].
\end{equation}
After proving Theorem \ref{thm-local-main}, we will see how this
definition generalizes Selmer groups defined by Greenberg, and agrees
with those defined by Bloch--Kato.

\subsection{Galois descent}\label{sect-galois-descent}

In this section and the next we show that the de~Rham property is
rather flexible: it is easy to equate the validity of this property
for one $(\vphi,\Ga_K)$-module to its validity for another one.  The
instance in this section concerns the ability to discern that $D$ is
de~Rham (resp.\ crystalline), given that the restriction $D_L$ of $D$
to some possibly large overfield $L \supseteq K$ has the same
property.  I suspect that these facts are known to the experts, but I
give precise statements and proofs because they do not appear in the
literature in the generality of possibly non-\'etale
$(\vphi,\Ga_K)$-modules.

By a {\it complete unramified extension} $L$ of $K$, we mean the
$p$-adic completion of an unramified (but possibly infinite) algebraic
extension of $K$.  Using an appropriate variant of the Witt vectors
formalism, such fields $L$ lie in a natural bijection with algebraic
extensions of $k$, and hence to closed subgroups $H$ of $G_k$.  When
$H$ is normal in $G_K$, we call $L$ {\it normal} and set $\Gal(L/K) =
G_k/H$.  For example, the maximal complete unramified extension of $K$
is $L = \wh{K^\unr}$, with $\Gal(L/K) = G_k$.

By a {\it complete discretely valued extension} $L$ of $K$, we mean a
finite extension of a complete unramified extension.  These are the
same as CDVFs into which $K$ embeds continuously, such that the
embedding induces an algebraic extension of residue fields.  The
complete discretely valued extensions $L$ of $K$ are in a natural
bijection with closed subgroups $H$ of $G_K$ for which $H \cap I_K$
has finite index in $I_K$.  If $H$ is normal in $G_K$, we call $L$
{\it normal} and we set $\Gal(L/K) = G_K/H$.  The class of complete
discretely valued extensions is closed under finite composita, but
does not admit a maximal element.

\begin{ppn}\label{ppn-derham-descent}
Let $D \in \bfM(\vphi,\Ga_K)_{/B^\dag_{\rig,K}}$, and let $L$ be a
complete discretely valued extension of $K$.  Then $\dim_K D_\dR^{(+)}
= \dim_L (D_L)_\dR^{(+)}$.
\end{ppn}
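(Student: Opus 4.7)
The plan is first to reduce to an abstract statement about semilinear $\Ga_K$-modules over $K_\infty[\![t]\!]$, and then handle the two building blocks of $L/K$ separately. The base-change compatibility
\[
(D_L)_\dif^{(+)} \;=\; D_\dif^{(+)} \otimes_{K_\infty[\![t]\!]} L_\infty[\![t]\!]
\]
(and its $K_\infty(\!(t)\!)$-analogue) follows from flat base change of Cherbonnier's lattice $D^r$ together with the compatibility of $\iota_n$ under $B^{\dag,r}_{\rig,K} \hookrightarrow B^{\dag,r}_{\rig,L}$ for $r$ sufficiently small. So it suffices to prove the following abstract statement: for any free finite-rank $K_\infty[\![t]\!]$-module $M$ with continuous semilinear $\Ga_K$-action, setting $M_L := M \otimes L_\infty[\![t]\!]$ with its natural $\Ga_L$-action, one has $\dim_L (M_L)^{\Ga_L} = \dim_K M^{\Ga_K}$. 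The inequality $\geq$ is immediate from the natural injection $M^{\Ga_K} \otimes_K L \hookrightarrow (M_L)^{\Ga_L}$ (using flat base change of the linear-independence statement $M^{\Ga_K} \otimes_K K_\infty[\![t]\!] \hookrightarrow M$, which itself follows from $K_\infty[\![t]\!]^{\Ga_K} = K$), so the substance is in the reverse inequality. I would handle this by decomposing $K \subseteq L_0 \subseteq L$, with $L_0$ the maximal complete unramified subextension and $L/L_0$ finite totally ramified, treating each factor separately.

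For $L/K$ finite, WLOG Galois: the compositum identity $L \cdot K_\infty = L_\infty$ yields the exact sequence $1 \to N \to \Gal(L_\infty/K) \to \Ga_K \to 1$ with $N = \Gal(L_\infty/K_\infty)$, together with $\Ga_L \cap N = 1$ and $\Gal(L_\infty/K)/\Ga_L \cong \Gal(L/K)$. Classical Galois descent applied to the finite Galois ring extension $L_\infty[\![t]\!]/K_\infty[\![t]\!]$ gives $(M_L)^N = M$; the two-step computation of $(M_L)^{\Gal(L_\infty/K)}$ then yields $((M_L)^{\Ga_L})^{\Gal(L/K)} = M^{\Ga_K}$. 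A second application of classical Galois descent for the finite extension $L/K$, applied to the $L$-vector space $(M_L)^{\Ga_L}$ with its semilinear $\Gal(L/K)$-action, closes this case.

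For $L/K$ complete unramified: after using the finite case for $K'/K$ to reduce to $K' = K$, the identity $L \cap K_\infty = K$ gives a product decomposition $\Gal(L_\infty/K) \cong G_k \times \Ga_K$, with $\Ga_L \cong \Ga_K$ and $\Gal(L_\infty/K_\infty) \cong G_k$ the two factors. The analogous two-step computation yields $((M_L)^{\Ga_L})^{G_k} = M^{\Ga_K}$. A linear-independence argument (using $L_\infty[\![t]\!]^{\Ga_L} = L$) bounds $(M_L)^{\Ga_L}$ as a finite-dimensional $L$-vector space of dimension $\leq \rank M$, and it carries a continuous semilinear $G_k$-action induced from the $(p,t)$-adic-continuous $G_k$-action on $L_\infty[\![t]\!]$. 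Continuous Galois descent for the profinite extension $\wh{K^\unr}/K$---namely the vanishing of $H^1_{\text{cts}}(G_k, \GL_d(L))$, the form of Hilbert~90 appropriate to the complete maximal unramified extension---applied to this space then yields $\dim_L (M_L)^{\Ga_L} = \dim_K M^{\Ga_K}$. The main obstacle lies here: one must carefully verify the continuity of the $G_k$-action on $(M_L)^{\Ga_L}$ in a topology in which continuous Hilbert~90 is known to apply, and correctly invoke the descent for the profinite (rather than finite) Galois extension $\wh{K^\unr}/K$.
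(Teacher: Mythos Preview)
Your proposal is correct and follows essentially the same strategy as the paper: reduce to the two building blocks (finite and complete unramified), reduce the unramified case to $K=K'$, and apply Hilbert~90 in each case. The only notable organizational difference is in the finite Galois case: the paper inserts the intermediate field $L_0 = L\cap K_\infty$ and treats $L/L_0$ (linearly disjoint from $K_\infty$) and $L_0/K$ (contained in $K_\infty$) separately, whereas you package both steps into a single two-step invariants computation for the group $\Gal(L_\infty/K)$ via its two normal subgroups $N=\Gal(L_\infty/K_\infty)$ and $\Ga_L$. Your packaging is a touch slicker; the paper's has the advantage of making explicit why the $\Ga_K$-action on $(D_{L_0})_\dif^{(+)}$ exists when $L_0\subset K_\infty$ (namely, $(D_{L_0})_\dif^{(+)}=D_\dif^{(+)}$ as sets with the restricted action), which in your language is hidden in the fact that $N$ acts trivially on the $M$-factor of $M_L$. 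Both arguments rely on the same enlargement trick (implicit in your ``WLOG Galois'') and the same continuous form of Hilbert~90 for $\wh{K^\unr}/K$; the paper is equally brief there, describing it as ``obtained by limits and d\'evissage.''
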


\begin{rem}
In the down-to-earth terms of a Galois representation $V$, the
proposition says that the de~Rham periods of $V$ essentially only
depend on the action upon $V$ of an arbitrarily small finite index
subgroup of the inertia group $I_K \subseteq G_K$.
\end{rem}

\begin{proof}
It is clear that $D_\dR^{(+)} \otimes_K L \hookrightarrow
(D_L)_\dR^{(+)}$, which shows the inequality $\leq$.  This proof
consists of showing the reverse inequality.  Notice that, for any
particular $L$, and any complete discretely valued extension $L'/L$,
we know the inequality $\leq$ for $L'/L$, and if we know $\geq$ for
the extension $L'/K$, then we know as a result the inequality $\geq$
for $L/K$ and $L'/L$.  Therefore, it suffices to prove the proposition
with $L'$ in place of $L$, i.e.\ it never hurts to enlarge the $L$ in
question.  In particular, by passing to the (completed) normal
closure, we may assume that $L/K$ is normal.

We first establish the proposition in the case when $L/K$ is {\it
finite}.  The idea is to harness the ability to enlarge $L$ in order
to really only treat two cases: when $L$ and $K_\infty$ are linearly
disjoint over $K$, and when $L \subset K_\infty$.  Let $L_0 = L \cap
K_\infty$, so that $L$ and $K_\infty$ are linearly disjoint over $L_0$
(because $L$ and $K_\infty$ are both normal over $K$).

We treat the extension $L/L_0$ first.  We have
\[
(D_L)_\dif^{(+)} = (D_{L_0})_\dif^{(+)}
  \otimes_{L_{0,\infty}[\![t]\!]}  L_\infty[\![t]\!].
\]
Notice that $\Gal(L/L_0)$ acts only on the right hand factor of this
expression, and it commutes with the $\Ga_L$-action (since $L$ and
$K_\infty$ are linearly disjoint over $L_0$).  Thus, the
$\Gal(L/L_0)$-invariants of $(D_L)_\dif^{(+)}$ are the $\Ga_L =
\Ga_{L_0}$-module $(D_{L_0})_\dif^{(+)}$.  In other words,
$(D_{L_0})_\dR^{(+)} = ((D_L)_\dR^{(+)})^{\Gal(L/L_0)}$.  But
$(D_L)_\dR^{(+)}$ is an $L$-vector space of dimension $\rank D$
equipped with a continuous, semilinear action of $\Gal(L/L_0)$, and
Hilbert's Theorem 90 for finite, Galois field extensions implies that
all finite-dimensional, semilinear $\Gal(L/L_0)$-modules over $L$ are
trivial.  Thus, $(D_L)_\dR^{(+)}$ admits a basis of elements fixed
under $\Gal(L/L_0)$, which shows that $(D_{L_0})_\dR^{(+)}$ has the
same dimension as dimension $(D_L)_\dR^{(+)}$, and we have $\geq$ in
this case.

We may now assume that $L = L_0$, so that $L$ is contained in
$K_\infty$.  But now we have $(D_L)_\dif^{(+)} = D_\dif^{(+)}$ {\it as
$K_\infty[\![t]\!]$-modules}, and the $\Ga_L$-action on the left hand
side obtained by restricting the $\Ga_K$-action on the right hand
side.  Since $\Ga_K$ is abelian, its action on $(D_L)_\dif^{(+)}$
commutes with the $\Ga_L$-action and induces a semilinear $\Ga_K/\Ga_L
= \Gal(L/K)$-action over $L$ on $D_\dif^{(+)}$.  We again apply
Hilbert's Theorem 90 to deduce the desired inequality.

Now we turn to the infinite case, and make use of the proposition in
the finite case to simplify things.  Suppose we are given a tower $K
\subseteq L_0 \subseteq L$, with $L_0/K$ complete unramified and
$L/L_0$ finite.  Applying the finite case to $L/L_0$, we see that we
have equality for $L_0/K$ if and only if we have equality for $L/K$.
Thus we are reduced to proving the proposition when $L = L_0$, so that
$L/K$ is complete unramified.

Next consider the extension $K'/K$.  Since it is finite unramified,
the extension $(K'.L)/L$ is also finite unramified.  By the finite
case of the proposition, we have equality for the extension
$(K'.L)/L$, so we are reduced to the case where $L$ contains $K'$.
Considering the tower $K \subseteq K' \subseteq L$, we have equality
for $L/K$ if and only if we have equality for $L/K'$.  Thus we can
assume that $K = K'$ and $L/K$ is complete unramified.

Note that $K_\infty/K$ is now totally ramified, and hence also
linearly disjoint with (any algebraic subextension of) $L/K$.  This
implies the following facts.  First, $\Ga_L = \Ga_K$.  Moreover, the
actions of $\Ga_L$ and $\Gal(L/K)$ on $L_\infty$ commute with one
another.  Finally, we infer that $L_\infty/L$ is totally ramified, or,
equivalently, that $L' = L$ (the left hand side being the maximal
unramified extension of $L$ in $L_\infty$).

Consider the module $(D_L)_\dif^{(+)}$, which can be written as
$D_\dif^{(+)} \otimes_{K_\infty[\![t]\!]} L_\infty[\![t]\!]$.  Notice
that $\Gal(L/K)$ acts only on the right hand factor of this expression
for $(D_L)_\dif^{(+)}$, and it commutes with the $\Ga_L$-action.
Thus, the $\Gal(L/K)$-invariants of $(D_L)_\dif^{(+)}$ are the
$\Ga_L=\Ga_K$-module $D_\dif^{(+)}$.  In other words, $D_\dR^{(+)} =
((D_L)_\dR^{(+)})^{\Gal(L/K)}$.  So, $(D_L)_\dR^{(+)}$ is an
$L$-vector space with a continuous, semilinear action of $\Gal(L/K)$.
Invoking Hilbert's Theorem 90 for complete unramified extensions
(obtained by limits and d\'evissage from the traditional theorem for
$k_L/k$), we see that $(D_L)_\dR^{(+)}$ admits a basis of
$\Gal(L/K)$-invariants.  Thus we have equality for $L/K$, as was
desired.
\end{proof}

\begin{cor}\label{cor-derham-descent}
Let $D$ be a $(\vphi,\Ga_K)$-module, and let $L/K$ be an extension, as
in the preceding theorem.  Then $D$ is (+)de~Rham if and only if $D_L$
is (+)de~Rham (the latter considered as a $(\vphi,\Ga_L)$-module).
\end{cor}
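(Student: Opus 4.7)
The plan is to derive this corollary as an essentially formal consequence of Proposition \ref{ppn-derham-descent}, the bulk of the work already having been carried out in the proof of that proposition.

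First I would recall the definition: $D$ is (+)de~Rham precisely when $\dim_K D_\dR^{(+)} = \rank_{B^\dag_{\rig,K}} D$, with the analogous equality over $L$ serving as the definition for $D_L$. So the corollary amounts to comparing these two numerical criteria.

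Next, I would observe that the rank is manifestly invariant under the base change $D \mapsto D_L = D \otimes_{B^\dag_{\rig,K}} B^\dag_{\rig,L}$: since $D$ is free of finite rank over $B^\dag_{\rig,K}$, the tensor product $D_L$ is free of the same rank over $B^\dag_{\rig,L}$. Combining this with the equality $\dim_K D_\dR^{(+)} = \dim_L (D_L)_\dR^{(+)}$ furnished by Proposition \ref{ppn-derham-descent}, one sees at once that $\dim_K D_\dR^{(+)} = \rank_{B^\dag_{\rig,K}} D$ holds if and only if $\dim_L (D_L)_\dR^{(+)} = \rank_{B^\dag_{\rig,L}} D_L$, which is the desired equivalence.

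Since this deduction is purely formal, there is no real obstacle here; all the substance resides in the preceding proposition, whose proof already handled the two genuinely delicate cases (Galois descent via Hilbert 90 for finite extensions, and a limit/d\'evissage argument for the complete unramified case). One might remark in passing that the same argument works verbatim for any other condition on $(\vphi,\Ga_K)$-modules whose validity is measured by the coincidence of $\rank D$ with the dimension of a $K$-vector space of invariants that is preserved under extension of scalars in the sense of the proposition.
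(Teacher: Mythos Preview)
Your argument is correct and is essentially identical to the paper's own proof: both simply combine the definition of (+)de~Rham with the equality $\dim_K D_\dR^{(+)} = \dim_L (D_L)_\dR^{(+)}$ from Proposition~\ref{ppn-derham-descent} and the evident preservation of rank under base change. The paper's version is terser (a single sentence), but the content is the same.
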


\begin{proof}
By the theorem, $\dim_K D_\dR^{(+)} = \rank D$ if and only if $\dim_L
(D_L)_\dR^{(+)} = \rank D$.
\end{proof}

Our main use of the proposition is in the case of extension classes.
Let us describe how this occurs.  Because the cohomology groups
$H^*(D)$ defined in \ref{sect-phigamma-cohom} coincide with Yoneda
groups, to every $c \in H^1(D)$ there corresponds a class of
extensions
\[
0 \to D \to E_c \to {\bf 1} \to 0.
\]
Since the functor $D \mapsto D_\dif^{(+)}$ corresponds to changing the
base rings of finite free objects (over Bezout domains), it preserves
short exact sequences.  Therefore, we can hit the exact sequence above
with this functor to obtain an exact sequence of $\Ga_K$-modules over
$K_\infty[\![t]\!]$ or $K_\infty(\!(t)\!)$.  The we obtain thus a map
$H^1(D) \to H^1(\Ga_K,D_\dif^{(+)})$ by $[E_c] \mapsto
[(E_c)_\dif^{(+)}]$.

Also, if $L/K$ is an extension as in the proposition
above, then $[E_c] \mapsto [(E_c)_L]$ defines a map denoted $\alpha_L
\cn H^1(D) \to H^1(D_L)$.

The {\it Bloch--Kato ``g(+)'' local condition} is the subgroup of
$H^1(D)$ determined by
\[
H^1_{g(+)}(D) = \ker\left[ H^1(D) \to H^1(\Ga_K,D_\dif^{(+)}) \right].
\]
Proposition \ref{ppn-derham-descent} now gives the following descent
result for these subgroups.

\begin{cor}\label{cor-BK-descent}
For any $D$ and $L/K$ as in the statement of Proposition
\ref{ppn-derham-descent}, one has $H^1_{g(+)}(D) = \alpha_L\inv
H^1_{g(+)}(D_L)$, where $\alpha_L$ is defined above.

In particular, taking $L = \wh{K^\unr}$, we see that $H^1_\unr(D) :=
\ker \alpha_{\wh{K^\unr}} \subseteq H^1_{g(+)}(D)$.
\end{cor}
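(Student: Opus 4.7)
The plan is to convert the $H^1_{g(+)}$ condition into a purely numerical statement about the de~Rham dimension of the extension $E_c$, and then transfer this statement from $K$ to $L$ using Proposition \ref{ppn-derham-descent} applied twice.

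Concretely, given $c \in H^1(D)$ represented by an extension $0 \to D \to E_c \to {\bf 1} \to 0$, I would first apply the functor $(-)_\dif^{(+)}$. Since this functor arises by base change of finite free modules along the maps $\iota_n$, it preserves short exact sequences. Taking continuous $\Ga_K$-cohomology and using the standard identification $(K_\infty[\![t]\!])^{\Ga_K} = K$ (which follows from $K_\infty^{\Ga_K} = K$ together with the Hodge--Tate vanishing $K_\infty(-n)^{\Ga_K} = 0$ for $n \neq 0$), I obtain a long exact sequence
\[
0 \to D_\dR^{(+)} \to (E_c)_\dR^{(+)} \to K \xrightarrow{\partial} H^1(\Ga_K, D_\dif^{(+)}),
\]
whose connecting map satisfies $\partial(1) = [(E_c)_\dif^{(+)}]$. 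Therefore $c \in H^1_{g(+)}(D)$ if and only if $\partial = 0$, equivalently $\dim_K (E_c)_\dR^{(+)} = \dim_K D_\dR^{(+)} + 1$.

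Next I would apply Proposition \ref{ppn-derham-descent} separately to $D$ and to $E_c$, giving the equalities $\dim_K D_\dR^{(+)} = \dim_L (D_L)_\dR^{(+)}$ and $\dim_K (E_c)_\dR^{(+)} = \dim_L ((E_c)_L)_\dR^{(+)}$. The dimension criterion for $c$ at level $K$ therefore becomes, term by term, the analogous criterion for $\alpha_L(c)$ at level $L$, which is precisely $\alpha_L(c) \in H^1_{g(+)}(D_L)$. This yields the main equality $H^1_{g(+)}(D) = \alpha_L^{-1} H^1_{g(+)}(D_L)$.

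For the ``in particular'' clause with $L = \wh{K^\unr}$: if $c \in \ker \alpha_L$ then $(E_c)_L$ splits as $D_L \oplus {\bf 1}$ in $\bfM(\vphi,\Ga_L)_{/B^\dag_{\rig,L}}$, so it is trivially (+)de~Rham, whence $\alpha_L(c) \in H^1_{g(+)}(D_L)$ and therefore $c \in H^1_{g(+)}(D)$ by the first part. I anticipate no real obstacle here: the numerical reformulation of $H^1_{g(+)}$ is a formal manipulation of the long exact sequence, and all the substantive work has already been carried out in Proposition \ref{ppn-derham-descent}.
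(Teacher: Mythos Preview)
Your proposal is correct and follows essentially the same approach as the paper: both convert membership in $H^1_{g(+)}$ into the numerical criterion $\dim_K (E_c)_\dR^{(+)} = \dim_K D_\dR^{(+)} + 1$ (the paper phrases this as the existence of a $\Ga_K$-fixed vector of $(E_c)_\dif^{(+)}$ not lying in $D_\dR^{(+)}$, which is your long exact sequence argument in different words), and then apply Proposition~\ref{ppn-derham-descent} to $D$ and $E_c$ to transfer the criterion between $K$ and $L$. Your treatment of the ``in particular'' clause is also fine, though slightly more than needed: once $\alpha_L(c)=0$ it lies in any subgroup of $H^1(D_L)$, so the main equality gives $c \in H^1_{g(+)}(D)$ immediately.
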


\begin{proof}
A $\Ga_K$-fixed vector of $(E_c)_\dR^{(+)}$ not belonging to the
subspace $D_\dR^{(+)}$ is the same thing as a $\Ga_K$-equivariant
splitting of the map $(E_c)_\dif^{(+)} \to {\bf 1}_\dif^{(+)}$.  Thus,
$(E_c)_\dif^{(+)}$ is $\Ga_K$-split if and only if $\dim_K
(E_c)_\dR^{(+)} = \dim_K D_\dR^{(+)} + 1$.  By the theorem, this holds
if and only if the corresponding claim holds with $K$, $D$, and $E_c$
replaced by $L$, $D_L$, and $(E_c)_L$, respectively.  Thus
$(E_c)_\dif^{(+)}$ is $\Ga_K$-split if and only if
$((E_c)_L)_\dif^{(+)}$ is $\Ga_L$-split.  In other words, we have $c
\in H^1_{g(+)}(D)$ if and only if $\alpha_L(c) \in H^1_{g(+)}(D_L)$,
as was desired.
\end{proof}

Suppose that $D$ is (+)de~Rham, and $c \in H^1(D)$.  We see from the
above proof that $(E_c)_\dif^{(+)}$ is $\Ga_K$-split if and only if
\[
\dim_K (E_c)_\dR^{(+)} = \dim_K D_\dR^{(+)} + 1 = \rank D + 1 = \rank
E,
\]
which occurs if and only if $E_c$ is itself (+)de~Rham.  Thus, in this
case, the Bloch--Kato ``g(+)'' local condition can be interpreted as
the subgroup of $H^1(D)$ determined by
\[
H^1_{g(+)}(D) = \{c \in H^1(D) \mid E_c \text{ is (+)de~Rham}\}.
\]

We also prove a descent result for the crystalline property.  Notice
that it includes the case of finite unramified extensions $L/K$.  For
a complete discretely valued extension $L/K$, we write $F_L$ for the
maximal absolutely unramified subfield of $L$, i.e.\ the field that
would be called $F$ if we replaced $K$ with $L$.

\begin{ppn}\label{ppn-crys-descent}
Let $D \in \bfM(\vphi,\Ga_K)_{/B^\dag_{\rig,K}}$, and let $L/K$ be a
complete unramified extension.  Then $\dim_F D_\crys = \dim_{F_L}
(D_L)_\crys$.  In particular, $D$ is crystalline if and only if $D_L$
is.
\end{ppn}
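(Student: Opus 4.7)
The plan is to mirror the proof of Proposition \ref{ppn-derham-descent}, now working with $\Ga_K$-invariants of $D[t\inv]$ rather than of $D_\dif^{(+)}$, and exploiting that $L/K$ being unramified places $L$ and $K_\infty$ in favorable linear disjointness with respect to $K'$. The inclusion $D_\crys \otimes_F F_L \hookrightarrow (D_L)_\crys$ is immediate from the definitions, giving $\dim_F D_\crys \leq \dim_{F_L}(D_L)_\crys$; the content is the reverse inequality. By the same transitivity trick used at the start of the proof of Proposition \ref{ppn-derham-descent}, I may freely enlarge $L$, so I assume $L/K$ is normal and moreover that $L$ contains $K'$ (the latter achieved by adjoining $K'$, which is finite unramified over $K$ and harmless).

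With $L \supseteq K'$ in hand, $L/K'$ is complete unramified while $K_\infty/K'$ is totally ramified, so $L$ and $K_\infty$ are linearly disjoint over $K'$. Consequently the compositum $L_\infty = L \cdot K_\infty$ is totally ramified over $L$, the restriction map $\Gal(L_\infty/K_\infty) \to \Gal(L/K')$ is an isomorphism identifying $\Ga_L$ with $\Ga_{K'}$, and the resulting $\Gal(L/K')$-action on $B^\dag_{\rig,L}$ commutes with $\Ga_L$ and $\vphi$, still after inverting $t$. Since $(B^\dag_{\rig,L}[t\inv])^{\Gal(L/K')} = B^\dag_{\rig,K'}[t\inv]$, taking $\Gal(L/K')$-invariants of $(D_L)_\crys$ recovers $(D_{K'})_\crys$. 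As $(D_L)_\crys$ is a finite-dimensional $F_L$-vector space with a continuous, semilinear $\Gal(L/K')$-action, Hilbert's Theorem 90 (in its finite Galois form when $L/K'$ is finite, and via the same d\'evissage argument used at the end of the proof of Proposition \ref{ppn-derham-descent} in the infinite case) yields
\[
\dim_{F_{K'}}(D_{K'})_\crys = \dim_{F_L}(D_L)_\crys.
\]

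It remains to treat the finite unramified extension $K'/K$. Here $K' \subseteq K_\infty$, so the quotient $\Ga_K/\Ga_{K'} = \Gal(K'/K)$ acts semilinearly on $(D_{K'})_\crys$ as an $F_{K'}$-vector space, commuting with the $\Ga_{K'}$-action that was used to form these invariants; the $\Gal(K'/K)$-fixed part recovers $D_\crys$. Hilbert's Theorem 90 for the finite Galois extension $F_{K'}/F$ gives $\dim_F D_\crys = \dim_{F_{K'}}(D_{K'})_\crys$, and chaining the two displayed equalities proves the proposition. The main subtlety will be verifying that all actions in sight genuinely commute under the identification $\Ga_L = \Ga_{K'}$ and checking that $(B^\dag_{\rig,L}[t\inv])^{\Gal(L/K')} = B^\dag_{\rig,K'}[t\inv]$ (which rests on the possibility of choosing $\pi_L = \pi_{K'}$ because $L/K'$ is unramified); once this linear disjointness picture is set up, the two applications of Hilbert 90 proceed exactly as in the de~Rham case.
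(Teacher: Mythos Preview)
Your proposal is correct and follows essentially the same route as the paper: after the easy inclusion giving $\leq$, you enlarge $L$ to contain $K'$ and then handle the two steps $L/K'$ (totally ramified $K_\infty/K'$ disjoint from unramified $L/K'$, so $\Ga_L=\Ga_{K'}$ and Hilbert 90 over $F_L/F'$ applies) and $K'/K$ (using $\Ga_K/\Ga_{K'}=\Gal(F'/F)$ and Hilbert 90 over $F'/F$) separately, exactly as the paper does. The only cosmetic difference is that the paper phrases the reduction as ``treat the cases $K=K'$ and $L=K'$ independently'' rather than as a tower, and it leaves implicit the identity $(B^\dag_{\rig,L})^{\Gal(L/K')}=B^\dag_{\rig,K'}$ that you rightly flag.
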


\begin{proof}
Since $D_\crys \otimes_F F_L \hookrightarrow (D_L)_\crys$, we show the
inequality $\geq$.

We may make reductions just as in the proof of Proposition
\ref{ppn-derham-descent}; namely, it suffices to assume that $L$ is
Galois over $K$ and contains $K'$, and to just treat independently the
cases where $K=K'$ and $L=K'$.

Assume $L=K'$.  Then since $(K')' = K'$ (see
\S\ref{sect-phigamma-rings}), one has $D_{K'} = D$ {\it as sets}, and
we see that
\[
(D_{K'})_\crys = D_{K'}[t\inv]^{\Ga_{K'}} = D[t\inv]^{\Ga_{K'}}
\]
is a semilinear $\Ga_K/\Ga_{K'} = \Gal(F'/F)$-module over $F'$
satisfying $(D_{K'})_\crys^{\Ga_K/\Ga_{K'}} = D_\crys$.  It must be
trivial, by Hilbert's Theorem 90, and hence $D_\crys$ has the desired
$F$-dimension.

Now assume $K=K'$.  Since $K_\infty/K$ is totally ramified, and $L/K$
is unramified, the two extensions are linearly disjoint.  In
particular, $\Ga_L = \Ga_K$ and $L' = L$.  We have
\[
D_L[t\inv] = D[t\inv] \otimes_{B^\dag_{\rig,K}} B^\dag_{\rig,L},
\]
and by linear disjointness the $\Gal(L/K)$-action on the right hand
factor commutes with the $\Ga_K$-action on the tensor product.  This,
combined with the fact that $\Ga_L=\Ga_K$, shows that
$(D_L)_\crys^{\Gal(L/K)} = D_\crys$.  We know that $(D_L)_\crys$ is a
semilinear $\Gal(L/K) = \Gal(F_L/F)$-module over $F_L$.  Applying
Hilbert's Theorem 90, it admits a basis of invariants, and hence
$D_\crys$ has the desired rank.
\end{proof}

\begin{cor}\label{cor-crys-HT-n}
Write $L = \wh{K^\unr}$.  For a $(\vphi,\Ga_K)$-module $D$, the
following claims are equivalent:
\begin{itemize}
\item $D$ is $\Ga_K$-isomorphic to $(t^nB^\dag_{\rig,K})^{\oplus d}$.
\item $D_L$ is $\Ga_L$-isomorphic to $(t^nB^\dag_{\rig,L})^{\oplus
  d}$.
\item $D$ is crystalline, and all its Hodge--Tate weights equal $n$.
\end{itemize}
\end{cor}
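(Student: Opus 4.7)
My plan is to prove $(1) \iff (3)$ first and then deduce $(2) \iff (3)$ by descent.

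For $(1) \iff (3)$, I would first reduce to the case $n = 0$ by twisting. Set $D'' := t^{-n} D \subseteq D[t^{-1}]$. Since $t$ has $\ga$-weight $\chi_\cycl(\ga)$, the $\Ga_K$-action on $D''$ differs from that on $D$ by a factor of $\chi_\cycl^{-n}$, and one checks $D \cong (t^n B^\dag_{\rig,K})^{\oplus d}$ as $\Ga_K$-modules iff $D'' \cong (B^\dag_{\rig,K})^{\oplus d}$; likewise $(D'')_\crys = D_\crys$ as subsets of $D[t^{-1}] = D''[t^{-1}]$, and the Hodge filtration on $(D'')_\dR$ is that of $D_\dR$ shifted by $-n$, so $D$ is crystalline with all HT weights $n$ iff $D''$ is crystalline with all HT weights $0$. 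It thus suffices to treat $n = 0$. The forward direction there is routine: any $\Ga_K$-trivial $D''$ has $(D'')_\crys$ spanning $D''$ over $B^\dag_{\rig,K}$, crystallinity follows, and the Hodge filtration is visibly trivial. The reverse direction is exactly what the end-of-\S\ref{sect-phigamma-monodromy} discussion asserts: for crystalline $D''$ the canonical map $(D'')_\crys \otimes_F B^\dag_{\rig,K} \to D''[t^{-1}]$ is a $(\vphi,\Ga_K)$-isomorphism after inverting $t$, and the $t$-power corrections needed to make it land isomorphically in the $B^\dag_{\rig,K}$-lattice $D''$ are read off from the Hodge filtration. When that filtration is trivial, no corrections are needed and the map $(D'')_\crys \otimes_F B^\dag_{\rig,K} \xrightarrow{\sim} D''$ is an isomorphism of $B^\dag_{\rig,K}$-modules, exhibiting $D'' \cong (B^\dag_{\rig,K})^{\oplus d}$ as a $\Ga_K$-module.

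For $(2) \iff (3)$, apply the equivalence $(1) \iff (3)$ just established to $D_L$ regarded as a $(\vphi,\Ga_L)$-module over $B^\dag_{\rig,L}$: condition $(2)$ then reads as $D_L$ being crystalline with all HT weights $n$. The remaining task is descent from $L$ to $K$. Crystallinity of $D$ from crystallinity of $D_L$ is exactly Proposition \ref{ppn-crys-descent}. For the HT weights, Proposition \ref{ppn-derham-descent} gives $\dim_K D_\dR = \dim_L (D_L)_\dR$; its Hilbert-90 proof applied to $(D_L)_\dif^+$ respects the $t$-adic filtration, so it upgrades to $\Fil^i D_\dR \otimes_K L = \Fil^i (D_L)_\dR$ for every $i$, and the HT weights of $D$ coincide with those of $D_L$.

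The main obstacle will be the $n = 0$ crystalline-to-trivial direction: one must extract from the informal "lattice dragging" picture of \S\ref{sect-phigamma-monodromy} the sharp claim that the canonical map $(D'')_\crys \otimes_F B^\dag_{\rig,K} \to D''$ is an isomorphism of $B^\dag_{\rig,K}$-modules exactly when the Hodge filtration is trivial. This is implicit in the \cite{CF,B2} equivalence between de~Rham $(\vphi,\Ga_K)$-modules and filtered $(\vphi,N,G_K)$-modules, specialized to the crystalline single-HT-weight case.
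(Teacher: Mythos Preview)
Your proof is correct and uses the same ingredients as the paper: Proposition~\ref{ppn-crys-descent} for descent of crystallinity, and the inverse construction of \cite[\S II]{B2} for the implication $(3)\Rightarrow(1)$. The organization differs only superficially---the paper runs the cycle $(1)\Rightarrow(2)\Rightarrow(3)\Rightarrow(1)$ while you prove $(1)\Leftrightarrow(3)$ directly and then descend $(2)\Leftrightarrow(3)$---and your explicit invocation of the filtered version of Proposition~\ref{ppn-derham-descent} for the Hodge--Tate weights is in fact more careful than the paper's one-line assertion about $D_\dif^+$.
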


\begin{proof}
The first condition clearly implies the second.

Assuming the second condition, $D_L$ is crystalline (since
$(D_L)_\crys = (t^{-n}D_L)^{\Ga_L}$ is clearly large enough), whence
Proposition \ref{ppn-crys-descent} shows that $D$ is crystalline.  And
$\bfD_\dif^+(D) = (t^nK_\infty[\![t]\!])^{\oplus d}$, showing that the
Hodge--Tate weights are all $n$.

Now assume that the third condition holds.  The fact that $D$ is
$\Ga_K$-isomorphic to $(t^nB^\dag_{\rig,K})^{\oplus d}$ results
directly from a check of the construction $D_\pst \mapsto D$ given in
\cite[\S II]{B2}.
\end{proof}

This corollary allows us to restate the condition that a filtration
$F^* \subseteq D$ be triangulordinary.  The hypothesis becomes: each
$\Gr_F^n$ is crystalline, with all Hodge--Tate weights equal to $n$.

\subsection{Irrelevance of $\vphi$-structure}
\label{sect-local-phi}

Next we state and prove a precise version of Remark \ref{rem-moral}.

\begin{ppn}\label{ppn-phi-irr}
The $\Ga_K$-isomorphism class of $D_\dif^+$ does not depend on which
$\vphi$-structure $D$ is equipped with (although the existence of
$\vphi$ is necessary to define $D_\dif^+$).  The same claim holds for
$D_\dif$.
\end{ppn}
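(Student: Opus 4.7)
The plan is to compare the two constructions of $D_\dif^+$ arising from two $\vphi$-structures $\vphi_1, \vphi_2$ on the same underlying $B^\dag_{\rig,K}$-module with the same $\Ga_K$-action. For each $i$ I would apply Cherbonnier's lemma to produce a $B^{\dag,r}_{\rig,K}$-lattice $D^r_{(i)} \subset D$, valid for all sufficiently small $r > 0$.  The uniqueness part of Cherbonnier's lemma, combined with the commutation of $\vphi_i$ and $\Ga_K$, forces each $D^r_{(i)}$ to be $\Ga_K$-stable.  Fixing bases of $D^r_{(1)}$ and $D^r_{(2)}$, let $M \in \GL_d(B^\dag_{\rig,K})$ (where $d = \rank D$) denote the change-of-basis matrix from the first basis to the second, and let $G_i(\ga) \in M_d(B^{\dag,r}_{\rig,K})$ encode the $\Ga_K$-action in the $i$th basis. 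A direct computation gives the semilinear conjugation identity
\[
G_2(\ga) \;=\; M^{-1}\,G_1(\ga)\,\ga(M).
\]

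Next, I would shrink $r$ so that both $M$ and $M^{-1}$ have entries in $B^{\dag,r}_{\rig,K}$---possible since each lies in $B^\dag_{\rig,K} = \bigcup_s B^{\dag,s}_{\rig,K}$---and then choose $n \geq n(K)$ large enough that $p^n r \geq 1$, so that $\iota_n$ is defined on $B^{\dag,r}_{\rig,K}$ and the usual formula $D^+_{\dif,(i)} = D^r_{(i)} \otimes_{B^{\dag,r}_{\rig,K},\iota_n} K_\infty[\![t]\!]$ applies.  Because $\iota_n$ is a ring homomorphism sending units to units, $\iota_n(M) \in \GL_d(K_n[\![t]\!])$ furnishes a $K_\infty[\![t]\!]$-linear isomorphism $D^+_{\dif,(1)} \xrightarrow{\sim} D^+_{\dif,(2)}$.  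Applying $\iota_n$ to the displayed identity and using the $\Ga_K$-equivariance of $\iota_n$ established in Berger's construction yields
\[
\iota_n(G_2(\ga)) \;=\; \iota_n(M)^{-1}\,\iota_n(G_1(\ga))\,\ga(\iota_n(M)),
\]
which is precisely the assertion that this isomorphism is $\Ga_K$-equivariant.

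The $D_\dif$ version follows by inverting $t$ throughout, requiring no further argument.  I do not expect a deep obstacle; the main care needed is technical bookkeeping, namely arranging $r$ small enough and $n$ large enough that both Cherbonnier lattices, the matrix $M$ together with its inverse, and the map $\iota_n$ are all simultaneously defined, and then invoking the standard independence-of-$(r,n)$ results of Berger's construction so that the isomorphism just produced descends to the canonical $\Ga_K$-modules $D_\dif^+$ and $D_\dif$.
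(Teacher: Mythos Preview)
Your argument is correct and follows essentially the same route as the paper: the paper observes (via the proof of \cite[Th\'eor\`eme I.3.3]{B2}) that for \emph{any} basis $e$ of $D$ and all sufficiently small $r$, the Cherbonnier lattice $D^r$ is simply the $B^{\dag,r}_{\rig,K}$-span of $e$, so that $D^r$---and hence $D_\dif^+$---is already independent of $\vphi$ once $r$ is small. Your step of shrinking $r$ until $M, M^{-1} \in M_d(B^{\dag,r}_{\rig,K})$ amounts to exactly this (it forces $D^r_{(1)} = D^r_{(2)}$), after which applying $\iota_n$ and the conjugation identity is the same conclusion the paper draws.
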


\begin{proof}
The construction of $D^r$ in the proof of \cite[Th\'eor\`eme
I.3.3]{B2} shows that, for {\it any} $B^\dag_{\rig,K}$-basis $e =
\{e_1,\ldots,e_d\}$, there exists $0 < r(e) < r(D)$ such that the
$\Ga_K$-action on $e$ is defined over $B^{\dag,r(e)}_{\rig,K}$ and if
$0 < r \leq r(e)$ then $D^r$ is $B^{\dag,r}_{\rig,K}$-spanned by $e$.
Moreover, all the modules $D^r \otimes_{B^{\dag,r}_{\rig,K},\iota_n}
K_\infty[\![t]\!]$ for $0 < r < r(D)$, $n \geq n(K)$, and $rp^n \geq
1$ are isomorphic as $\Ga_K$-modules.  Thus, if we consider $D_\dif^+$
as the $K_\infty[\![t]\!]$-span of $e$ with $\Ga_K$-action piped
through $\iota_n$, then the resulting isomorphism class stabilizes for
$n \gg 0$.  We simply take $n$ large enough to ensure this.  (In other
words, $\vphi$ only guarantees that the $\Ga_K$-structures are
equivalent and chooses the equivalence, but they are all equivalent no
matter which $\vphi$ is used to show it.)

The claim for $D_\dif$ follows from the claim for $D_\dif^+$ after
inverting $t$.
\end{proof}

\begin{cor}
If $D$ and $D'$ are two $(\vphi,\Ga_K)$-modules, and $D \cong D'$ as
$\Ga_K$-modules, then $D$ is (+)de~Rham if and only if $D'$ is
(+)de~Rham.
\end{cor}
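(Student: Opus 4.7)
The plan is to reduce the corollary directly to Proposition \ref{ppn-phi-irr} by transporting $\vphi$-structures across the given $\Ga_K$-isomorphism. Recall that $D$ is (+)de~Rham by definition iff $\dim_K D_\dR^{(+)} = \rank D$, where $D_\dR^{(+)} = (D_\dif^{(+)})^{\Ga_K}$. Since $D$ and $D'$ have the same rank as $B^\dag_{\rig,K}$-modules (being $\Ga_K$-isomorphic, in particular $B^\dag_{\rig,K}$-isomorphic), it suffices to prove $\dim_K D_\dR^{(+)} = \dim_K D'{}_\dR^{(+)}$, and hence to exhibit a $\Ga_K$-equivariant $K_\infty[\![t]\!]$-isomorphism (resp.\ $K_\infty(\!(t)\!)$-isomorphism) $D_\dif^{(+)} \cong D'{}_\dif^{(+)}$.

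First I would fix a $\Ga_K$-equivariant $B^\dag_{\rig,K}$-linear isomorphism $\sigma \cn D \xrightarrow{\sim} D'$, and use it to pull back the $\vphi$-action $\vphi_{D'}$ on $D'$ to a new $\vphi$-action $\vphi'' \ceq \sigma\inv \circ \vphi_{D'} \circ \sigma$ on $D$. Because $\sigma$ is $\Ga_K$-equivariant and $\vphi_{D'}$ commutes with $\Ga_K$ on $D'$, the operator $\vphi''$ commutes with the $\Ga_K$-action on $D$, so the pair $(D,\vphi'')$ is a genuine $(\vphi,\Ga_K)$-module, and $\sigma \cn (D,\vphi'') \to (D',\vphi_{D'})$ is by construction an isomorphism of $(\vphi,\Ga_K)$-modules.

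Next I would apply the functor $D \mapsto D_\dif^{(+)}$ to this isomorphism of $(\vphi,\Ga_K)$-modules, which yields a $\Ga_K$-equivariant isomorphism between $D_\dif^{(+)}$ computed using $\vphi''$ and $D'{}_\dif^{(+)}$ computed using $\vphi_{D'}$. Now invoke Proposition \ref{ppn-phi-irr}: the $\Ga_K$-isomorphism class of $D_\dif^{(+)}$ is insensitive to which $\vphi$-structure on $D$ was used to construct it, so $D_\dif^{(+)}$ computed via $\vphi''$ is $\Ga_K$-isomorphic to $D_\dif^{(+)}$ computed via the original $\vphi_D$. Composing the two isomorphisms produces the desired $\Ga_K$-equivariant identification of $D_\dif^{(+)}$ with $D'{}_\dif^{(+)}$; taking $\Ga_K$-invariants yields $D_\dR^{(+)} \cong D'{}_\dR^{(+)}$ as $K$-vector spaces, and the corollary follows.

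There is essentially no obstacle, since all the substantive content has been packaged into Proposition \ref{ppn-phi-irr}; the only point to be slightly careful about is that the proposition is stated for two $\vphi$-structures on the same underlying $\Ga_K$-module, so one must first use $\sigma$ to bring everything onto a common underlying module before applying it.
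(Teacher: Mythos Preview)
Your proof is correct and is exactly the argument the paper has in mind: the corollary is stated without proof immediately after Proposition \ref{ppn-phi-irr}, as a direct consequence of it, and your transport-of-$\vphi$-structure argument is precisely how one unpacks that implication. Your remark that one must first pull the two $\vphi$-structures onto a common underlying $\Ga_K$-module before invoking the proposition is the one point worth making explicit, and you have done so.
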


We conclude this section by applying the Proposition \ref{ppn-phi-irr}
to $(\vphi,\Ga_K)$-modules $D$ having the property that, as
$\Ga_K$-modules, they are isomorphic to $(t^nB^\dag_{\rig,K})^{\oplus
d}$.  Such $D$ is crystalline, with $D_\crys = (t^{-n}D)^{\Ga_K}$ a
$\vphi$-module over $F$ from which we can recover $D$ completely: $D =
t^nB^\dag_{\rig,K} \otimes_F D_\crys$.

Although we know being crystalline implies being de~Rham, one can also
see that $D$ is de~Rham by way of the proposition: $D_\dif^+$ is
$\Ga_K$-isomorphic to $(t^nK_\infty[\![t]\!])^{\oplus d}$, and
therefore $D_\dif$ is $\Ga_K$-isomorphic to $K_\infty(\!(t)\!)^{\oplus
d}$, which clearly has enough $\Ga_K$-invariants.  Moreover, one finds
that
\begin{align}
\label{eqn-phi-irr1}
\dim_K H^q(\Ga_K,D_\dif^+)
& = d \cdot \dim_K H^q(\Ga_K,t^nK_\infty[\![t]\!]) =
\begin{cases}
d & \text{if }n \leq 0 \\
0 & \text{if }n \geq 1
\end{cases} \\
\intertext{and}
\label{eqn-phi-irr2}
\dim_K H^q(\Ga_K,D_\dif)
& = d \cdot \dim_K H^q(\Ga_K,t^nK_\infty(\!(t)\!)) = d,
\end{align}
for both $q = 0,1$.

\subsection{Cohomology of triangulordinary $(\vphi,\Ga_K)$-modules}
\label{sect-local-proof}

As in \S\ref{sect-definitions}, we set $L = \wh{K^\unr}$.  In this
subsection we prove Theorem \ref{thm-local-main}.

\begin{proof}[Proof of Theorem \ref{thm-local-main}]
We first apply the techniques of Galois descent.

We assume the theorem holds for $K=L$.  Given a triangulordinary $D$,
the theorem over $L$ shows that $D_L$ is (+)de~Rham, and therefore by
Corollary \ref{cor-derham-descent} we know that $D$ is (+)de~Rham.
Moreover, since the $(\Gr_F^n)_L$ are unconditionally crystalline by
the comments concluding \S\ref{sect-local-phi}, we can apply
Proposition \ref{ppn-crys-descent} to deduce that the $\Gr_F^n$
themselves are crystalline, and hence semistable.  Since $D$ is
simultaneously de~Rham and a successive extension of semistable
pieces, \cite[Th\'eor\`eme 6.2]{B1} asserts that $D$ is semistable.
(Note that its proof applies without change to the non-\'etale case.)
Thus we have proved: if $D_L$ is (+)de~Rham, then $D$ is (+)de~Rham
and even semistable, in all cases.

By Corollary \ref{cor-BK-descent}, one has
\[
H^1_{g(+)}(D) = \alpha_L\inv H^1_{g(+)}(D_L).
\]
On the other hand, by its very definition,
\[
H^1_\tord(D;F^*) = \alpha_L\inv H^1_\tord(D_L;F^*_L).
\]
Therefore, if $H^1_\tord(D_L;F^*_L) = H^1_{g(+)}(D_L)$, then
$H^1_\tord(D;F^*) = H^1_{g(+)}(D)$.

The upshot is that we only need to prove (2) and the (+)de~Rham claim
of (1), assuming that
\[
K = L = \wh{K^\unr}, \text{ i.e.\ } k \text{ is algebraically closed.}
\]
Under this assumption, we develop a number of properties of
triangulordinary $(\vphi,\Ga_K)$-modules, organized under the
following lemma.  (In particular, part (3) of the lemma finishes part
(1) of the theorem.)

\begin{lem}\label{lem-tord}
Let $D$ be triangulordinary.  Then the following claims hold.
\begin{enumerate}
\item If $F^1=D$, then $H^q(\Ga,D_\dif^+) = 0$ for $q=0,1$, and
\[
H^1_{g+}(D) = H^1(D) = H^1_\tord(D).
\]
\item One has a decomposition $D_\dif^+ = \bigoplus_n
  (\Gr_F^n)_\dif^+$ as $\Ga_K$-modules.
\item $D$ is de~Rham, and it is +de~Rham if and only if $F^1 = 0$.
\item The natural map $H^1(\Ga_K,D_\dif^+) \to
  H^1(\Ga_K,(D/F^1)_\dif^+)$ is an isomorphism.
\end{enumerate}
\end{lem}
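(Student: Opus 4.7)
The plan is to exploit the working hypothesis that $k$ is algebraically closed (so $K = L = \wh{K^\unr}$): by Corollary \ref{cor-crys-HT-n}, the triangulordinary hypothesis supplies $\Ga_K$-isomorphisms $\Gr_F^n \cong (t^n B^\dag_{\rig,K})^{\oplus d_n}$, and by Proposition \ref{ppn-phi-irr} (irrelevance of the $\vphi$-structure) these transport to $\Ga_K$-isomorphisms $(\Gr_F^n)_\dif^+ \cong (t^n K_\infty[\![t]\!])^{\oplus d_n}$. With these identifications in hand, all four claims will reduce to the explicit cohomology formulas \eqref{eqn-phi-irr1} and \eqref{eqn-phi-irr2}.

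For (1), assume $F^1 = D$. I induct on the filtration length, applying the long exact $\Ga_K$-cohomology sequence to each
\[
0 \to (F^{n+1})_\dif^+ \to (F^n)_\dif^+ \to (\Gr_F^n)_\dif^+ \to 0;
\]
since \eqref{eqn-phi-irr1} gives $H^q(\Ga_K, (\Gr_F^n)_\dif^+) = 0$ for $q = 0, 1$ whenever $n \geq 1$, this forces $H^q(\Ga_K, D_\dif^+) = 0$ as well. Then $H^1_{g+}(D) = H^1(D)$ because the target of its defining map vanishes, and $H^1_\tord(D) = H^1(D)$ trivially because $D/F^1 = 0$. For (2), I proceed by descending induction and split each
\[
0 \to (F^{n+1})_\dif^+ \to (F^n)_\dif^+ \to (\Gr_F^n)_\dif^+ \to 0
\]
$\Ga_K$-equivariantly in the category of $K_\infty[\![t]\!]$-modules with semilinear $\Ga_K$-action. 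Since $K_\infty[\![t]\!]$-linear sections always exist, the obstruction lies in
\[
\Ext^1_{\Ga_K}\bigl((\Gr_F^n)_\dif^+, (F^{n+1})_\dif^+\bigr) = H^1\bigl(\Ga_K, \Hom_{K_\infty[\![t]\!]}((\Gr_F^n)_\dif^+, (F^{n+1})_\dif^+)\bigr),
\]
which, using the inductive decomposition of $(F^{n+1})_\dif^+$ and the $\Ga_K$-equivariant identification $\Hom_{K_\infty[\![t]\!]}(t^n K_\infty[\![t]\!], t^m K_\infty[\![t]\!]) \cong t^{m-n} K_\infty[\![t]\!]$, becomes a direct sum of copies of $H^1(\Ga_K, t^{m-n} K_\infty[\![t]\!])$ for $m > n$; all of these vanish by \eqref{eqn-phi-irr1}.

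Part (3) is then immediate: from (2) one obtains $D_\dif \cong K_\infty(\!(t)\!)^{\oplus \rank D}$ after inverting $t$, so \eqref{eqn-phi-irr2} yields $\dim_K D_\dR = \rank D$; while $\dim_K D_\dR^+ = \sum_{n \leq 0} d_n$ by \eqref{eqn-phi-irr1}, which equals $\rank D$ precisely when $F^1 = 0$. Part (4) follows from the long exact $\Ga_K$-cohomology sequence attached to $0 \to (F^1)_\dif^+ \to D_\dif^+ \to (D/F^1)_\dif^+ \to 0$: by (1) applied to $F^1$, both $H^0$ and $H^1$ of $(F^1)_\dif^+$ vanish, and the Herr-type complex $C_\ga^\bullet$ being concentrated in degrees $[0,1]$ means no $H^2$ term appears on the right. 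The main obstacle is thus part (2), the splitting of the $\dif^+$-filtration; once this is pinned down via the $\Ext^1$-$H^1$ identification together with the vanishing \eqref{eqn-phi-irr1} for positive weights, the remaining content of the lemma is essentially formal.
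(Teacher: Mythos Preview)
Your proof is correct and follows the paper's approach closely; the differences are only tactical. In part (2), the paper first twists so that the top graded sits in weight $0$, making $(D/F^1)_\dif^+ \cong {\bf 1}^{\oplus d_0}$; the obstruction then lies in $H^1(\Ga_K,(F^1)_\dif^+)^{\oplus d_0}$, which vanishes by a direct appeal to part (1)---this sidesteps your explicit identification of $\Hom_{K_\infty[\![t]\!]}(t^nK_\infty[\![t]\!],t^mK_\infty[\![t]\!])$ with $t^{m-n}K_\infty[\![t]\!]$. In part (4), the paper simply applies $H^1(\Ga_K,-)$ to the direct-sum decomposition from (2) and invokes \eqref{eqn-phi-irr1}, rather than running the long exact sequence and using that $C_\ga^\bullet$ has amplitude $[0,1]$. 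Both routes are equally valid; the paper's are marginally slicker in that they recycle earlier parts of the lemma instead of redoing the underlying computation.
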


\begin{proof}
(1) To prove the first claim, we proceed by d\'evissage and induction
on the length of $F^*$, and are immediately reduced to the case where
$D = \Gr^n_F$.  But in this situation, Equations
\ref{eqn-phi-irr1}--\ref{eqn-phi-irr2} provide exactly what we desire.
As for the second claim, the first quality follows from the first
claim, while the second equality follows from the fact that $F^1 = D$.

(2) We induct on the length of $F^*$, the case of length $1$ being
trivial.  By twisting, we can assume that $F^0 = D$ and $F^1 \neq D$,
and we must show that the extension class
\[
0 \to (F^1)_\dif^+ \to D_\dif^+ \to (D/F^1)_\dif^+ \to 0
\]
is split.  By Equation \ref{eqn-phi-irr1}, $(D/F^1)_\dif^+ \cong
K_\infty[\![t]\!]^{\oplus d_0}$.  Therefore, as an extension class, we
have
\[
[D_\dif^+] \in \Ext^1_{\Ga_K}({\bf 1}^{\oplus d_0},(F^1)_\dif^+) =
\Ext^1_{\Ga_K}({\bf 1},(F^1)_\dif^+)^{\oplus d_0} =
H^1(\Ga_K,(F^1)_\dif^+)^{\oplus d_0} = 0,
\]
by part (1), since $F^1$ is triangulordinary of weights $\geq 1$.
Hence, the desired extension class is split.

(3) Invoking the decomposition in (2), we have
\[
D_\dif = D_\dif^+[t\inv] = \bigoplus_n (t^nK_\infty[\![t]\!])^{\oplus
d_i}[t\inv] = K_\infty(\!(t)\!)^{\oplus \rank D}.
\]
Therefore, $\dim_K D_\dR = \rank D$, and $D$ is de~Rham.  And,
applying $\Ga_K$-invariants directly to the decomposition in (2),
Equation \ref{eqn-phi-irr1} shows that $D$ is +de~Rham if and only if
$d_n = 0$ for all $n \geq 1$, i.e.\ $F^1 = 0$.

(4) This follows by applying $H^1(\Ga_L,\cdot)$ to the decomposition
of (2), and noting Equation \ref{eqn-phi-irr1}.
\end{proof}

The rest of this section thus aimed at proving claim (2) of the
theorem.  Note that it is precisely at this point that we must work
with the ``g+'' condition and not the ``g'' condition.

Consider the commutative diagram
\[\xymatrix{
H^1(D) \ar[r] \ar[d]_\beta & H^1(\Ga_K,D_\dif^+)
\ar[d]^{\protect\rotatebox[origin=c]{90}{$\sim$}} \\
H^1(D/F^1) \ar[r] & H^1(\Ga_K,(D/F^1)_\dif^+)
},\]
where the isomorphism is by (4) of Lemma \ref{lem-tord}.  The kernel
of the top row is $H^1_{g+}(D)$, so that $H^1_{g+}(D) = \beta\inv
H^1_{g+}(D/F^1)$.  Also, directly from the definition, $H^1_\tord(D) =
\beta\inv H^1_\tord(D/F^1)$.  Therefore, we have reduced to the case
where $F^1 = 0$, i.e.\ $D$ only has weights $\leq 0$.  In this case
(recall $K = \wh{K^\unr}$), by definition $H^1_\tord(D) = 0$, and so
we must show that $H^1_{g+}(D) = 0$ as well, i.e.\ that $H^1(D)
\hookrightarrow H^1(\Ga_K,D_\dif^+)$.

By part (1) of the theorem, assuming that $F^1=0$ means that $D$ is
+de~Rham.  Therefore, $H^1_{g+}(D)$ has an interpretation in terms of
extension classes that are also +de~Rham.  We will harness this
interpretation.

Considering the commutative diagram with exact rows
\[\xymatrix{
& H^1(F^j) \ar[r] \ar[d] & H^1(D) \ar[r] \ar[d] & H^1(D/F^j) \ar[d] \\
0 \ar[r] & H^1(\Ga_K,(F^j)_\dif^+) \ar[r] & H^1(\Ga_K,D_\dif^+) \ar[r]
& H^1(\Ga_K,(D/F^j)_\dif^+) \ar[r] & 0
},\]
an easy diagram chase shows that if the outer vertical arrows are
injective, then so is the middle.  This allows us to induct on the
length of the filtration $F^*$, and reduce to the case where $F^n = D$
and $F^{n+1} = 0$.  In other words, we may assume that, as a
semilinear $\Ga_K$-module, we have $D \cong
(t^nB^\dag_{\rig,K})^{\oplus d}$.  As mentioned in the concluding
comments of \S\ref{sect-local-phi}, such objects are easy to classify:
they are of the form $D = t^nB^\dag_{\rig,K} \otimes_F D_\crys$, where
\[
D_\crys = (D[t\inv])^{\Ga_K} = D^{\Ga_K} = (t^{-n}D)^{\Ga_K}
\]
(remember $n \leq 0$) is a semilinear $\vphi$-module over $F$.

We may further simplify, by reducing the case of general $n$ to the
case when $n=0$ by a descending induction on $n$.  So, we assume the
claim holds for $n+1 \leq 0$, and show it holds for $n$.  We consider
the following commutative diagram with exact rows:
\[\xymatrix@=1.45pc{
H^0(D/tD) \ar[r] \ar[d] & H^1(tD) \ar[r] \ar[d]_a
& H^1(D) \ar[r] \ar[d]_b & H^1(D/tD) \ar[d] \\
H^0(\Ga_K,D_\dif^+/tD_\dif^+) \ar[r] & H^1(\Ga_K,tD_\dif^+) \ar[r]
& H^1(\Ga_K,D_\dif^+) \ar[r] & H^1(\Ga_K,D_\dif^+/tD_\dif^+)
}.\]
In the bottom row, the first and last groups are respectively
isomorphic to
\[
H^q(\Ga_K,K_\infty t^n)^{\oplus d} \text{ for } q=0,1,
\]
and thus classically seen to be trivial; hence, the bottom middle
arrow is an isomorphism.  Using \cite[Lemma 3.2(1--2)]{L}, we have
\begin{multline}\label{eqn-local-KL}
D/tD \cong (t^nB^\dag_{\rig,K}/t^{n+1}B^\dag_{\rig,K})^{\oplus d} \\
= \rlim_r (t^nB^{\dag,r}_{\rig,K}/t^{n+1}B^{\dag,r}_{\rig,K})^{\oplus d}
= \rlim_r \prod_{m \geq n(r)} (K'_m t^n)^{\oplus d}.
\end{multline}
Examining the Herr complex
\[
(D/tD)^{\Delta_K} \xrightarrow{(\vphi-1,\ga-1)} (D/tD)^{\Delta_K}
\oplus (D/tD)^{\Delta_K} \xrightarrow{(1-\ga) \oplus (\vphi-1)}
(D/tD)^{\Delta_K},
\]
we immediately deduce from Equation \ref{eqn-local-KL} that
$H^0(D/tD)$ vanishes, because $n \neq 0$.  On the other hand, one
easily uses Equation \ref{eqn-local-KL} and \cite[Lemma 3.2(3--4)]{L}
to calculate that $H^1(D/tD)$ is isomorphic to $\rlim_m
\left[((K'_m)^{\Delta_K}t^n)/(\ga-1)\right]^{\oplus d}$, and each term
of this limit is zero, since $n \neq 0$.  Hence, the top middle arrow
in our commutative diagram is an isomorphism too.  Notice that $n-1$
is not a $\vphi$-slope on $D$ if and only if $n$ is not a
$\vphi$-slope on $tD$ (which is triangulordinary of all weights
$n+1$).  Therefore, the inductive hypothesis applies to $tD$, and $b$
is injective; we conclude that $a$ is also injective.  Thus it
suffices to treat the case where $n=0$.

Recall that we want to show that $H^1_{g+}(D) = 0$.  In other words,
given a class $c \in H^1(D)$, represented by an extension $E_c$, we
want to show that if $E_c$ is +de~Rham then it must be split.  If it
is +de~Rham then, invoking \cite[Th\'eor\`eme 6.2]{B1} again, it must
be semistable.  We will show that every semistable extension $E_c$ is
crystalline, {\it under our hypothesis on the $\vphi$-slopes of $D$}.
Then, we will show that every crystalline extension is split.

So let $E = E_c$ be given, and assumed to be semistable.  We write
$E_\st = \bfD_\st(E)$ for the associated filtered $(\vphi,N)$-module;
showing that $E$ is crystalline is tantamount to showing that $N=0$ on
$E_\st$.  In fact, since $D$ is crystalline, one has $N = 0$ on the
corank $1$ subspace $D_\crys \subset E_\st$.  To treat the remainder
of $E_\st$, consider the $\vphi$-modules over $F$ that underlie the
the exact sequence
\[
0 \to D_\crys \to E_\st \to {\bf 1}_\crys \to 0.
\]
By the Dieudonn\'e--Manin theorem, the category of $\vphi$-modules
over $F$ is semisimple (recall that $k$ is algebraically closed), so
we may split this extension of $\vphi$-modules, i.e.\ choose a
$\vphi$-fixed vector $e \in E_\st$ that spans the complement of
$D_\crys$.  We will know that $N=0$ on $E_\st$ as soon as we know that
$N(e)=0$.  To see this, remember that $D$ is $\Ga_K$-isomorphic to
$(B^\dag_{\rig,K})^{\oplus d}$, and so the $\vphi$-slopes (\`a la
Dieudonn\'e--Manin) on $D_\crys$ are equal to the $\vphi$-slopes (\'a
la Kedlaya) on $D$, which by hypothesis are not equal to $-1$, while
the $\vphi$-slope of $e$ is $0$.  Denoting $E_\st^{(\la)}$ for the
slope-$\la$ part of $E_\st$, recall that $N\vphi = p\vphi N$, and
hence $N(E_\st^{(\la)}) \subseteq E_\st^{(\la-1)}$.  On the other
hand, we have
\[
N(e) \in N(E_\st^{(0)}) \subseteq E_\st^{(-1)} = 0.
\]
Therefore, $E$ is crystalline.

Given an extension $E$ of $D$ that is crystalline, we show that it is
trivial.  In fact, applying Dieudonn\'e--Manin just as above, we find
that $E_\st = E_\crys$ is split as a $\vphi$-module:
\[
E_\crys = D_\crys \oplus {\bf 1}_\crys.
\]
But we may recover $E$ {\it as a $(\vphi,\Ga_K)$-module} from
$E_\crys$.  In fact,
\begin{multline*}
E = B^\dag_{\rig,K} \otimes_F E_\crys = B^\dag_{\rig,K} \otimes_F
 (D_\crys \oplus {\bf 1}_\crys) \\ = (B^\dag_{\rig,K} \otimes_F
 D_\crys) \oplus (B^\dag_{\rig,K} \otimes_F {\bf 1}_\crys) = D \oplus
 {\bf 1},
\end{multline*}
with $\vphi$ acting diagonally and $\Ga_K$ acting on the left
$\otimes$-factors.  This shows that $E = E_c$ is split as an extension
of $(\vphi,\Ga_K)$-modules, and its corresponding class $c \in H^1(D)$
is trivial.  This completes the proof.
\end{proof}

\begin{rem}
A curious byproduct of the final step of the argument is that, for $D$
in the special form treated there, if an extension $E$ of $D$ as a
$\Ga_K$-module admits any $\vphi$-structure, then it admits only one
$\vphi$-structure.
\end{rem}

\begin{rem}
The hypothesis on the $\vphi$-slopes of $D$ really is necessary when
working with general $(\vphi,\Ga_K)$-modules, as the following example
shows.  The trouble seems to be that when we have left the category of
Galois representations, i.e.\ {\it \'etale} $(\vphi,\Ga_K)$-modules
and {\it admissible} filtered $(\vphi,N,G_K)$-modules, there are
simply too many objects to be handled via ordinary-theoretic
techniques.  Cf.\ the failure of the above proof to apply to the ``g''
local condition.
\end{rem}

\begin{exmp}\label{exmp-bad}
Consider the filtered $(\vphi,N)$-module $E_\st =
\text{span}(e_0,e_{-1})$, with $\Fil^0 = E_\st$, $\Fil^1 = 0$, and
$\vphi$ and $N$ given by
\[
\vphi(e_0)=e_0,\ \vphi(e_{-1})=p\inv e_{-1},
\qquad \text{and} \qquad
N(e_0)=e_{-1},\ N(e_{-1})=0.
\]
Then $E_\st$ corresponds to a $(\vphi,\Ga_K)$-module $E$ of rank $2$
which is not \'etale, is semistable of Hodge--Tate weights both $0$,
and is {\it not crystalline}.  Such an object is unheard of in the
classical setting.

Notice that $D_\st = \text{span}(e_{-1})$ corresponds to a subobject
$D$ of $E$, with quotient object isomorphic to ${\bf 1}_\crys$ (taking
$e_0$ to the standard basis element).  Thus, $E$ represents a
nontrivial extension class in $H^1(D)$, which is actually +de~Rham
because its Hodge--Tate weights are all $0$.  On the other hand, $D$
is only triangulordinary with respect to $F^0=D$, $F^1=0$, and so if
(for example) $K=\wh{K^\unr}$ then $H^1_\tord(D) = 0$, and
$H^1_\tord(D) \subsetneq H^1_{g+}(D)$.
\end{exmp}

The manner of the reduction steps in the proof of the theorem show
that all counterexamples to its conclusion, outside the context of the
slope hypothesis, arise by some manipulation (twisting, extensions,
descent) from the above example.

We will see in \S\ref{sect-local-ex} that the above counterexample
really does occur as a graded piece within \'etale
$(\vphi,\Ga_K)$-modules arising in nature, namely in the setting of a
modular form with good reduction at $p$ and slopes $1,k-2$ (where $k$
is the weight of $f$).  It is unclear to the author whether its
obstruction can be worked around, even in explicit examples.

\subsection{Comparison with Bloch--Kato, ordinary and trianguline}
\label{sect-compare}

Since the reader might be wondering what ``triangulordinary'' means,
we explain how triangulordinary representations and Selmer groups
relate to common notions due to Bloch--Kato, Greenberg, and Colmez.

All the examples that motivate this work take place when $D$ is
\'etale, so that $D = \bfD^\dag_\rig(V)$ for a {\it bona fide}
$p$-adic representation $V$ of $G_K$.  In order to place Theorem
\ref{thm-local-main} into context, we recall the following facts,
which are essentially due to Bloch--Kato \cite{BK}.

\begin{ppn}[\cite{BK}]\label{ppn-local-BK}  Let $V$ be de~Rham.  Then
the following claims hold.
\begin{enumerate}
\item One always has $H^1_g(V) = H^1_{g+}(V)$, these two items being
  defined in \S\ref{sect-galois-descent}.
\item If $V$ is semistable and $\bfD_\crys(V)^{\vphi=p\inv} = 0$, then
  $H^1_g(V) = H^1_f(V)$.
\end{enumerate}
In the second item, the local condition $H^1_f(V)$ consists of those
extension classes that are split after tensoring with $B_\crys$, and
provides the correct Selmer group with which to state Bloch--Kato's
conjectural analytic class number formula.
\end{ppn}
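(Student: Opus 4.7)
Both claims originate in Bloch--Kato. My strategy is to treat (1) by a direct cohomological comparison and (2) by invoking the Bloch--Kato duality framework, where the hypothesis $\bfD_\crys(V)^{\vphi=p^{-1}} = 0$ precisely kills the obstruction to $H^1_g = H^1_f$.

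For (1), set $D = \bfD^\dag_\rig(V)$. The inclusion $D_\dif^+ \hookrightarrow D_\dif$ gives a factoring $H^1(V) \to H^1(\Ga_K, D_\dif^+) \to H^1(\Ga_K, D_\dif)$, whence $H^1_{g+}(V) \subseteq H^1_g(V)$ is immediate. For the reverse inclusion, the long exact sequence associated to $0 \to D_\dif^+ \to D_\dif \to D_\dif/D_\dif^+ \to 0$ reduces matters to showing that $H^0(\Ga_K, D_\dif) \to H^0(\Ga_K, D_\dif/D_\dif^+)$ is surjective. Using the de~Rham identification $D_\dif \cong D_\dR \otimes_K K_\infty(\!(t)\!)$ (with $\Ga_K$ acting through the second factor) together with Berger's description of $D_\dif^+$ as $\sum_n \Fil^n D_\dR \otimes_K t^{-n} K_\infty[\![t]\!]$, the quotient $D_\dif/D_\dif^+$ becomes a successive extension of $\Ga_K$-modules of the form $K_\infty(j)$ for various $j$ determined by the Hodge--Tate weights. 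By Tate's classical computation only the $j=0$ pieces contribute to $\Ga_K$-invariants, and these invariants visibly lift through the $\Ga_K$-fixed subspace $D_\dR \subseteq D_\dif$.

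For (2), I would invoke the Bloch--Kato chain $H^1_e(V) \subseteq H^1_f(V) \subseteq H^1_g(V)$ together with Bloch--Kato's identification (derived from Fontaine's fundamental exact sequence)
\[
H^1_f(V)/H^1_e(V) \cong \bfD_\crys(V)^{\vphi=1}.
\]
Under local Tate duality with respect to $V \otimes V^*(1) \to \bbQ_p(1)$, these three subgroups on $V$ are orthogonal complements of the reversed chain on $V^*(1)$, yielding
\[
H^1_g(V)/H^1_f(V) \cong \bigl(H^1_f(V^*(1))/H^1_e(V^*(1))\bigr)^\vee \cong \bigl(\bfD_\crys(V^*(1))^{\vphi=1}\bigr)^\vee.
\]
The canonical perfect $F$-linear pairing $\bfD_\crys(V) \otimes_F \bfD_\crys(V^*(1)) \to \bfD_\crys(\bbQ_p(1)) = F \cdot t^{-1}$, on which $\vphi$ acts as multiplication by $p^{-1}$, identifies $\bfD_\crys(V^*(1))^{\vphi=1}$ with the $F$-dual of $\bfD_\crys(V)^{\vphi=p^{-1}}$. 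The hypothesis $\bfD_\crys(V)^{\vphi=p^{-1}} = 0$ then forces $H^1_g(V) = H^1_f(V)$.

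The main technical obstacle lies in Part~(2): verifying that the $H^1_e \subseteq H^1_f \subseteq H^1_g$ chain, the formula for $H^1_f/H^1_e$, and orthogonality under local Tate duality all carry over unchanged from the crystalline setting to the semistable one. This requires careful manipulation of Fontaine's rings $B_\st \supset B_\crys$ and of the monodromy operator on $\bfD_\st(V)$, so as to ensure the relevant fundamental exact sequence still splits off a $\vphi=1$ eigenspace cleanly. By contrast, Part~(1) is essentially a direct computation once Berger's explicit description of $D_\dif^+$ for de~Rham $D$ is in hand.
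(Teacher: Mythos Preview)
The paper does not supply its own proof of this proposition: it is stated with the attribution \cite{BK} and then immediately used. So there is nothing to compare against line by line; your proposal is essentially filling in the argument that the paper elects to cite.

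That said, your outline is the standard one and is correct in substance. For (1), your reduction to the surjectivity of $H^0(\Ga_K,D_\dif) \to H^0(\Ga_K,D_\dif/D_\dif^+)$ is exactly right, and in the de~Rham case this surjectivity amounts to the identification $D_\dR/D_\dR^+ \stackrel{\sim}{\to} (D_\dif/D_\dif^+)^{\Ga_K}$, which follows from Tate's computation of $H^0(\Ga_K,K_\infty \cdot t^j)$ as you say. (Your formula for $D_\dif^+$ has a sign in the exponent that depends on conventions; it does not affect the argument.) For (2), the chain $H^1_e \subseteq H^1_f \subseteq H^1_g$, the orthogonality under local Tate duality, and the identification $H^1_g(V)/H^1_f(V) \cong \bigl(\bfD_\crys(V^*(1))^{\vphi=1}\bigr)^\vee$ are precisely the ingredients from Bloch--Kato's Proposition~3.8 and its corollaries; your eigenvalue matching via the pairing into $\bfD_\crys(\bbQ_p(1))$ is the right way to convert the hypothesis on $V$ into the vanishing on $V^*(1)$.

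The one point worth flagging is that the Bloch--Kato dimension formulas are stated in \cite{BK} for \emph{crystalline} $V$, whereas the proposition here assumes only semistable. You correctly identify this as the main technical wrinkle. The extension is genuine but routine: one replaces the fundamental exact sequence $0 \to \bbQ_p \to B_\crys^{\vphi=1} \to B_\dR/B_\dR^+ \to 0$ by its semistable analogue and checks that the monodromy operator does not disturb the $\vphi=1$ count, since $N$ shifts $\vphi$-slopes and $N\vphi = p\vphi N$ forces $N$ to vanish on any $\vphi$-eigenspace landing in a space with no eigenvalue $p^{-1}$ times smaller. This is exactly the kind of slope argument the paper itself uses elsewhere (in the proof of Theorem~\ref{thm-local-main}).
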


Thus, in the triangulordinary setting, the local condition
$H^1_\tord(V;F)$ usually measures $H^1_f(V)$, and hence the
triangulordinary Selmer group computes the Bloch--Kato Selmer group,
which is of intrinsic interest.

We find it helpful to have access to the following equivalent
formulation.

\begin{altdefn}\label{alt-defn}
We remind the reader that by Theorem \ref{thm-local-main}(1), every
triangulordinary $(\vphi,\Ga_K)$-module is semistable.  Thus, there is
no loss of generality in assuming this is the case from the outset.

Given a semistable $(\vphi,\Ga_K)$-module $D$, the discussion at the
conclusion of \S\ref{sect-phigamma-monodromy} relates semistable
subobjects of $D$ to subobjects of $D_\st$.  We find that the
triangulordinary filtrations $F^* \subseteq D$ are in a natural
correspondence with filtrations $F^* \subseteq D_\st$ by
$(\vphi,N)$-stable subvector spaces (each equipped with its Hodge
filtration induced by $D_\st$), such that each $\Gr_F^n D_\st$ has all
its induced Hodge--Tate weights equal to $n$ (i.e., its induced Hodge
filtration is concentrated in degree $-n$).

Given a corresponding pair $F^* \subseteq D$ and $F^* \subseteq
D_\st$, the gradeds $\Gr_F^n D$ and $\Gr_F^n D_\st$ are linked by the
formula
\[
\Gr_F^n D \cong t^nB^\dag_{\rig,K} \otimes_F \Gr_F^n D_\st
\]
as $(\vphi,\Ga_K)$-modules.  Therefore, the $\vphi$-slopes on $\Gr_F^n
D$ are $n + {}$the $\vphi$-slopes on $\Gr_F^n D_\st$.  As concerns
Theorem \ref{thm-local-main}(2), the requirement that for all $n \leq
0$, $n-1$ not be a $\vphi$-slope on $\Gr_F^n D$ becomes the condition
that, for all such $n$, the graded $\Gr_F^n D_\st$ does not contain
the $\vphi$-slope $-1$.
\end{altdefn}

\begin{exmp}[Relation to ordinary representations]  Let us
see how ordinary representations, defined by Greenberg in \cite{G1},
fit into our context.  We are given a Galois representation $V$, so
that $D = \bfD^\dag_\rig(V)$ is \'etale.  The ordinary hypothesis is
that $V$ admits a decreasing filtration $F^* \subseteq V$ by
$G_K$-stable subspaces, such that, for each $n$, the representation
$\chi_\cycl^{-n} \otimes \Gr_F^n V$ is unramified.  Applying
$\bfD^\dag_\rig$, we obtain a decreasing, $(\vphi,\Ga_K)$-stable
filtration $F^* \subseteq D$ by $B^\dag_{\rig,K}$-direct summands,
such that each $\Gr_F^n D_L$ is \'etale and $\Ga_L$-isomorphic to
$(t^nB^\dag_{\rig,L})^{\oplus d_n}$.  Conversely, given such a
filtration on $D$, Theorem \ref{thm-phigamma-equiv} produces an
ordinary filtration on $V$.  Thus, given an \'etale $D$, the ordinary
hypothesis is a strengthening of the triangulordinary hypothesis to
require all the graded pieces to be \'etale.

In the language of filtered $(\vphi,N)$-modules, a triangulordinary
filtration $F^* \subseteq D_\st$ corresponds to an ordinary filtration
precisely when all the $\Gr_F^n D_\st$ are admissibly filtered, which
means here that each $\Gr_F^n D_\st$ is of pure $\vphi$-slope $-n$.

Moreover, Theorem \ref{thm-local-main}(2) always applies to ordinary
representations.  Namely, for all $n \leq 0$, the number $n-1$ (which
is $\leq -1$) never occurs as a $\vphi$-slope on $\Gr_F^n D$ because
the latter is \'etale.  Thus, this theorem provides a generalization
of Flach's result \cite[Lemma 2]{Fl} from the case where $K=\bbQ_p$ to
the case of arbitrary perfect residue field.

We alert the reader to the fact that, although $V$ admits at most one
ordinary filtration, it may admit many different triangulordinary
filtrations, as we will see below.
\end{exmp}

Before discussing trianguline representations, we point out that our
entire theory works perfectly well with the $E$-coefficients replacing
the $\bbQ_p$-coefficients of Galois representations, for any finite
extension $E/\bbQ_p$.

\begin{exmp}[Relation to trianguline representations]  We now
determine when a triangulordinary $(\vphi,\Ga_K)$-module is
trianguline, and give some comments about the converse.  (More
precisely, we discuss when one can modify a triangulordinary
filtration into a trianguline one, and vice versa.)  Recall that,
following Colmez \cite[\S0.3]{C}, a $(\vphi,\Ga_K)$-module $D$ is {\it
trianguline} if it is a successive extension of rank $1$ objects,
i.e.\ if there exists a decreasing, separated and exhaustive
filtration $F^* \subseteq D$ by $(\vphi,\Ga_K)$-stable
$B^\dag_{\rig,K}$-direct summands, with each graded of rank $1$.  We
call the latter a {\it trianguline filtration}.  When $D$ is
semistable, these correspond precisely to {\it refinements} in the
sense of Mazur: complete flags in $D_\st$ by $(\vphi,N)$-stable
subspaces.

A triangulordinary $D$ is trianguline precisely when the gradeds
$\Gr_F^*$ of the triangulordinary filtration $F^*$ are themselves
trianguline.  Sufficiency is clear.  For necessity, note that $D$ is
semistable, so assume given $D_\st$, and think of triangulordinary
filtrations as being $(\vphi,N)$-stable ones on $D_\st$.  Given our
triangulordinary filtration and any (other) trianguline filtration on
D, taking the intersections of the two filtrations gives a refinement
of the triangulordinary filtration with rank one gradeds.  (One could
avoid using filtered $(\vphi,N)$-modules by converting this last step
into the language of B\'ezout domains.)

In any case, with $D$ triangulordinary, each the $\Gr_F^*$ is
crystalline (by Corollary \ref{cor-crys-HT-n}), so $D$ is trianguline
if and only if the $(\Gr_F^*)_\crys$ admit refinements.  Clearly, this
is the case precisely when $D_\crys$ is an extension of
one-dimensional $\vphi$-stable subspaces.  If the residue field $k$ is
finite, then one can always replace the coefficient field $E$ by a
finite extension in order to achieve this.

As for the converse, since triangulordinary $D$ are always semistable,
we ask when a semistable trianguline $D$ is triangulordinary.  It
turns out that {\it not all} such $D$ are triangulordinary.  For a
semistable trianguline $D$ that is not triangulordinary, we consider
$E$ as in Example \ref{exmp-bad}.  Being constructed out of $E_\st$,
it is semistable; being an extension of $D$ by ${\bf 1}$ it is
trianguline.  Both its Hodge--Tate weights are $0$, so a putative
triangulordinary filtration $F^* \subseteq E$ would have $E =
\Gr_F^0$; by Corollary \ref{cor-crys-HT-n}, in order for $E$ to be
triangulordinary it must be crystalline.  But $E$ is not crystalline.

The above example shows that, roughly, having a nonzero monodromy
operator acting within fixed a Hodge--Tate weight part is an
obstruction to being triangulordinary.  Let us assume that this is not
the case for $D$, and suppose we are given a trianguline filtration
$F^* \subseteq D$.  In order for $D$ to be triangulordinary, we must
be able to arrange that the Hodge--Tate weights of the $\Gr_F^*$ are
nondecreasing, because then, weakening $F^*$ so that each $\Gr_F^n$
has all Hodge--Tate weights equal to $n$, the resulting gradeds must
also be crystalline (by our rough assumption), hence
$\Ga_L$-isomorphic to $(t^nB^\dag_{\rig,L})^{\oplus d_n}$ by Corollary
\ref{cor-crys-HT-n}.  In order to rearrange $F^*$ to have Hodge--Tate
weights in nondecreasing order, we must be able to break up any
extension between adjacent gradeds that are in the wrong order.  Given
the intermediate extension of filtered $(\vphi,N)$-modules
\[
0 \to (\Gr_F^{n+1})_\crys \to (F^n/F^{n+2})_\st \to (\Gr_F^n)_\crys
\to 0
\]
whose Hodge--Tate weights are in decreasing order, one easily checks
that any $(\vphi,N)$-equivariant splitting will do.  But
$(\vphi,N)$-equivariant splittings, in turn, might not exist.  The
$\vphi$-structure itself could be nonsemisimple; by
Dieudonn\'e--Manin, this would require the crystalline $\vphi$-slopes
on the adjacent gradeds to be equal, and the $\vphi$-extension would
only necessarily split upon restriction to $\wh{K^\unr}$.  Assuming
otherwise, that one can find a $\vphi$-eigenvector $v \in
(F^n/F^{n+2})_\st$ mapping onto a basis for $(\Gr_F^n)_\crys$, the
extension is split as $(\vphi,N)$-module if and only if $N(v) = 0$,
which might or might not hold.

In summary, the trianguline condition is roughly more general than the
triangulordinary condition.  Triangulordinary representations are
semistable, and are trianguline when their filtrations may be further
subdivided to have gradeds of rank $1$; the latter always happens
after an extension of coefficients when $k$ is finite.

Trianguline $D$ may be highly nonsemistable due to continuous
variation of Sen weights.  When they are semistable, they may fail to
be triangulordinary, if they have nontrivial extensions with the wrong
ordering of Hodge--Tate weights, or if they have extensions of common
Hodge--Tate weight that are semistable but not crystalline.
\end{exmp}

\subsection{Examples of triangulordinary representations}
\label{sect-local-ex}

In this section we explain when abelian varieties and modular forms
are triangulordinary.  In passing, we gather for easy reference
descriptions of the invariants of the cyclotomic character and modular
forms.  Since many different normalizations are used in the
literature, we have made an effort to organize them systematically.

Let us begin with some discussion of normalizations.  The general
rules are summarized in the following table.  The initial column says
what kind of motive we are dealing with: one cut out of homology or
cohomology.  The first property is which Frobenius operator on
$\ell$-adic realizations has $\ell$-adic {\it integer} eigenvalues.
Next is which power of crystalline Frobenius, $\vphi$ or $\vphi\inv$,
has $p$-adic integer eigenvalues.  Then come the degrees in which we
expect to see jumps in the Hodge filtration.  Finally, we see which
powers of the cyclotomic character tend to appear in the action of
$\Ga_\bbQp$ on basis elements of the $\bfD_\dif^+$ (which is a rough
indication of the $\Ga_\bbQp$-action on $\bfD^\dag_\rig$); these are
the jumps we expect to see in a triangulordinary filtration.

\vskip 12pt

\begin{tabular}{|l|l|l|}
\hline
& homological & cohomological \\
\hline
\hline
$\ell$-adic $\Frob$ & arithmetic & geometric \\
\hline
\hline
crystalline $\vphi$ & $\vphi\inv$ & $\vphi$ \\
\hline
Hodge jumps & nonpositive & nonnegative \\
\hline
\hline
$\Ga_\bbQp$ on $\bfD_\dif^+$ & nonnegative & nonpositive \\
\hline
$\nabla$ord jumps & nonnegative & nonpositive \\
\hline
\end{tabular}

\vskip 12pt

We give three reminders: the $\vphi$ on $\bfD^\dag_\rig$ is always
\'etale, the cyclotomic character and Tate modules of abelian varieties
are {\it homological} objects, and this table is invariant under the
choice of sign of the Hodge--Tate weight of the cyclotomic character.
(In this text, the cyclotomic character has Hodge--Tate weight $+1$.)

\begin{exmp}[The cyclotomic character]  We consider the $p$-adic
cyclotomic character $\chi_\cycl$ as a $1$-dimensional $\bbQ_p$-vector
space $\bbQ_p \cdot e_{\chi_\cycl}$, equipped with a $\bbQ_p$-linear
$G_K$-action via $g(e_{\chi_\cycl}) = \chi_\cycl(g)e_{\chi_\cycl}$.
One has $\bfD^\dag_\rig(\chi_\cycl^n) = B^\dag_{\rig,K} \cdot (1
\otimes e_{\chi_\cycl}^{\otimes n})$ and $\bfD_\crys(\chi_\cycl) = F
\cdot (t^{-n} \otimes e_{\chi_\cycl})$.  From these, we derive the
following table, giving actions on the basis vectors just mentioned.

\vskip 12pt

\begin{tabular}{|l||l|l||l|l|}
\hline
$\ell$-adic $\Frob_\ell^\text{arith}$
  & $\vphi$ on $\bfD_\crys$
  & $\Gr^? \neq 0$
  & $\vphi$ on $\bfD^\dag_\rig$
  & $\Ga_\bbQp$ on $\bfD\dag_\rig$ \\
\hline
$p^n$ & $p^{-n}$ & $-n$ & $1$ & $\chi_\cycl^n$ \\
\hline
\end{tabular}

\vskip 12pt

Finally, we point out that the powers of the cyclotomic character are
all ordinary, hence triangulordinary.  Since they are one-dimensional,
the ordinary filtration is the only choice of triangulordinary
filtration.
\end{exmp}

\begin{exmp}[Abelian varieties]
Take a semistable abelian variety $B$ over $K$ of dimension $d \geq
1$, and consider $D_\st = \bfD_\st(V)$, with $V = T_pB \otimes \bbQ$
the $p$-adic Tate module up to isogeny.  Thus, when dealing with
abelian varieties, we are primarily concerned with homology.  It is
well-known that the Hodge--Tate weights of $B$ are $0$ and $1$, each
with multiplicity $d$, and this tells us that the Hodge filtration
$H^* \subseteq D_\dR$ satisfies $\dim_K \Gr_H^0 = \dim_K \Gr_H^{-1} =
d$, and our triangulordinary filtration $F^* \subseteq D_\st$ must
satisfy $\rank \Gr_F^0 = \rank \Gr_F^1 = d$, and all other gradeds are
trivial.  So, $F^*$ consists of the single datum of a
$(\vphi,N)$-stable $F$-subspace $F^1 \subset D_\st$ of dimension $d$.

Weak admissibility, here, means: nonzero slopes do not meet the Hodge
filtration $H$.  Ordinary means that half these slopes are $0$ (can
lie anywhere), and half are $-1$ (cannot lie in $H$: span a weakly
admissible submodule).  Triangulordinary means, one can find half of
these slopes, $\vphi$-stably, not contained in $H$.  This means that
corresponding subspace $F^1$ has induced Hodge filtration concentrated
in degree $-1$, which automatically forces $\Gr_F^0 = D_\st/F^1$ to
have induced Hodge filtration concentrated in degree $0$.

Thus, in short, a triangulordinary filtration for $V$ consists of a
$d$-dimensional $(\vphi,N)$-stable subspace $F^1 \subseteq D_\st$ such
that $F^1 \otimes_F K$ is complementary to the Hodge filtration $H^0
\subset D_\dR$.

We stress that, because we are in a homological situation, the
$\vphi$-slopes on $\bfD_\st(V)$ are nonpositive.  In order for Theorem
\ref{thm-local-main}(2) to apply, all we need is that $-1$ does not
occur as a $\vphi$-slope on the quotient $\Gr_F^0 D_\crys =
D_\crys/F^1$, or, equivalently, that every instance of slope $-1$
occurs within $F^1$.  (This hypothesis is a variant of a ``noncritical
slope'' condition.)

Let us illustrate the above with some examples, assuming, for
simplicity, that $B$ has good reduction and our coefficients are
$E=\bbQ_p$:

Suppose $B$ has slopes $-1,-2/3,-1/3,0$.  Then $B$ is nonordinary, and
always triangulordinary: for the triangulordinary filtration, one can
take either any of the two spaces with slopes $(-1,-2/3)$, $(-1,1/3)$.
When the $0$-slope is not in $H$, one gets two more options.  The
theorem applies to the first two of these, but not to the possible
latter two.

If $B$ has slopes $-1,-1/2,-1/2,0$, then it is nonordinary, and always
triangulordinary: its filtrations include the two slope $(-1,-1/2)$
spaces, and, if the slope $0$ space is not in $H$, then the two slope
$(-1/2,0)$ spaces are also valid.  (In particular, having slopes equal
to $-1/2$ is {\it not} necessarily an obstruction.)  The theorem
applies to the first filtrations, and not to the second ones.

Let $B$ have slopes $-1,-1,-1,-1/2,-1/2,0,0,0$, and assume $\vphi$
acts irreducibly on its pure-slope spaces.  Then $B$ is not
triangulordinary, simply because there are no $\vphi$-stable subspaces
with half the total dimension.

We leave it to the reader to examine, when the residue field $k$ is
finite, what additional possibilities occur after enlarging the
coefficient field $E$ to break the pure-slope spaces into extensions
of one-dimensional $\vphi$-stable spaces.
\end{exmp}

In the following example, fix a coefficient field $E$.

\begin{exmp}[Elliptic modular eigenforms]\label{exmp-MFs}
Let $f \in S_k(\Ga_1(M),\psi;E)$ be a normalized elliptic modular
cuspidal eigenform such that $k \geq 2$, having $q$-expansion $\sum
a_nq^n$.  Deligne has associated to $f$ a $2$-dimensional $E$-valued
representation of the absolute Galois group of $\bbQ$, which is
unramified away from $Mp\infty$ and de~Rham at $p$.  It is absolutely
irreducible, so it is characterized (up to a scalar multiple) by its
characteristic polynomials; by Chebotarev, it is enough to know the
polynomials of the Frobenius elements $\Frob_\ell$ at primes $\ell
\nmid Mp$.  For such $\ell$, one has
\[
\text{trace}(\Frob_\ell) = a_\ell
\quad\text{ and }\quad
\det(\Frob_\ell) = \ell^{k-1}\psi(\ell).
\]
One vagueness that typically makes these matters confusing is whether
$\Frob_\ell$ refers to the arithmetic or the geometric Frobenius.  In
the case where the above equations describe the arithmetic Frobenius,
we say that the representation is the {\it homological} normalization,
and denote it $V_f^\text{hom}$.  When they apply to the geometric
Frobenius, we say that the representation is the {\it cohomological}
variant, and we denote it $V_f^\text{coh}$.  These names originate in
whether $V_f^?$ is found within the \'etale homology or cohomology of
Kuga--Sato varieties, respectively.  In either case, we only consider
the restriction of the $G_\bbQ$-action to a decomposition group
$G_\bbQp$.

In what follows, we make the following hypothesis: $f$ is semistable
at $p$, and the operator $\vphi$ on $\bfD_\st(V_f^\text{coh})$ has
{\it distinct} eigenvalues $\la,\mu$ lying in $E$.  (This is
equivalent to requiring the same on $\bfD_\st(V_f^\text{hom})$, with
the the roots $\la\inv,\mu\inv \in E$.)  We order the roots so that
$\ord_p \la \leq \ord_p \mu$; one has $\ord_p \la + \ord_p \mu = k-1$,
with $\ord_p \la = 0$ if and only if $f$ is ordinary at $p$.

The cohomological normalization has
\[
\bfD_\st(V_f^\text{coh}) = E \cdot e_\la \oplus E \cdot e_\mu
\quad \text{with} \quad
\left\{\begin{array}{ll}
  \vphi(e_\nu) = \nu e_\nu, \\
  \bfD = F^0 \supsetneq F^1 = \cdots = F^{k-1} \supsetneq F^k = 0.
\end{array}\right.
\]
The ``weak admissibility'' condition means that $e_\la \notin F^1$,
and $e_\mu \notin F^1$ unless possibly if $\ord_p \mu = k-1$ (in which
case $f$ is split ordinary at $p$).  The monodromy operator $N$ is
only nonzero when $p \mid M$; if $N \neq 0$ then $\ord_p \mu = \ord_p
\la + 1$, and $N$ is determined by $N(e_\mu) = e_\la$ and $N(e_\la) =
0$ (up to rescaling $e_\la$).

Since the nonzero gradings for the Hodge filtration are $0,k-1$, each
with one-dimensional graded, the nonzero triangulordinary gradings are
$1-k,0$, each with one-dimensional graded.  Thus we consider the
rank-one $\vphi$-stable subspaces of $\bfD_\st(V_f^\text{coh})$, and
their corresponding $(\vphi,\Ga_\bbQp)$-modules.  Let $\nu$ be one of
$\la$ or $\mu$ and let $\nu'$ be the other; let $D_\nu = E \cdot e_\nu
\subseteq \bfD_\st$ and $D'_\nu = \bfD_\st/D_\nu$.  Then, for
comparison to the cyclotomic character, one has the following table.
The parenthetical values are used precisely when $f$ is split ordinary
at $p$ and $\nu = \mu$.

\vskip 12pt

\begin{tabular}{|l|l||l|l|}
\hline
$\vphi$ on $D_\nu$
  & $\Gr^? \neq 0$
  & $\vphi$ on $(D_\nu)^\dag_\rig$
  & $\Ga_\bbQp$ on $(D_\nu)^\dag_\rig$ \\
\hline
$\nu$ & $0$ ($k-1$) & $\nu$ (${\nu'}\inv$) & $1$ ($\chi_\cycl^{1-k}$) \\
\hline
\hline
$\vphi$ on $D'_\nu$
  & $\Gr^? \neq 0$
  & $\vphi$ on $(D'_\nu)^\dag_\rig$
  & $\Ga_\bbQp$ on $(D'_\nu)^\dag_\rig$ \\
\hline
$\nu'$ & $k-1$ ($0$) & $\nu\inv$ ($\nu'$) & $\chi_\cycl^{1-k}$ ($1$)\\
\hline
\end{tabular}

\vskip 12pt

\noindent The triangulordinary hypothesis on $F^*$ requires that $F^0$
not meet the Hodge filtration $H^1 = H^{k-1} \subset D_\st \otimes_F
K$, and, if this holds, then one obtains for free that $\Gr^{1-k}
D_\st = D_\st/F^0$ has induced Hodge--Tate weight $k-1$, as is
required.  Examining the above table, we see that $e_\nu$ always
defines a trianguline filtration, and that $e_\nu$ defines a
triangulordinary filtration except in the parenthetical (split
ordinary) case.  In the split ordinary case, taking $\nu = \la$ still
gives a triangulordinary filtration.  Also, theorem
\ref{thm-local-main}(2) always applies, because the only nonzero
$\Gr^n$ with $n \leq 0$ is with $n=0$, and the only $\vphi$-slope
occurring there is nonnegative.

We obtain descriptions of the homological normalization by taking
$E$-linear duals of everything above.  Namely, $\bfD_\st$ has
\[
\bfD_\st(V_f^\text{hom})
  = E \cdot e_{\la\inv} \oplus E \cdot e_{\mu\inv}
\text{ with }
\left\{\begin{array}{ll}
  \vphi(e_{\nu\inv}) = \nu\inv e_{\nu\inv}, \\
  \bfD = F^{1-k} \supsetneq F^{2-k} = \cdots = F^0 \supsetneq F^1 = 0.
\end{array}\right.
\]
The ``weak admissibility'' condition means that $e_{\mu\inv} \notin
F^1$, and $e_{\la\inv} \notin F^1$ unless possibly if $\ord_p \la = 0$
(in which case $f$ is split ordinary at $p$).  The monodromy operator
$N$ is only nonzero when $p \mid M$; if $N \neq 0$ then $\ord_p \mu =
\ord_p \la + 1$, and $N$ is determined by $N(e_{\la\inv}) =
e_{\mu\inv}$ and $N(e_{\mu\inv}) = 0$ (after perhaps rescaling
$e_{\mu\inv}$).  Note the role reversal between $\mu$ and $\la$; this
is only because $\ord_p \mu\inv \leq \ord_p \la\inv$.

Our triangulordinary filtration must have one-dimensional nonzero
gradeds in degrees $0,1$, and so we consider the rank-one
$\vphi$-stable subspaces of $\bfD_\st(V_f^\text{hom})$, and their
corresponding $(\vphi,\Ga_\bbQp)$-modules.  Let $\nu$ be one of $\la$
or $\mu$ and let $\nu'$ be the other; let $D_{\nu\inv} = E \cdot
e_{\nu\inv} \subseteq \bfD_\st$ and $D'_{\nu\inv} =
\bfD_\st/D_{\nu\inv}$.  Again, we have the following table.  The
parenthetical values are used precisely when $f$ is split ordinary at
$p$ and $\nu\inv = \la\inv$.

\vskip 12pt

\begin{tabular}{|l|l||l|l|}
\hline
$\vphi$ on $D_{\nu\inv}$
  & $\Gr^? \neq 0$
  & $\vphi$ on $(D_{\nu\inv})^\dag_\rig$
  & $\Ga_\bbQp$ on $(D_{\nu\inv})\dag_\rig$ \\
\hline
$\nu\inv$ & $1-k$ ($0$) & $\nu'$ ($\nu\inv$) & $\chi_\cycl^{k-1}$ ($1$) \\
\hline
\hline
$\vphi$ on $D'_{\nu\inv}$
  & $\Gr^? \neq 0$
  & $\vphi$ on $(D'_{\nu\inv})^\dag_\rig$
  & $\Ga_\bbQp$ on $(D'_{\nu\inv})^\dag_\rig$ \\
\hline
${\nu'}\inv$ & $0$ ($1-k$) & ${\nu'}\inv$ ($\nu$) & $1$ ($\chi_\cycl^{k-1}$)
\\
\hline
\end{tabular}

\vskip 12pt

\noindent In particular, we see that $e_{\nu\inv}$ always defines a
trianguline filtration, and that $e_{\nu\inv}$ defines a
triangulordinary filtration except in the parenthetical (split
ordinary) case.  In the split ordinary case, taking $\nu = \mu\inv$
still gives a triangulordinary filtration.  As concerns Theorem
\ref{thm-local-main}, $\Gr^{1-k}$ always has nonnegative slope, and
hence presents no obstruction.  But $\Gr^0$ has slope $\ord_p \nu +
1-k$, which is equal to $-1$ when $\ord_p \nu = k-2$.  Thus, the
theorem does not apply to modular forms with triangulordinary
filtration determined by a $\vphi$-eigenvalue of slope $k-2$.

We invite the reader to check that the above conclusions for
$V_f^\text{hom}$ agree with the conclusions made, in the case $k=2$,
for $T_pB_f \otimes \bbQ$, where $B_f$ is the corresponding modular
abelian variety.
\end{exmp}

\begin{rem}
The example above should generalize readily to Hilbert modular forms.
\end{rem}

\begin{rem}
It is {\it extremely unusual} to naturally encounter a theorem that
applies only to modular forms with a $U_p$-slope $\neq k-2$, as does
Theorem \ref{thm-local-main} for $V_f^\text{hom}$.  The only special
slopes are usually $0$, $k-1$, and $\frac{k-1}{2}$.
\end{rem}

\section{Variational program}\label{sect-global}

In this section we describe a conjectural program for obtaining
triangulordinary filtrations, and hence Selmer groups, for families of
Galois representations.  Our primary guide here is Greenberg's
variational viewpoint, described in \cite{G2}.

As a main example, we consider the eigencurve of Coleman--Mazur.  We
show how our program would recover results of Kisin (see \cite{Ki}),
and interpolate his Selmer groups for overconvergent modular forms of
finite slope into a Selmer module over the entire eigencurve.  Note
that we use the {\it homological} normalization to maintain
consistency with Greenberg on the ordinary locus, and with the
statements of Kisin.

We retain the notations and conventions of the preceding sections,
with the additional assumption that our local fields $K$ have {\it
finite} residue fields (since this is required by Berger--Colmez in
\cite{BC}).  A careful reading of Berger--Colmez might allow this
restriction to be removed.

\subsection{Interpolation of $(\vphi,\Gamma_K)$-modules}

We construct a families of $(\vphi,\Ga_K)$-modules over rigid analytic
spaces corresponding to families of $p$-adic representations of $G_K$,
using the theory of Berger--Colmez.  In their work, the base of the
family is a $p$-adic Banach space $S$.  By an $S$-representation of
$G_K$, we mean a finite free $S$-module $V$ equipped with a
continuous, $S$-linear $G_K$-action.  In order to get a $p$-adic Hodge
theory for $V$, we must assume the mild condition that $S$ is a {\it
coefficient algebra} as in \cite[\S2.1]{BC}

We require some terminology from $p$-adic functional analysis.  Given
a $p$-adic Banach algebra $S$ with norm $|\cdot|_S$ and a Fr\'echet
space $T$ with norms $\{|\cdot|_i\}_{i \in I}$, we define norms
$\{|\cdot|_{S,i}\}_{i \in I}$ on $S \otimes_\bbQp T$ by
\[
|x|_{S,i}
 = \inf_{x = \sum_{k=1}^n s_k \otimes t_k}
   \left(\max_k |s_k|_S \cdot |t_k|_i \right).
\]
This makes $S \otimes_\bbQp T$ into a pre-Fr\'echet space, and we
declare $S \wh{\otimes} T$ to be its Fr\'echet completion, consisting
of equivalence classes of sequences that are simultaneously Cauchy
with respect to all the norms.  If $T$ is instead the direct limit of
the Fr\'echet spaces $\{T^j\}_{j \in J}$ (henceforth, we say {\it $T$
is LF}), we define $S \wh{\otimes} T$ to be the direct limit of the $S
\wh{\otimes} T^j$, each of the latter terms being defined above.

In particular, the above definitions apply to $T =
B^{\dag,r}_{\rig,K}$, which is Fr\'echet: there are norms $|\cdot|_s$
on $B^{\dag,r}_{\rig,K}$, for $0 < s \leq r$, corresponding to the
$\sup$ norms on the annuli $\ord_p(X) = s/e_K$, which can be described
easily in terms of the expansion of $f$ in $\pi_K$.  The definitions
also apply to $T = B^\dag_{\rig,K}$, which is the direct limit of the
$B^{\dag,r}_{\rig,K}$ for $r > 0$, and hence is LF.

We write $\Spm S$ for the collection of maximal ideals of $S$, and
when we have a label $x$ for an element $\fkm_x \in \Spm S$, we
abusively refer to $\fkm_x$ by $x$.  If $M$ is an $S$-module, we write
$M_x$ for $M \otimes_S S/\fkm_x$ throughout this section.

Applying $\otimes_{S \wh{\otimes} B^\dag_K} (S \wh{\otimes}
B^\dag_{\rig,K})$ to \cite[Th\'eor\`eme A]{BC}, as in \S6.2 of {\it
loc.\,cit.}, we see that one can canonically associate to $V$ a
locally free $S \wh{\otimes} B^\dag_{\rig,K}$-module
$\bfD^\dag_\rig(V)$ of rank equal to $\rank_S V$, equipped with
commuting, continuous, semilinear actions of $\vphi$ and $\Ga_K$, with
the property that $\bfD^\dag_\rig(V)_x$ is canonically isomorphic to
$\bfD^\dag_\rig(V_x)$ in $\bfM(\vphi,\Ga_K)_{/B^\dag_{\rig,K}}$ for
all $x \in \Spm S$.

Let us globalize this result.  Recall that our coefficient field for
Galois representations is $E$, a finite extension of $\bbQ_p$.  Let
$\scrX/E$ be a reduced, separated rigid analytic space with structure
sheaf $\calO_\scrX$.  The very notion of a rigid analytic space is
that $\scrX$ is built from its admissible affinoid subdomains $\scrU =
\Spm S$, so that a sheaf is determined by its restriction to an
admissible covering by admissible affinoid opens (for brevity, we will
call this a {\it good cover}), and a quasi-coherent sheaf is
determined by its {\it values} on such an open cover.  Note that an
affinoid algebra is naturally a $p$-adic Banach algebra; in fact, a
reduced affinoid algebra is a coefficient algebra.  A quasi-coherent
sheaf of $\calO_\scrX$-modules $M$ is said to be locally free of
finite rank (resp.\ locally free Banach, locally free Fr\'echet,
locally free LF) if $\scrX$ admits a good cover by opens $\scrU$ for
which $\Ga(\scrU,M) \approx \Ga(\scrU,\calO_\scrU) \wh{\otimes}
T_\scrU$, where $T_\scrU$ is a finite-dimensional $\bbQ_p$-vector
space (resp.\ a Banach space, a Fr\'echet space, an LF space).

For a commutative $\bbQ_p$-algebra $R$ that is finite-dimensional
(resp.\ Banach, Fr\'echet, LF) as a $\bbQ_p$-module, we define
$\calO_\scrX \wh{\otimes} R$ to be the locally free sheaf of
finite-dimensional (resp.\ Banach, Fr\'echet, LF)
$\calO_\scrX$-algebras with $T_\scrU = R$, as above, for every
affinoid subdomain $\scrU \subseteq \scrX$.  A locally free
$\calO_\scrX \wh{\otimes} R$-module of finite rank is a quasicoherent
sheaf $M$ of $\calO_\scrX \wh{\otimes} R$-modules on $\scrX$ such
that, for $\scrU$ ranging over some good cover of $\scrX$, each
$\Ga(\scrU,M)$ is free of finite rank over $\Ga(\scrU,\calO)
\wh{\otimes} R$.

In particular, we have defined the sheaf of rings $\calO_\scrE
\wh{\otimes} B^\dag_{\rig,K}$, and we have a notion of locally free
$\calO_\scrE \wh{\otimes} B^\dag_{\rig,K}$-module of finite rank.

Suppose we are given $\scrX/E$ as above, and a locally free
$\calO_\scrX$-module $\scrV$ of finite rank $d$ equipped with a
continuous, linear action $G_K$ of $\calO_\scrX$.  If $\scrU = \Spm S
\subset \scrX$ is any admissible affinoid neighborhood over which
$\scrV$ is free, then we can apply the theory of Berger--Colmez to
obtain $\bfD^\dag_\rig(\scrV|_\scrU)$.  One can check that the
constructions of Berger--Colmez are compatible with localization to
admissible affinoid subdomains of $\scrU$, so that the rule $\scrU
\mapsto \bfD^\dag_\rig(\scrV|_\scrU)$ on admissible affinoid
subdomains such that $\scrV|_\scrU$ is free determines a sheaf of
locally free $\calO_\scrX \wh{\otimes} B^\dag_{\rig,K}$-modules of
rank $d$ on $\scrX$, which we call $\scrD$.  It is equipped with
commuting, continuous, semilinear actions of $\vphi$ and $\Ga_K$, and
satisfies $\scrD_x \cong \bfD^\dag_\rig(\scrV_x)$ for all points $x
\in \scrX$.  (When writing ``$x \in \scrX$'', we always mean $x$ is a
physical point of $\scrX$, in the sense of Tate's rigid analytic
spaces.  The residue field $E(x)$ at $x$ is always finite over
$\bbQ_p$, since we have assumed $E/\bbQ_p$ finite.)

\subsection{Interpolation of the triangulordinary theory}
\label{sect-variational-tord}

It is our desire, in the future, to prove that a family consisting of
triangulordinary representations admits a corresponding family of
triangulordinary filtrations.  We state our goal in a preliminary
form, as the following conjecture.

Let $\scrX/E$ be a reduced, separated rigid analytic space, and let
$\scrD$ be a locally free sheaf of $(\vphi,\Ga_K)$-modules over
$\calO_\scrX \wh{\otimes} B^\dag_{\rig,K}$, of rank $d$.  Let $0 < c <
d$ be an integer.  Consider the functor that associates to an
$\scrX$-space $f\cn \scrU \to \scrX$ the collection of
$(\vphi,\Ga_K)$-stable $\calO_\scrU \wh{\otimes}
B^\dag_{\rig,K}$-local direct summands of $f^*\scrD$ rank $c$.

\begin{conj}\label{conj-kisin}
The functor described above is representable by a locally finite type
morphism $p_c \cn \scrX(c) \to \scrX$.  For each $x \in \scrX$, the
fiber $\scrX(c)_x$ is a finite union of quasiprojective flag varieties
over the residue field $E(x)$.
\end{conj}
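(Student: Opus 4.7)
The plan is to construct $p_c$ by localizing on $\scrX$, carrying out a fiber-by-fiber analysis using the slope filtration, and then showing that the fibers glue to a locally finite-type morphism. First I would reduce to the case where $\scrX = \Spm S$ is a reduced affinoid and $\scrD$ is free of rank $d$ over $S \wh{\otimes} B^\dag_{\rig,K}$, and descend to a model $\scrD^{r_0}$ over $S \wh{\otimes} B^{\dag,r_0}_{\rig,K}$ following the techniques of \cite{BC}. A putative rank-$c$ $(\vphi,\Ga_K)$-stable direct summand of $\scrD$ can then be described, via uniqueness of the Cherbonnier-type lattices, by its restriction to $\scrD^{r_0}$ together with the closed conditions of $\vphi$- and $\Ga_K$-stability.

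Next I would invoke the slope filtration theorem in families. By \cite{K} the Newton polygon is semicontinuous on rigid families of $\vphi$-modules, and by its uniqueness the slope filtration propagates in families; after stratifying $\scrX$ by Newton polygon one may assume a globally defined $(\vphi,\Ga_K)$-stable slope filtration $\Fil^* \scrD$ whose graded pieces $\Gr^\la \scrD$ are isoclinic of rank $d_\la$. Any $(\vphi,\Ga_K)$-stable rank-$c$ summand is automatically compatible with this filtration, hence labelled by a partition $c = \sum_\la c_\la$ with $c_\la \leq d_\la$; summing over the finitely many such partitions reduces the problem to the isoclinic case.

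For the fiber analysis at a point $x \in \scrX$, base change to $\wt{B}^\dag_\rig \otimes_{\bbQ_p} E(x)$ trivializes each isoclinic piece: one writes $\Gr^\la \scrD_x \approx \wt{B}^\dag_\rig \otimes W_\la$ with $W_\la$ a finite-dimensional $E(x)$-vector space carrying commuting actions of $\vphi$ and $\Ga_K$. Via descent, using Cherbonnier's uniqueness and Proposition \ref{ppn-phi-irr}, $(\vphi,\Ga_K)$-stable rank-$c_\la$ direct summands of $\Gr^\la \scrD_x$ correspond bijectively to $(\vphi,\Ga_K)$-stable $c_\la$-dimensional subspaces of $W_\la$, and these are the $E(x)$-points of a closed subscheme of the ordinary Grassmannian cut out by the $\vphi$- and $\Ga_K$-stability conditions, which decomposes into a finite union of partial flag varieties indexed by the combinatorial types of flags compatible with the $(\vphi,\Ga_K)$-action on $W_\la$. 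Taking products over graded pieces and summing over partitions $c = \sum_\la c_\la$ gives the claimed finite union of quasiprojective flag varieties.

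The main obstacle will be converting this fibral picture into an honest rigid-analytic morphism $\scrX(c) \to \scrX$ that is locally of finite type. The underlying difficulty is that the naive ``Grassmannian of rank-$c$ direct summands of $\scrD$'' is infinite-dimensional---at best ind-rigid-analytic---since $\scrD$ has infinite $\calO_\scrX$-rank, so one cannot first construct it and then cut out $(\vphi,\Ga_K)$-stability as a closed subspace. Instead the moduli space must be built directly by imposing $\vphi$-invariance at a finite-dimensional quotient such as $\scrD^{r_0}/t^N\scrD^{r_0}$ or a Wach-module model, where the condition is genuinely finite-dimensional, and then arguing via a family version of Cherbonnier's uniqueness that such finite data uniquely extend to a $(\vphi,\Ga_K)$-stable direct summand of the full $\scrD$. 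In essence this extends Kisin's moduli of finite-slope subspaces \cite{Ki} from rank one to arbitrary rank and from crystalline deformations to general families of $(\vphi,\Ga_K)$-modules, and this passage from infinite-dimensional ambient data to finite-dimensional moduli is the technical heart of the conjecture.
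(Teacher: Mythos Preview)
The statement you are attempting to prove is labeled a \emph{Conjecture} in the paper, and the paper does not prove it. The remark immediately following it makes this explicit: the author says the statement is ``inspired by'' Kisin's results, that it ``may require some slight changes,'' and that even the case $c=d-1$ is only \emph{expected} to follow by translating Kisin's methods. There is therefore no paper proof to compare your proposal against; what you have written is a strategy sketch toward an open problem, and your own final paragraph correctly identifies the unresolved crux (producing a finite-type moduli space out of infinite-dimensional ambient data).

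Beyond that acknowledged gap, there is a concrete error in your fiber analysis. After base change of an isoclinic piece to $\wt{B}^\dag_\rig$, you write $\Gr^\la \scrD_x \approx \wt{B}^\dag_\rig \otimes W_\la$ with $W_\la$ a finite-dimensional $E(x)$-vector space ``carrying commuting actions of $\vphi$ and $\Ga_K$,'' and then claim that $(\vphi,\Ga_K)$-stable rank-$c_\la$ summands correspond to $(\vphi,\Ga_K)$-stable subspaces of $W_\la$. The $\vphi$-part of this is essentially Dieudonn\'e--Manin, but the $\Ga_K$-action does \emph{not} descend to $W_\la$: it is a continuous semilinear action on the full tensor product and genuinely mixes the two factors. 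So ``$\Ga_K$-stable subspaces of $W_\la$'' has no meaning, and your reduction to a closed condition inside an ordinary Grassmannian collapses at exactly this point. (Your appeal to Proposition~\ref{ppn-phi-irr} does not help here: that proposition concerns the $\Ga_K$-isomorphism class of $D_\dif^+$ and says nothing about classifying $(\vphi,\Ga_K)$-stable submodules of $D$.) This is not a technicality---it is precisely why the problem is hard and why the paper leaves it as a conjecture: the $\Ga_K$-stability condition is infinite-dimensional in nature, and the whole content of a proof would be to show that it nonetheless cuts out something of finite type.
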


\begin{rem}
This conjecture is inspired by results of Kisin, notably
\cite[Proposition 5.4]{Ki}.  The statements proved there involve a
number of technical hypotheses; thus, the above conjecture may require
some slight changes.  Under Kisin's hypotheses, we expect that his
methods may be translated into the $(\vphi,\Ga_K)$-module language to
establish the case where $c=d-1$.  In any case, to prove the
conjecture, it would suffice to work locally: to assume $\scrX$ is
affinoid, and to construct $\scrX(c)$ with the desired property for
maps $\scrU \to \scrX$ with $\scrU$ affinoid.
\end{rem}

The underlying {\it flag} of a filtration $F$ is simply the poset of
its constituents $F^i$, forgetting the indices.  By a {\it shape}
$\sigma$ (of rank $d$, and $r$ constituents), we mean a finite
sequence of dimensions $\{d = d_0 > d_1 > \cdots > d_{r+1} = 0\}$.  We
say that a flag $F$ of an object $D$ has shape $\sigma$ if $\rank D =
d$ and its constituents have dimensions given precisely by the
dimensions $d_i$ of $\sigma$; a filtration has shape $\sigma$ if its
underlying flag does.  We can consider the integer $c$ from above as
the shape with one constituent given by $\{d > c > 0\}$.  If $\sigma$
is an arbitrary shape of rank $d$ then, by inducting on the number of
constituents, Conjecture \ref{conj-kisin} implies the existence of a
locally finite type morphism $p_\sigma \cn \scrX(\sigma) \to \scrX$
classifying $(\vphi,\Ga_K)$-stable flags in $\scrD$ with shape
$\sigma$.  We write $\scrD(\sigma) := p_\sigma^*\scrD$, and denote by
$F(\sigma)$ the corresponding universal flag in $\scrD(\sigma)$ of
shape $\sigma$.

\begin{rem}
In the case $\sigma$ is the shape of a complete flag,
Bella\"iche--Chenevier give in \cite[Proposition 2.5.7]{BC2} an
affirmative answer to the Conjecture \ref{conj-kisin}, at least
infinitesimally locally: they prove the representability of the
related deformation problem.  They go on to undertake a considerably
detailed study of what amounts to the formal completion of
$\scrX(\sigma)$ at a crystalline point.
\end{rem}

We go on to explain how Conjecture \ref{conj-kisin}, in the more
general form just explained, should lead to triangulordinary
filtrations on the level of families.  First, we make precise what the
latter means.

We call $\scrD$ {\it pretriangulordinary} of shape $\sigma$ if there
is a Zariski dense subset $\scrX^\alg \subset \scrX$, all of whose
points $x \in \scrX^\alg$ satisfy the following property: the
$(\vphi,\Ga_K)$-module $\scrD_x$ is triangulordinary, with some (hence
every) triangulordinary filtration of shape $\sigma$.  We call $\scrD$
{\it triangulordinary} with respect to $F$, where $F^* \subseteq
\scrD$ is a decreasing, separated and exhaustive filtration by
$(\vphi,\Ga_K)$-stable $\calO_\scrX \wh{\otimes}
B^\dag_{\rig,K}$-local direct summands, if there is a Zariski dense
subset $\scrX^\alg \subset \scrX$ with the following property: for all
points $x \in \scrX^\alg$, the image $F_x$ has underlying flag equal
to the underlying flag of some triangulordinary filtration of
$\scrD_x$.  (For such $x$, the choices of indices making $F_x$ into
the triangulordinary filtration are then uniquely determined by the
Hodge--Tate weights.)  Clearly, if $\scrD$ is triangulordinary with
respect to $F$, and $F$ has shape $\sigma$, then $\scrD$ is
pretriangulordinary of shape $\sigma$.

When $\scrD = \bfD^\dag_\rig(\scrV)$, we say that $\scrV$ is
pretriangulordinary of shape $\sigma$ (resp.\ triangulordinary with
respect to $F$) if the said condition holds for $\scrD$.

Suppose $\scrD$ is pretriangulordinary of shape $\sigma$, and let
$p_\sigma \cn \scrX(\sigma) \to \scrX$ classify $(\vphi,\Ga_K)$-stable
flags of shape $\sigma$, as above.  We let $\scrX_\tord^\alg =
\scrX_\tord^\alg(\sigma)$ be the set of $x \in p_\sigma\inv
\scrX^\alg$ such that $F(\sigma)_x$ is the underlying flag of a
triangulordinary filtration on $\scrD_x$, and we let $\scrX_\tord$ be
the Zariski closure of $\scrX_\tord^\alg$ inside $\scrX$.  We write
$\scrD_\tord$ (resp.\ $F_\tord$) for the restriction of
$\scrD(\sigma)$ (resp.\ $F(\sigma)$) to $\scrX_\tord$.  By
construction, $\scrD_\tord$ is a triangulordinary family over
$\scrX_\tord$ of shape $\sigma$ with respect to any choice of indices
making the flag $F_\tord$ into a filtration.  Moreover, by the
hypothesis that $\scrX$ is pretriangulordinary, the restriction of
$p_\sigma$ is a surjection $\scrX_\tord^\alg \twoheadrightarrow
\scrX^\alg$, so that $\scrX_\tord$ is rather substantial in comparison
with $\scrX$.

\begin{rem}\label{rem-indices}
It is not clear from the above discussion whether the construction
singles out a choice of indices for the triangulordinary flag
$F_\tord$ on $\scrD_\tord$.  We would hope for the ``most
appropriate'' choice of indexing to have the following property: for
all $x \in \scrX^\alg$, the constituent $F^1$ has image in $\scrD_x$
equal to the $F^1$ of some triangulordinary filtration on $\scrD_x$
satisfying the hypotheses of Theorem \ref{thm-local-main}(2).  Thus,
the choice of indexing {\it does} affect which Selmer group is
obtained from the definition given in the next section.

Whether a ``most appropriate'' indexing exists, and (if it exists)
which indexing it is, are sensitive to the specification of
$\scrX^\alg \subset \scrX$.  See Remark \ref{rem-choosing-indices} for
an example.  (Although we have not stressed this, the construction of
$\scrX_\tord$ itself depends on $\scrX^\alg$.)
\end{rem}

\begin{exmp}\label{exmp-univ-deformation}
Assume the notation at the end of \S\ref{sect-definitions}, with
$K=\bbQ$ and $p>2$.  Fix a $2$-dimensional, irreducible, odd
representation $\ov{\rho}$ of $G_{\bbQ,S}$ with values in the residue
field $k_E$ of $E$.  We take for $\scrX$ the generic fiber of $\Spf
R^\text{univ}_S(\ov{\rho})$, where $R^\text{univ}_S(\ov{\rho})$ is the
universal $\calO_E$-valued deformation ring with ``unramified'' local
conditions away from $S$, and no conditions at $S$.  We take for
$\scrV$ the universal representation on this space, and $\scrD =
\bfD^\dag_\rig(\scrV|_{G_p})$.  Since the set $\scrX^\alg$ of points
$x \in \scrX$ for which $\scrV_x|_{G_p}$ is semistable with distinct
Frobenius eigenvalues is Zariski dense, one can deduce that $\scrD$ is
pretriangulordinary of shape $\sigma = \{2 > 1 > 0\}$.

Granting Conjecture \ref{conj-kisin}, we expect that
$\scrX_\tord(\sigma)$ is none other than the eigensurface of
Coleman--Mazur (discussed towards the end of
\S\ref{sect-expectations}).  Its restriction to the subspace of
$\scrX$ having a vanishing Sen weight is expected to be the
eigencurve, obtained as the resolution of the infinite fern of
Gouv\^ea--Mazur \cite{GM} at its double points.  Thus, our setup ought
to give a clean realization of Kisin's hope of constructing general
eigenvarieties purely Galois-theoretically.
\end{exmp}

\subsection{Selmer groups via variation}
\label{sect-variational-selmer}

In order to define Selmer groups of families of Galois
representations, we need to give a meaning to the Galois cohomology of
a family.  Let $G$ be a profinite group acting continuously on a
locally free module $\scrV$ of finite rank over a rigid analytic space
$\scrX$.  We let
\[
H^i(G,M) := \Ext^i_{\calO_\scrX[G]}({\bf 1},M),
\]
the Yoneda group in the category of locally free $\calO_\scrX$-modules
with continuous $G$-actions.  As is customary, when $G = G_K$ is the
absolute Galois group of a field $K$, we write $H^i(K,M)$ for
$H^i(G_K,M)$.

We now resume the notation at the end of \S\ref{sect-definitions}.
Namely, $K/\bbQ$ is a finite extension, $S$ is a finite set of places,
and we have algebraic closures $\ov{K}_v$ containing $K_S$ and maps
$G_v \to G_{K,S}$, for $v \in S$.

We let $\scrX/E$ be as in the preceding section, and let $\scrV$ be a
locally free sheaf on $\scrX$ of finite rank, equipped with a
continuous, $\calO_\scrX$-linear $G_{K,S}$-action.  We assume, for
each place $v$ of $K$ with $v \mid p$, that $\scrV|_{G_v}$ is
triangulordinary with respect to some filtration $F_v^* \subseteq
\scrD_v := \bfD^\dag_\rig(\scrV|_{G_v})$, in the sense described in
\S\ref{sect-variational-tord}.  (Whether or not Conjecture
\ref{conj-kisin} holds, we assume here that we are simply given the
$F_v^*$.)  For such $v$ we define the local condition at $v$ to be
\begin{align}\label{eqn-selmer1}
H^1_\tord(K_v,\scrV) = \ker \Bigg[
H^1(K_v,\scrV)
&=
\Ext^1_{\calO_\scrX[G_v]}({\bf 1},\scrV) \nonumber \\
&\stackrel{\star}{\to}
\Ext^1_{\vphi,\Ga_{K_v}/\calO_\scrX \wh{\otimes}
  B^\dag_{\rig,K_v}}({\bf 1}, \scrD_v) \\
&\to
\Ext^1_{\vphi,\Ga_\wh{K_v^\unr}/\calO_\scrX \wh{\otimes}
  B^\dag_{\rig,\wh{K_v^\unr}}}({\bf 1}, (\scrD_v/F_v^1)_\wh{K_v^\unr})
\Bigg]. \nonumber
\end{align}
Assuming that, for $x \in \scrX^\alg$, the specialization $(F_v^1)_x$
is the $F^1$ of a triangulordinary filtration on $\scrV_x$, the above
definition provides an interpolation over all of $\scrX$ of the ``g+''
Bloch--Kato local conditions at the points of $\scrX^\alg$.  By
Proposition \ref{ppn-local-BK}, at all such points this agrees with
the ``g'' local condition, and at most such points this agrees with
the ``f'' condition.  Thus, it is reasonable to define the Selmer
group of $\scrV$ over $\scrX$ to be
\begin{multline}\label{eqn-selmer2}
H^1_\tord(K,\scrV) = \ker \Bigg[
H^1(G_{K,S},\scrV) \\
\to
\bigoplus_{v \in S,\ v \nmid p}
\frac{\displaystyle H^1(K_v,\scrV)}{\displaystyle  H^1_\unr(K_v,\scrV)}
\oplus \bigoplus_{v \mid p}
\frac{\displaystyle H^1(K_v,\scrV)}{\displaystyle  H^1_\tord(K_v,\scrV)}
\Bigg].
\end{multline}

\begin{rem}
The map labeled $\star$ in Equation \ref{eqn-selmer1} is not known to
be an isomorphism.  In fact, in contrast to the situation of Theorem
\ref{thm-phigamma-equiv}, the map $\bfD^\dag_\rig$ from families of
Galois representations to \'etale families of $(\vphi,\Ga_K)$-modules
is {\it not} an equivalence of categories.  Chenevier has given the
following counterexample.  Denote by $K\bra{\ul{T}}$ the Tate algebra
over $K$ in the variables $\ul{T}$.  Let $D = \bbQ_p\bra{T,T\inv}
\wh{\otimes}_{\bbQ_p} B^\dag_{\rig,K} \cdot e$, so that $D$ has rank
$1$ with basis element $e$, with actions given by $\Ga_K \cdot e = e$
and $\vphi(e) = Te$.  There is no Galois representation $V$ over
$\bbQ_p\langle T,T\inv \rangle$ for which $\bfD^\dag_\rig(V) = D$.

We ask whether it is still the case that $\star$ is an isomorphism: if
two $(\vphi,\Ga_K)$-modules over $S \wh{\otimes} B^\dag_{\rig,K}$ come
from $S$-representations of $G_K$, does every extension between them
come from an $S$-representation?  In any case, for every $x \in
\scrE^0$ the diagram
\begin{equation}\label{eqn-specialize-selmer}\begin{array}{ccc}
H^1(K_v,\scrV) & \to & \Ext^1({\bf 1},\scrD) \\
\downarrow & & \downarrow \\
H^1(K_v,\scrV_x) & \stackrel{\sim}{\to} & \Ext^1({\bf 1},\scrD_x)
\end{array}\end{equation}
commutes, so we at least know that we are imposing the correct local
condition, specialization-by-specialization, everywhere we are able
to.
\end{rem}

We also obtain notions of Selmer groups $H^1_\tord(K,\scrV_x)$ for
{\it all} specializations $\scrV_x$ of $\scrV$ with $x \in \scrX$:
namely, we define $F^1_{x,v}$ to be $(F^1_v)_x$, and add subscripts
$x$ everywhere in Equations (\ref{eqn-selmer1}--\ref{eqn-selmer2}).
It follows from the commutativity of Diagram
\ref{eqn-specialize-selmer} above that there is a natural
specialization map
\[
H^1_\tord(K,\scrV)_x \to H^1_\tord(K,\scrV_x).
\]
for each $x \in \scrX$.  Perhaps the most important open question in
our program is whether an analogue of Mazur's control theorem holds:
when can we bound the kernel and cokernel of the above map?  Can this
bounding be achieved, uniformly for $x$ varying through a substantial
subset of $\scrX$?  Although we strongly desire to check this in a
concrete setting, at present we cannot handle any particular
nonordinary case.

We go on now to discuss in detail our model example: the eigencurve of
Coleman--Mazur.

\subsection{Review of the eigencurve}
\label{sect-review-eigencurve}

We continue with the notations of the end of \S\ref{sect-definitions}.
We assume $K=\bbQ$, and we fix positive integer $N$ not divisible by
$p$, which we call the tame level.  We take for $S$ the set of primes
dividing $p$ and $N$, together with the place $\infty$.

By the weight space $\scrW$ we mean the rigid analytic space over
$\bbQ_p$ arising as the generic fiber of $\Spf \bbZ_p[\![\bbZ_p^\times
\times (\bbZ/N)^\times]\!]$.  Its points $\scrW(R)$ correspond to
continuous characters of the form $\bbZ_p^\times \times
(\bbZ/N)^\times \to R^\times$.  By class field theory, one can
consider $\scrW$ as being equipped with a free rank $1$ bundle $\scrT$
on which $G_{\bbQ,S}$ acts through its universal character.  We let
$\scrW^\alg$ consist of those points $w \in \scrW$ corresponding to
characters having the form $a \mapsto a^{k_w}$ on some open subgroup
of $\bbZ_p^\times \subseteq \bbZ_p^\times \times (\bbZ/N)^\times$,
with $k_w$ an integer.  Clearly, $\scrW$ is triangulordinary of shape
$\{1 > 0\}$.

The {\it eigencurve} $\scrE = \scrE_{p,N}$, defined in \cite{CM} in
the case $p > 2$ and $N = 1$, and extended to general $p$ and $N$ by a
variety of authors, is the following object.  It is a rigid analytic
space over $\bbQ_p$, locally-on-the-base finite over $\scrW \times
(\scrB_1(0) \bs \{0\})$, and locally-in-the-domain finite flat over
$\scrW$.  Here, $\scrB_1(0)$ is the closed unit disk around the
origin, $\scrB_1(0) = \Spm \bbQ_p\bra{U_p}$.  The map to $\scrW$ is
called the weight (or, more precisely, weight-nebentypus), the map to
$\scrB_1(0)$ is called the $U_p$-eigenvalue, and the latter's
composite with the valuation map is called the slope.  Finally,
$\scrE$ parameterizes a universal rigid analytic family of pairs
$(f,\alpha)$ with $f$ a $p$-adic overconvergent elliptic modular
eigenform of tame level $N$ and $\alpha$ a nonzero (``finite slope'')
$U_p$-eigenvalue of $f$.  We let $\scrE^\alg$ be the collection of
points $x \in \scrE$ corresponding to pairs $(f_x,\alpha_x)$ with
$f_x$ classical of weight $k_x \geq 2$, and with $f_x$ at worst
semistable at $p$.

We remove two types of bad points on $\scrE$.  Namely, we say that $x
\in \scrE$ has {\it critical slope} if its weight $w$ lies in
$\scrW^\alg$ with $k_w$ as above, $k_w \geq 2$, and $k_w-1$ is the
slope of $x$.  (This agrees with the terminology of
\S\ref{sect-local-ex}, except that it also includes some nonclassical
$x$.)  We say that $x \in \scrE^\alg$ has {\it does not have distinct
eigenvalues} if $\alpha_x$ is a double root of the $p$-Hecke
polynomial of (the newform associated to) $f_x$.  Write $\scrE^0
\subset \scrE$ for the complement of the critical-slope and
not-distinct-eigenvalue loci, and $\scrE^{0,\alg} = \scrE^0 \cap
\scrE^\alg$.  (One can do slightly better as regards critical slope,
and instead look at the complement of the points in the image of the
$\theta^{k-1}$-map for each $k \geq 2$.)

The constructions of $\scrE$ give rise to a locally free rank $2$
bundle $\scrV^0$ over $\scrE^0$, equipped with a continuous,
$\calO_{\scrE^0}$-linear action of $G_{\bbQ,S}$.  This representation
has the property that, for any $x \in \scrE^{0,\alg}$, the fiber
$\scrV^0_x$ is isomorphic to the Galois representation
$V_{f_x}^\text{hom}$ associated to $f_x$ by Deligne.

\begin{rem}
The reader will note that the Galois representation
$V_{f_x}^\text{hom}$ is defined for every $x \in \scrE$ with $f_x$
classical of weight $k_x \geq 2$.  Our restriction to semistable
points is because they would require modifying the triangulordinary
theory to handle representations that become semistable over an
abelian extension.  We exclude $\scrE^\alg \bs \scrE^{0,\alg}$ from
our consideration only because Kisin does so in \cite{Ki}; we have not
tested whether our theory should make sense at these points.
\end{rem}

We write $\scrD^0 = \bfD^\dag_\rig(\scrV^0|_{G_p})$, so that for $x
\in \scrE^\alg$ one has
\[
\scrD^0_x \cong \bfD^\dag_\rig(\scrV^0_x|_{G_p}) \cong
\bfD^\dag_\rig(V_{f_x}^\text{hom}|_{G_p}).
\]
For every $x \in \scrE^{0,\alg}$ the representation
$V_{f_x}^\text{hom}|_{G_p}$ is semistable, and $\alpha_x$ is a
$\vphi\inv$-eigenvalue $\nu$ on $\bfD_\st(V_{f_x}^\text{hom}|_{G_p})$.
Since we have removed the not-distinct-eigenvalue locus from
$\scrE^0$, $\bfD_\st(V_{f_x}^\text{hom}|_{G_p})$ has distinct
$\vphi$-eigenvalues, so the $\nu$-eigenspace gives rise to a canonical
triangulordinary filtration
\begin{equation}\label{eqn-filtration-x}
\bfD^\dag_\rig(V_{f_x}^\text{hom}|_{G_p}) = F^0_k \supsetneq F^1_x =
F^{k_w-1}_x \supsetneq F^{k_w}_x = 0,
\end{equation}
as in \S\ref{exmp-MFs}, where $w$ is the weight of $x$.  Therefore,
$\scrD^0$ is pretriangulordinary of shape $\{2 > 1 > 0\}$.  From now
on, we denote this particular shape by $\sigma$.

\subsection{Expectations for the eigencurve}
\label{sect-expectations}

We expect that $\scrD^0$ is triangulordinary of shape $\sigma$ in the
following precise sense:

\begin{conj}\label{conj-eigencurve}
There exists a unique filtration
\[
\scrD^0 \supsetneq F^1 \supsetneq 0
\]
by a $(\vphi,\Ga_\bbQp)$-stable locally $\calO_{\scrE^0} \wh{\otimes}
B^\dag_{\rig,\bbQp}$-direct summand $F^1$ of rank $1$ with the
property that, for each $x \in \scrE^{0,\alg}$, $(F^1)_x = F^1_x$
under the identification of $\scrD^0_x \cong
\bfD^\dag_\rig(\scrV_x|_{G_p}) \cong
\bfD^\dag_\rig(V^\text{hom}_{f_x}|_{G_p})$.
\end{conj}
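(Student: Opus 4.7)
The plan is to separate uniqueness from existence. Uniqueness is settled by a density argument, while existence requires a local construction modeled on Kisin's work in \cite{Ki}, glued using uniqueness.

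For uniqueness, suppose $F^1$ and $(F^1)'$ both satisfy the stated conditions. At every $x \in \scrE^{0,\alg}$ one has $F^1_x = (F^1)'_x$ inside $\scrD^0_x$, because the distinct-eigenvalues hypothesis built into the definition of $\scrE^0$ forces the rank-one $(\vphi,\Ga_{\bbQ_p})$-stable direct summand of $\scrD^0_x$ picking out the $\alpha_x$-eigenvalue of $\vphi\inv$ to be canonically unique (cf.\ Equation \ref{eqn-filtration-x}). Then the composite $F^1 \hookrightarrow \scrD^0 \twoheadrightarrow \scrD^0/(F^1)'$ is a morphism of locally free $\calO_{\scrE^0} \wh{\otimes} B^\dag_{\rig,\bbQ_p}$-modules whose specialization vanishes on the Zariski-dense subset $\scrE^{0,\alg}$ of the reduced, separated space $\scrE^0$, and hence vanishes identically; so $F^1 \subseteq (F^1)'$, and symmetry gives equality.

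For existence, work on an admissible affinoid open $\scrU = \Spm S \subset \scrE^0$ over which $\scrV^0$ is free. The universal $U_p$-eigenvalue restricts to a unit $\alpha \in S^\times$, and the goal is to construct a rank-one $(\vphi,\Ga_{\bbQ_p})$-stable $S \wh{\otimes} B^\dag_{\rig,\bbQ_p}$-direct summand $F^1|_\scrU \subset \scrD^0|_\scrU$ inside which $\vphi\inv$ acts through $\alpha$, up to the unit factor encoding the Hodge--Tate twist along the weight map to $\scrW$. By Equation \ref{eqn-filtration-x} such an $F^1|_\scrU$ automatically has the correct specialization at each classical $x \in \scrU \cap \scrE^{0,\alg}$. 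To produce $F^1|_\scrU$ itself one emulates Kisin's construction of overconvergent $U_p$-eigenspaces in \cite{Ki}, but now within the Berger--Colmez formalism of $(\vphi,\Ga_{\bbQ_p})$-modules over a coefficient algebra, as in \cite{BC}. Uniqueness then forces these local pieces to glue to a global $F^1$.

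The hard part is the construction step at non-classical points. At classical $x \in \scrE^{0,\alg}$, semistability of $\scrV^0_x|_{G_p}$ makes $\bfD_\st(\scrV^0_x|_{G_p})$ available together with its Dieudonn\'e--Manin decomposition, so the $\alpha_x$-eigenline can be extracted algebraically. At a generic finite-slope $x \in \scrE^0$ the representation $\scrV^0_x|_{G_p}$ need not even be de~Rham, so this algebraic extraction fails and the ``$\alpha$-eigenspace'' must be built by a purely analytic interpolation within the $(\vphi,\Ga_{\bbQ_p})$-module $\scrD^0|_\scrU$. Compounding this, as noted in the remark following Equation \ref{eqn-selmer1}, the functor $\bfD^\dag_\rig$ is not known to be an equivalence on families, so the construction must live on the $(\vphi,\Ga_{\bbQ_p})$-module side throughout, rather than being transported via a Galois-representation construction of an $F^1$-subrepresentation of $\scrV^0|_{G_p}$. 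We expect Kisin's analytic finite-slope machinery to survive this translation, but that translation is where the real work of the conjecture lies.
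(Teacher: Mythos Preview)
The statement you are attempting to prove is labeled \emph{Conjecture} in the paper, and the paper does not prove it. After stating the conjecture, the author only reformulates it (via the subsheaf $\mathscr{F}$ and via sections of $p_\sigma$), explains how it would follow from and relate to Conjecture~\ref{conj-kisin}, and records the consequence that it would yield Selmer groups over the eigencurve. There is no proof in the paper to compare your proposal against.

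Your outline is a reasonable strategy, and your uniqueness argument by Zariski density on a reduced separated base is the natural one. But your own final paragraph correctly identifies the gap: the existence step at non-classical points is precisely the content of the conjecture, and ``we expect Kisin's analytic finite-slope machinery to survive this translation'' is not a proof. The paper is explicit that this construction is what is being conjectured, not established; see also the remark following the conjecture, which notes that Kisin--Colmez give pointwise trianguline filtrations but \emph{do not directly give us a filtration on the family}. So your proposal is a plan of attack rather than a proof, and the paper offers none either.
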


An equivalent way of formulating the conjecture is as follows.  Let
$\mathscr{F} \subset \scrD^0$ be the subsheaf defined by
\[
\mathscr{F} := \bigcap_{x \in \scrE^{0,\alg}}
 \ker\left[ \scrD^0 \to \scrD^0_x/F_x^1 \right].
\]
Then we may phrase Conjecture \ref{conj-eigencurve} as asserting the
existence of a unique $(\vphi,\Ga_\bbQp)$-stable locally
$\calO_{\scrE^0} \wh{\otimes} B^\dag_{\rig,\bbQp}$-direct summand
$F^1$ of $\scrD^0$ of rank $1$ contained in $\mathscr{F}$.  If so,
then the specialization maps $\scrD^0_x \cong
\bfB^\dag_\rig(\scrV_x|_{G_p})$ automatically identify $(F^1)_x \cong
F_x^1$ for all $x \in \scrE^{0,\alg}$.

Suppose that Conjecture \ref{conj-kisin} holds with $d=2$ and $c=1$,
so that the morphism $p_\sigma \cn \scrE^0(\sigma) \to \scrE^0$
exists.  Then Conjecture \ref{conj-eigencurve} says that the
assignment
\begin{eqnarray*}
\scrE^{0,\alg} & \to & \scrE^{0,\alg}_\tord \subset \scrE^0(\sigma) \\
x & \mapsto & (x,F_x^1)
\end{eqnarray*}
extends uniquely to a section $\scrE^0 \to \scrE^0(\sigma)$ of
$p_\sigma$.  In fact, any such section must have image in
$\scrE^0_\tord$, and we expect that this is the only section of
$p_\sigma|_{\scrE^0_\tord}$.  We envision that $\scrE^0_\tord$ can be
divided into two parts: a component mapping isomorphically onto
$\scrE^0$, and a disjoint union of points, one lying over each $x \in
\scrE^{0,\alg}$ with $f_x$ crystalline at $p$, corresponding to the
``evil twin'' of $(f_x,\alpha_x)$ (as in \cite{GM}).

The reader is advised to take note of the difference between the above
picture and Example \ref{exmp-univ-deformation}.

Conjecture \ref{conj-eigencurve} gives us a definition of the Selmer
group over the eigencurve, as well as Selmer groups for all
finite-slope overconvergent modular eigenforms, as in Equation
\ref{eqn-selmer2}.

\begin{rem}
An arbitrary (especially, nonclassical) $x \in \scrE^0$ is known to be
trianguline by work of Kisin and Colmez.  Let $\bbQ_p(x)$ denote the
residue field of $\scrE^0$ at $x$, and $(f_x,\alpha_x)$ the
corresponding $\bbQ_p(x)$-valued overconvergent eigenform and
$U_p$-eigenvalue.  Then \cite[Theorem 6.3]{Ki} shows the existence of
a nonzero, $G_p$-equivariant map $V_{f_x}^\text{hom} \to (B_\crys^+
\otimes_\bbQp \bbQ_p(x))^{\vphi=\alpha_x}$.  This is equivalent to a
nonzero vector in
$\bfD_\crys(V_{f_x}^\text{coh}|_{G_p})^{\vphi=\alpha_x}$.  Then
\cite[Proposition 5.3]{C} implies that $V_{f_x}^\text{coh}$ is
trianguline at $p$, and hence so is $V_{f_x}^\text{hom}$.  The
trianguline subspace $F$ inside
$\bfD^\dag_\rig(V_{f_x}^\text{hom}|_{G_p})$ ought to coincide with the
putative triangulordinary filtration $F_x^1$ described above when
Conjecture \ref{conj-eigencurve} holds.  In any case, using $F$ in
place of $F_x^1$, we obtain a definition of a local condition and
Selmer group without assuming any conjecture.

We remind the reader that, although the work of Kisin and Colmez gives
us trianguline filtrations at every point, they do not directly give
us a filtration on the family.
\end{rem}

A related example is the cyclotomic deformation of $\scrV^0$.  This is
the bundle $\wt{\scrV}^0$ over the {\it eigensurface} $\wt{\scrE}^0 =
\scrE^0 \times \scrW$ determined by $p_1^* \scrV^0
\otimes_{\wt{\scrE}^0} p_2^* \scrT$, where the $p_i$ are the
projections of $\wt{\scrE}^0$ onto the respective factors.  Letting
$\wt{\scrE}^{0,\alg} = \scrE^{0,\alg} \times \scrW^\alg$, we see that
$\wt{\scrV}^0$ is pretriangulordinary of shape $\sigma = \{2 > 1 >
0\}$.  Assuming Conjecture \ref{conj-eigencurve} and setting $\wt{F}^1
= p_1^*F^1 \otimes_{\wt{\scrE}^0} p_2^*\scrT$, we see that
$\wt{\scrV}^0$ is also triangulordinary of shape $\sigma$.

\begin{rem}\label{rem-choosing-indices}
In the case of the eigencurve $\scrE^0$, every $\scrV_x|_{G_p}$ with
$x \in \scrE^{0,\alg}$ has Hodge--Tate weights $0$ and $k-1 > 0$.
Therefore, as seen in Equation \ref{eqn-filtration-x}, the
triangulordinary filtrations all have $F_x^1$ equal to their rank-$1$
constituent.  When assigning indices to the putative triangulordinary
flag $F$ on $\scrD^0$ given by Conjecture \ref{conj-eigencurve}, this
fact forces us to take $F^1$ to be the rank-$1$ constituent.  In other
words, the Galois theory provided us with a natural choice of
filtration indexing, and, by consequence, a natural choice of Selmer
group.

Consider the universal character $\scrT$ of $G_{\bbQ,S}$ over weight
space $\scrW$.  The unique Hodge--Tate weight of $w \in \scrW^\alg$ is
$k_w$, and these integers vary without bound.  Thus there is no ``most
appropriate'' index at which to situate the jump in the
triangulordinary filtration, compatibly over all of $\scrW^\alg$ as
defined in \S\ref{sect-review-eigencurve}.  Another viewpoint is that
$\scrT$ is the cyclotomic {\it deformation} of the trivial character
$\chi_\triv$, and hence its triangulordinary filtration should be
chosen to deform the natural one for $\chi_\triv$.  Since $\chi_\triv$
has Hodge--Tate weight $0$, this means taking $\Gr^0 \neq 0$, and, in
particular, $F^1 = 0$.  Another way of achieving this would be to
reduce $\scrW^\alg$ to its subset consisting of those $w$ with $k_w =
0$ (which is still Zariski dense).  A third option is to note that
$\scrT$ is also the cyclotomic deformation of the cyclotomic character
$\chi_\cycl$, which corresponds to replacing $\scrW^\alg$ with the
subset defined by $k_w = 1$, and which suggests taking $F^1 = \scrT$.
Thus, depending on the choice of $\scrW^\alg$, the most appropriate
indexing of the triangulordinary flag either does not exist, has
$F^1=0$, or has $F^1=\scrT$.  The latter two possibilities give two
different Selmer local conditions at $p$ (respectively, they are the
unramified and empty conditions).

The ambiguity described above passes on to $\wt{\scrV}^0$: at a point
$(x,w)$, where $x$ has weight $k_x$ and $w$ has weight $k_w$, the
Hodge--Tate weights of $\wt{\scrV}^0_{(x,w)}$ are $k_w$ and $k_w+k_x-1
> k_w$, which vary roughly independently; thus $\wt{\scrE}^{0,\alg}$
does not admit a most appropriate choice of indices.  Since we view
the eigensurface as a the cyclotomic deformation of the eigencurve, we
expect that considering $\scrT$ as the cyclotomic deformation of the
{\it trivial} character is most appropriate (in this particular
setting).  This means reducing $\wt{\scrE}^{0,\alg}$ to the subset
defined by $k_w = 0$, and taking for $\wt{F}^1$ the rank-$1$
constituent of the triangulordinary flag, which is given by $p_1^*F^1
\otimes_{\wt{\scrE}^0} p_2^*\scrT$.
\end{rem}

Since the reader is likely to be aware of the goals of Iwasawa theory,
we conclude by saying that we expect the Selmer group
$H^1_\tord(\bbQ,\scrE^0)$ (resp.\ $H^1_\tord(\bbQ,\wt{\scrE}^0)$) to
be related to the analytic standard $p$-adic $L$-function varying
along the eigencurve (resp.\ eigensurface).  But, the Selmer groups
being highly non-integral (and likely non-torsion), and the $p$-adic
$L$-functions being unbounded, the precise means by which these ought
to be related related is far from clear.



\bibliography{tord}

\end{document}